\newtheorem{theorem}[equation]{Theorem}%[section]
\newtheorem{lemma}[equation]{Lemma}
\newtheorem{proposition}[equation]{Proposition}
\theoremstyle{definition}
\newtheorem{definition}[equation]{Definition}
\newtheorem{assumption}[equation]{Assumption}
\theoremstyle{remark}
\newtheorem{remark}[equation]{Remark}
\newtheorem{example}[equation]{Example}
\numberwithin{equation}{section}
\newcommand{ \R }{ \mathbb{R} }
\newcommand{ \Rn }{ {\mathbb{R}^n} }
\newcommand{\Phiw}{{\Phi_{\mathrm{w}}}}
\newcommand{\Phic}{{\Phi_{\mathrm{c}}}}
\newcommand{\bphi}{{\bar\phi}}
\newcommand{\bA}{{\bar A}}
\newcommand{\bF}{{\bar F}}
\newcommand{\bu}{{\bar u}}
\newcommand{\bv}{{\bar v}}
\newcommand{\bL}{\bar L}
\newcommand{\tL}{\tilde L}
\newcommand{\intA}{{A^{(-1)}}}
\newcommand{\supp}{\operatorname{supp}}
\renewcommand{\div}{\mathrm{div}}
\newcommand{\loc}{\mathrm{loc}}
\renewcommand{\epsilon}{\varepsilon}
\renewcommand{\phi}{\varphi}
\renewcommand{\le}{\leqslant}
\renewcommand{\ge}{\geqslant}
\renewcommand{\leq}{\leqslant}
\renewcommand{\geq}{\geqslant}
\newcommand{\ainc}[1]{\hyperref[ainc]{{\normalfont(aInc){\ensuremath{_{#1}}}}}}
\newcommand{\adec}[1]{\hyperref[adec]{{\normalfont(aDec){\ensuremath{_{#1}}}}}}
\newcommand{\inc}[1]{\hyperref[inc]{{\normalfont(Inc){\ensuremath{_{#1}}}}}}
\newcommand{\dec}[1]{\hyperref[dec]{{\normalfont(Dec){\ensuremath{_{#1}}}}}}
\newcommand{\azero}{\hyperref[azero]{{\normalfont(A0)}}}
\newcommand{\aone}{\hyperref[aone]{{\normalfont(A1)}}}
\newcommand{\wVA}{\hyperref[wVA1]{{\normalfont(wVA1)}}}
\newcommand{\VA}{\hyperref[VA1]{{\normalfont(VA1)}}}
\newcommand{\aones}[1]{\hyperref[aones]{{\normalfont(A1-{\ensuremath{#1}})}}}
\begin{document}

\title[Regularity theory without Uhlenbeck structure]
{Regularity theory for non-autonomous partial differential equations without Uhlenbeck structure}

% Information for first author
\author{Peter H\"ast\"o}
 % Address of record for the research reported here

\address{Department of Mathematics and Statistics, FI-20014 University of Turku, Finland}
\email{peter.hasto@utu.fi}

\author{Jihoon Ok}
 % Address of record for the research reported here
\address{Department of Mathematics, Sogang University, Seoul 04107, Republic of Korea}
\email{jihoonok@sogang.ac.kr}

% \thanks will become a 1st page footnote.
\thanks{}

% General info
\subjclass[2010]{49N60; 35A15, 35B65, 35J62, 46E35}

%\date{\today}

%\dedicatory{This paper is dedicated to our advisors.}

\keywords{Maximal regularity, non-autonomous functional, quasi-isotropic, non-Uhlenbeck structure, 
variable exponent, double phase, non-standard growth, H\"older continuity, 
generalized Orlicz space, Musielak--Orlicz space}

\begin{abstract}
We establish maximal local regularity results of weak solutions or local minimizers of
\[
\div A(x, Du)=0
\quad\text{and}\quad
\min_u \int_\Omega F(x,Du)\,dx,
\]
providing new ellipticity and continuity assumptions on $A$ or $F$ with general $(p,q)$-growth. 
Optimal regularity theory for the above non-autonomous problems is a long-standing issue; 
the classical approach by Giaquinta and Giusti involves assuming that 
the nonlinearity $F$ satisfies a structure condition. This means that 
the growth and ellipticity conditions depend on a given special function, such as $t^p$, $\phi(t)$, $t^{p(x)}$, $t^p+a(x)t^q$, and  not only $F$ but also the given function is
assumed to satisfy suitable continuity conditions. Hence these regularity conditions depend on given special functions. 

In this paper we study the problem without recourse to special function structure and without 
assuming Uhlenbeck structure. 
We introduce a new ellipticity condition using $A$ or $F$ only, which entails that the function is 
quasi-isotropic, i.e.\ it may depend on the direction, but only up to a multiplicative constant. Moreover, we formulate the continuity condition on $A$ or $F$ without specific structure and without direct restriction on the ratio $\frac qp$ of the parameters from the $(p,q)$-growth condition. 
We establish local $C^{1,\alpha}$-regularity for some $\alpha\in(0,1)$ and $C^{\alpha}$-regularity for any $\alpha\in(0,1)$ of weak solutions and local minimizers.
Previously known, essentially optimal, regularity results are included as special cases.
\end{abstract}

%\marginpar{\small We dealt with minimizers with Uhlenbeck structure, $F(x,\xi)=F(x,|\xi|)$, in an earlier paper [\emph{Maximal regularity for local minimizers of non-autonomous functionals}, 
%J. Eur. Math. Soc., to appear].  In the present paper, we consider the general case.
%(We can consider including this when the paper is published. Jihoon: Ok)}

\maketitle

%%%%%%%%%%%%%%%%%%%%%%%%%%%%%%%%%%%%%%%%%%%%%%%%%%%%%%%%%%%%%%%%%%%%%%%%
%%%%%%%%%%%%%%%%%%%%%%%%%%%%%%%%%%%%%%%%%%%%%%%%%%%%%%%%%%%%%%%%%%%%%%%%
%%%%%%%%%%%%%%%%%%%%%%%%%%%%%%%%%%%%%%%%%%%%%%%%%%%%%%%%%%%%%%%%%%%%%%%%

\section{Introduction}\label{sect:intro}

Research on regularity of weak solutions or minimizers of the problems 
\[
\div A(x, Du)=0
\quad\text{and}\quad
\min_u \int_\Omega F(x,Du)\,dx
\]
is a major topic in the partial differential equations and the calculus of variations.
If there is no direct dependence on $x$ (i.e.,\ $A(x,\xi)\equiv A(\xi)$ and $F(x,\xi)\equiv F(\xi)$), 
these are called \textit{autonomous} problems. 
The simplest non-linear model cases is the $p$-power function
$F(\xi)=|\xi|^p$, $1<p<\infty$, and 
the corresponding Euler--Lagrange equation is the $p$-Laplace equation
where $A(Du)=|Du|^{p-2}Du$.
The maximal regularity of weak solutions of the $p$-Laplace equation is $C^{1,\alpha}$ for some $\alpha\in(0,1)$ depending only on $p$ and the dimension $n$ (e.g.,\ \cite{DiBe1,Eva3,Le1,Ura1}).

For \textit{non-autonomous} problems, there is also direct $x$-dependence. 
To tackle this case, Giaquinta and Giusti \cite{GiaG83,GiaG84} introduced the 
following $p$-type growth conditions:
\begin{equation}\label{p-growth}
\begin{cases}
\xi\mapsto F(x,\xi)\text{ is } C^2, \\
\nu |\xi|^p \le F(x,\xi)\le L(1+|\xi|^p), \\
\nu (\mu^2+|\xi|^2)^{\frac{p-2}{2}}|\lambda|^2 
\le D_\xi^2F(x,\xi)\lambda\cdot \lambda 
\le L (\mu^2+|\xi|^2)^{\frac{p-2}{2}} |\lambda|^2, \\
|F(x,\xi)-F(y,\xi)| \le \omega(|x-y|)(1+|\xi|^p). 
\end{cases} 
\end{equation}
Here, $F$ is related to the perturbed case $a(x)|\xi|^p$
and has the same $p$-type growth at all points. 
Lieberman \cite{Lie1} generalized the model by replacing $|\xi|^p$ 
with Orlicz growth $\phi(|\xi|)$. Marcellini \cite{Mar89} introduced non-standard, so-called  \textit{$(p,q)$-growth}
where the exponent $p$ on the right-hand 
side is replaced by $q>p$. In this situation, we need to assume that $\frac qp$ is close 
to $1$, see, e.g., \cite{BecMin20,BelSch20,DeFM21,Mar91}.
However, all these structure conditions fail to accommodate many kinds of energy functionals 
since the variability in the $x$- and $\xi$-direction are treated separately. 

For many years, 
it was thought that the only way to treat the $x$- and $\xi$-directions 
together was through special cases.
Consequently, a plethora of studies deal with the 
variable exponent case $F(x,\xi)=|\xi|^{p(x)}$.
Over the past half-dozen years the double phase functional $F(x,\xi)=|\xi|^p + a(x) |\xi|^q$, 
$1<p\leq q$ and $a\ge 0$, has attracted much attention. 
These models were first studied by Zhikov \cite{Zhi86, Zhi95} in the 1980's in relation to 
Lavrentiev's phenomenon and have been considered in thousands of papers \cite{Min06, Rad15}.
Moreover, various variants and borderline cases have been investigated, such as:
perturbed variable exponent $|\xi|^{p(x)} \log(e+|\xi|)$;
Orlicz variable exponent $\psi(|\xi|)^{p(x)}$ or $\psi(|\xi|^{p(x)})$;
degenerate double phase $|\xi|^p + a(x) |\xi|^p \log (e+|\xi|)$;
Orlicz double phase $\phi(|\xi|) + a(x) \psi(|\xi|)$;
triple phase $|\xi|^p + a(x) |\xi|^q + b(x) |\xi|^r$;
double variable exponent $|\xi|^{p(x)}+|\xi|^{q(x)}$; and
variable exponent double phase $|\xi|^{p(x)} + a(x) |\xi|^{q(x)}$. See \cite{HasO22} for 
references. We emphasize that all these special cases are covered by our results.

In \cite{HasO22}, we introduced a different approach which does not impose any direct restriction on 
$\frac qp$. However, we were only able to prove maximal regularity 
for local minimizers when $F(x,\xi)=F(x,|\xi|)$ has so-called \textit{Uhlenbeck structure}. In this 
article we extend the results to both minimizers and weak solutions and dispense with the 
Uhlenbeck restriction. 

\medskip

We collect some conditions for $A:\Omega\times\R^n\to \R^n$ and 
$F:\Omega\times\R^n\to \R$ with an open set $\Omega\subset\R^n$ ($n\ge 2$), which determine our equation and 
minimization energy, respectively. 
See Section~\ref{sect:preliminaries} for further definitions and notation, including 
%the other main assumption \wVA{}. 
the continuity assumption \wVA{} that is the other main assumption.

\begin{assumption}\label{ass:A}
We say that $A:\Omega\times\R^n\to \R^n$ satisfies Assumption~\ref{ass:A} if the following three conditions hold:
\begin{enumerate}\addtolength{\itemsep}{4pt}
\item%\label{hpzero} 
For every $x\in \Omega$, $A(x, 0)= 0$, $A(x,\cdot)\in C^{1}(\R^n\setminus\{ 0\}, \R^n)$
and for every $\xi\in \R^n$, $A(\cdot,\xi )$ is measurable.
\item%\label{hpone} 
There exist $L\ge 1$ and $1<p<q$ such that the radial function 
$t\mapsto | D_\xi A(x,te)|$ satisfies \azero{}, \ainc{p-2} and \adec{q-2} with the constant $L$,
for every $x\in \Omega$ and $e\in \Rn$ with $|e|=1$.  (The \textit{$(p,q)$-growth} condition) 
\item%\label{hptwo}
There exists $L\ge 1$ such that 
\[
| D_\xi A(x,\xi ')|
\le
L\, D_\xi A(x,\xi )e \cdot e 
\]
for all $x\in\Omega$ and $\xi ,\xi',e\in \Rn$ with $|\xi |=|\xi '|\neq 0$ and $|e|=1$. 
(The \textit{quasi-isotropic ellipticity} condition) 
\end{enumerate}
\end{assumption}

 The \azero{} condition in (2) means that a coefficient factor of $A$ is nondegenerate and nonsingular, for instance 
$a\approx 1$ when $A(x,\xi)=a(x) |\xi|^{p-2}\xi$. The \adec{q-2} and \ainc{p-2} conditions  in (2) are equivalent to the function $t\mapsto t^2| D_\xi A(x,te)|$ satisfying the $\Delta_2$- and $\nabla_2$-conditions, respectively. In particular, we note from (2) that $D_\xi A(x,\xi)\neq 0$ when $\xi\neq 0$.
Uhlenbeck structure has been replaced by (3), which is a quasi-isotropy condition 
since different directions behave the same up to a constant. It is known that 
completely anisotropic equations do not necessarily have any regularity as 
solutions may even be locally unbounded \cite{Gia87}. 
We also note that if $A(x,\xi)=D_\xi F(x,\xi)$  the condition (3) means that the Hessian matrix $D_\xi^2 F(x,\xi)$ with $\xi\neq0$ is positive definite and all its eigenvalues on each sphere for $\xi$ are comparable uniformly in $x$ and the radii of spheres, that is,
\[
1\le \frac{\sup\{\text{eigenvalues of }D_\xi^2 F(x,te) : |e|=1\}}
{\inf\{\text{eigenvalues of }D_\xi^2 F(x,te) : |e|=1\}} \le \tilde L \quad \text{for each }\ x\in\Omega\ \text{ and }\ t>0,
\]
where $\tilde L$ depends only on $L$ and $n$. Compare this to the $p$-growth condition in \eqref{p-growth}, 
which implies a stronger condition, where $x$ is inside the supremum and infimum: 
\[
1\le \frac{\sup\{\text{eigenvalues of }D_\xi^2 F(x,te) : x\in\Omega,\, |e|=1\}}
{\inf\{\text{eigenvalues of }D_\xi^2 F(x,te) : x\in\Omega,\ |e|=1\}} \le \tilde L \quad \text{for each }\  t>0.
\]
We further refer to \cite{DeFM21,DeFM22} for a discussion of non-uniformly elliptic conditions in terms of ratios of eigenvalues (in particular \cite[Section~4.6]{DeFM21} and \cite[Section~1]{DeFM22}) and related regularity results (see also Remark~\ref{rem:pqcondition}).

With these assumptions we consider the following quasilinear elliptic equation in divergence form: 
\begin{equation}\label{mainPDE}
\tag{{\(\div A\)}}
\div A(x,Du) = 0 \quad\text{in }\ \Omega.
\end{equation}
We say that $u\in W^{1,1}_{\loc}(\Omega)$  is a local \textit{weak solution} if $|Du|\,|A(\cdot,Du)|\in L^1_{\loc}(\Omega)$ and
\begin{equation*}%\label{weakform}
 \int_{\Omega} A(x,Du)\cdot D\zeta \,dx = 0\quad \text{for all }\ \zeta\in C^\infty_0(\Omega). 
\end{equation*}
We show that such solutions are quasiminimizers of a related functional with Uhlenbeck structure; 
thus $C^{0,\alpha}$-regularity for some $\alpha\in(0,1)$ and higher integrability 
follow from the results in \cite{HarHL21,HarHK18}, see Theorem~\ref{thm:holder}. 
Let us state the first main maximal regularity theorem, for general weak solutions.

\begin{theorem}[Maximal regularity for solutions]\label{thm:PDE}
Let $A:\Omega\times \Rn \to \Rn$ satisfy Assumption~\ref{ass:A} 
%and set $\intA(x,\xi ):=|\xi |\,A(x,\xi )$. 
and let $u\in W^{1,1}_{\loc}(\Omega)$ be a local weak solution to \eqref{mainPDE}. 
Define $A^{(-1)}(x,\xi):=|\xi|A(x,\xi)$. 
\begin{enumerate}
\item 
If $A^{(-1)}$ satisfies \wVA{}, then $u\in C^{0,\alpha}_{\loc}(\Omega)$ for every $\alpha\in(0,1)$.
\item 
If $A^{(-1)}$ satisfies \wVA{} with $\omega_\epsilon(r)\le r^{\beta_\epsilon}$ 
for some $\beta_\epsilon>0$, then $u\in C^{1,\alpha}_{\loc}(\Omega)$ for some $\alpha=\alpha(n,p,q,L,
\bL,\beta_\epsilon)\in(0,1)$.
\end{enumerate}
\end{theorem}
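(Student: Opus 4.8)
The plan is to reduce the non-autonomous equation \eqref{mainPDE} to the setting where the machinery of \cite{HasO22} and the perturbation techniques for generalized Orlicz growth apply. The first step is to associate to $A$ a scalar ``primitive'' energy density. Because $t\mapsto |D_\xi A(x,te)|$ satisfies \azero{}, \ainc{p-2} and \adec{q-2}, one builds $\varphi(x,t):=\int_0^t s\,\inf_{|e|=1}(D_\xi A(x,se)e\cdot e)\,ds$ (or a suitable variant), obtaining a generalized $\Phi$-function satisfying \inc{p} and \dec{q}, and shows via the quasi-isotropic ellipticity condition (3) that $A(x,\xi)\cdot\xi\approx\varphi(x,|\xi|)$ and $|A(x,\xi)|\approx\varphi'(x,|\xi|)$ with constants depending only on $n$ and $L$. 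Then any local weak solution $u$ of $\div A(x,Du)=0$ is a $Q$-quasiminimizer of $\int\varphi(x,Du)\,dx$ for some $Q=Q(n,L)$, which, together with the continuity of $\varphi$ inherited from \wVA{} for $A^{(-1)}$, lets us invoke Theorem~\ref{thm:holder}: $u\in C^{0,\beta}_{\loc}$ for some $\beta$ and $Du$ enjoys higher integrability. This qualitative information — in particular $u\in L^\infty_{\loc}$, so we may freely truncate $\varphi$ to behave like $t^p$ for large $t$ on the relevant scale — is what makes the quantitative iteration below run.

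For part (1) the strategy is a Calderón--Zygmund / comparison argument on small balls $B_r$. Fix $x_0$ and freeze the coefficient: let $\bar A(\xi):=$ a suitable average of $A(\cdot,\xi)$ over $B_r$ (chosen using \wVA{} so that $A(x,\xi)$ is close to $\bar A(\xi)$ in the $L^1$-averaged sense that \wVA{} provides, after normalizing by the size of $Du$ on $B_r$). Let $v$ solve the frozen autonomous equation $\div\bar A(Dv)=0$ in $B_r$ with $v=u$ on $\partial B_r$; since $\bar A$ has Uhlenbeck-type structure with $(p,q)$-growth, $v$ satisfies the De Giorgi-type oscillation decay $\operatorname{osc}_{B_{\rho}}v\lesssim(\rho/r)^{\gamma}\operatorname{osc}_{B_r}v$ for some $\gamma\in(0,1)$ and, more to the point, the stronger decay needed to reach \emph{every} $\alpha<1$ once the frozen problem is known to be regular. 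The comparison estimate $\int_{B_r}\varphi(x,|Du-Dv|)\,dx\le$ (error from \wVA{}) is obtained by testing the difference of the two equations with $u-v$ and using the monotonicity of $\bar A$ together with the quasi-isotropy bound. Feeding the comparison error — which by \wVA{} can be made an arbitrarily small power of $r$ at the cost of shrinking $\epsilon$ — into the decay for $v$ and iterating over dyadic scales yields, by a standard Morrey-type lemma, $u\in C^{0,\alpha}_{\loc}$ for every $\alpha\in(0,1)$.

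For part (2) the scheme is the same but at the level of the gradient: one proves an excess-decay estimate for $E(B_r):=\fint_{B_r}|Du-(Du)_{B_r}|^2$ (or its $\varphi$-normalized analogue). Here the frozen solution $v$ of $\div\bar A(Dv)=0$ is $C^{1,\alpha_0}_{\loc}$ for some $\alpha_0=\alpha_0(n,p,q,L)$ by Uhlenbeck-structure regularity (this is where the results behind Theorem~\ref{thm:PDE} for the autonomous case, or \cite{HasO22}-type arguments, enter), giving $E_v(B_\rho)\lesssim(\rho/r)^{2\alpha_0}E_v(B_r)$. The comparison term is now controlled using the polynomial modulus $\omega_\epsilon(r)\le r^{\beta_\epsilon}$: this converts the \wVA{}-error into a genuine power $r^{2\beta}$, and a standard iteration (Campanato-type) with exponent $\alpha=\min\{\alpha_0,\beta\}$ up to a loss gives $Du\in C^{0,\alpha}_{\loc}$, hence $u\in C^{1,\alpha}_{\loc}$. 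The main obstacle — and the reason the Uhlenbeck reduction in the first paragraph is essential — is that \emph{a priori} $A$ is only quasi-isotropic, not Uhlenbeck, so none of the classical autonomous regularity (oscillation decay, gradient Hölder bounds, higher integrability of $Dv$) is directly available for $A$ itself; one must do all comparisons against the scalar $\varphi$ and its associated Uhlenbeck operator and carefully track that every constant depends only on $n,p,q,L$ (and $\bL,\beta_\epsilon$ in part (2)), never on the frozen point. A secondary technical point is matching the normalization in \wVA{} (which compares $A^{(-1)}$, i.e. $|\xi|A(x,\xi)$) with the natural energy-level normalization by $\fint_{B_r}\varphi(x,|Du|)\,dx$ in the comparison estimates; this dictates the precise form of the averaging used to define $\bar A$.
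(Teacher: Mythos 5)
There is a genuine gap at the heart of your comparison step. You propose to freeze the coefficient into an operator $\bar A$ and then use ``Uhlenbeck-type structure'' of the frozen problem (or, in your closing paragraph, to ``do all comparisons against the scalar $\varphi$ and its associated Uhlenbeck operator''). Neither version works. The frozen operator $A(x_0,\cdot)$ (or any $x$-average of $A(\cdot,\xi)$) is not of Uhlenbeck form: quasi-isotropy (Assumption~\ref{ass:A}(3)) only says that $A(x_0,\xi)$ is comparable to the radial operator $\varphi'(x_0,|\xi|)\frac{\xi}{|\xi|}$ up to the fixed constant $L$, not that they are close. Consequently, if you compare $u$ with the solution $v$ of the Uhlenbeck equation attached to $\varphi$, the error term obtained by testing the difference of the equations with $u-v$ is of size $\sim L$ uniformly in $r$; it does not vanish as $r\to0$, and no iteration can absorb it. The only admissible reference problem is the autonomous but still non-Uhlenbeck equation \eqref{eqA0} with $\bA$ built from $A(x_0,\cdot)$ itself, and then one needs $C^{1,\bar\alpha}$ interior estimates and Calder\'on--Zygmund-type estimates for this quasi-isotropic autonomous operator (the paper's Lemma~\ref{lem:holder} and Lemma~\ref{lem:CZ}). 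These do not follow from ``Uhlenbeck-structure regularity''; establishing them for general $\bA$ is a substantive ingredient that your proposal assumes away, and it is precisely what allows the decay rates in your parts (1) and (2) (in particular, ``every $\alpha<1$'' in part (1) comes from the Lipschitz bound $\sup_{B_{\rho/2}}|D\bar u|\lesssim\fint_{B_\rho}|D\bar u|$ for the frozen solution, not from a De Giorgi oscillation decay, which only yields one fixed exponent).

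A second, related omission: you cannot take $\bA(\xi)=A(x_0,\xi)$ globally in $\xi$, because \wVA{} for $A^{(-1)}$ only controls $|A(x,\xi)-A(x_0,\xi)|$ on the range where $|A^{(-1)}(x_0,\xi)|\lesssim|B_r|^{-1+\epsilon}$, and the ellipticity/growth of $A(x_0,\cdot)$ near $\xi=0$ and for large $\xi$ must be replaced by clean $p$-growth to run the autonomous theory uniformly. The paper handles this by splicing $A(x_0,\cdot)$ with $p$-Laplacian-type tails via transition functions $\eta_i$ at levels $t_1,t_2$ determined by $\omega(r)$ and $|B_r|^{-1}$ (see \eqref{A0def}), and the large-gradient region is then controlled by the higher integrability exponent $\sigma$ (the $E_2$ estimate in Lemma~\ref{lem:comparison}), not by $u\in L^\infty_{\loc}$ as you suggest (boundedness of $u$ gives no pointwise control on $Du$). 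Your first paragraph (growth function, quasiminimizer reduction, Theorem~\ref{thm:holder}) and the overall freeze-compare-iterate scheme do match the paper, but without the non-Uhlenbeck autonomous regularity theory and the truncated construction of $\bA$, the comparison estimate that drives both parts cannot be closed.
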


\begin{remark} 
The  continuity condition \wVA{} for $\Phi$-functions was introduced in \cite{HasO22}, see also Section~\ref{subsect:newcondition}. In our knowledge, the above theorem covers all previous known $C^{1,\alpha}$-regularity results for equation \eqref{mainPDE} with $\alpha$ independent of the solution. 
Some examples of functionals satisfying Assumptions~\ref{ass:A} or \ref{ass:F} are 
presented in Section~\ref{sect:example}.
\end{remark}

\begin{remark}
In \wVA{}, $\epsilon>0$ is arbitrary and it is possible 
that $\beta_\epsilon\to 0$ as $\epsilon \to 0$. Moreover, the constant $\bL=\bL_K$ is from \wVA{} and $K>0$ is  arbitrary.
However, for given structure we actually choose certain, positive $\epsilon$ and $K$
(see Section~\ref{sect:regularity}, in particular, \eqref{Kchoice} and \eqref{epsilon}), and $\alpha$ in the previous theorem depends on 
this $\beta_\epsilon$ and $\bL=\bL_K$. The same applies in Theorem~\ref{thm:functional}.
\end{remark}

\smallskip

If equation \eqref{mainPDE} is an Euler--Lagrange equation, that is, if $A=D_\xi F$ 
for some a function $F:\Omega\times \R^n\to\R$, then we can consider milder regularity assumptions 
in the context of variational calculus.

\begin{assumption}\label{ass:F}
We say that $F:\Omega\times\R^n\to [0,\infty)$ satisfies Assumption~\ref{ass:F} if the following two conditions hold:
\begin{enumerate}\addtolength{\itemsep}{4pt}
\item%\label{hfzero} 
For every $x\in \Omega$, $F(x, 0)=|D_\xi F(x,0)|= 0$, 
$F(x,\cdot)\in C^{1}(\R^n)\cap C^{2}(\R^n\setminus\{ 0\})$
and for every $\xi\in \R^n$, $F(\cdot,\xi)$ is measurable.
\item
The derivative $A:=D_\xi F$ satisfies conditions (2) and (3) of Assumption~\ref{ass:A}.
%\item%\label{hfone}
%There exist constants $L\ge 1$ and $1<p<q$ such that 
%$t\mapsto |D_\xi^2 F(x,te)|$ satisfies \azero{}, \ainc{p-2} and \adec{q-2} 
%for every $x,e\in \Rn$ with $|e|=1$. 
%\item%\label{hftwo} 
%The inequality
%\[
%|D_\xi^2 F(x,\xi')|
%\le
%L\, D_\xi^2 F(x,\xi)e \cdot e 
%\]
%with constant $L\ge1$ holds for all $\xi,\xi',e\in \Rn$ with $|\xi|=|\xi'|$ and $|e|=1$. 
\end{enumerate}
\end{assumption}

From this assumption it follows that $F(x,\xi)>0$ for all $x\in \Omega$ and 
$\xi\in \Rn\setminus\{0\}$, see \eqref{phifequiv} below.
%We say that 
%$u\in W^{1,F}_{\loc}(\Omega)$ 
%is a \emph{local minimizer} if 
%%$\int_{\Omega'} F(x,Du)\, dx<\infty$ for every $\Omega'\Subset\Omega$, and
%\begin{equation}\label{mainfunctional}
%\tag{{\(\min F\)}}
%\int_{\Omega'} F(x,Du)\, dx \le \int_{\Omega'} F(x,Dv)\, dx
%\quad\text{for every }v\in u+W^{1,F}_0(\Omega') \text{ and } \Omega'\Subset\Omega. 
%\end{equation}
We say that 
$u\in W^{1,1}_{\loc}(\Omega)$ 
is a \emph{local minimizer} if $F(\cdot,Du)\in L^1_{\loc}(\Omega)$ and
\begin{equation}\label{mainfunctional}
\tag{{\(\min F\)}}
\int_{\supp(u-v)} F(x,Du)\, dx \le \int_{\supp(u-v)} F(x,Dv)\, dx
\end{equation}
for every $v\in W^{1,1}_{\loc}(\Omega)$ with $\supp(u-v)\Subset\Omega$.

\begin{theorem}[Maximal regularity for minimizers]\label{thm:functional}
Let $F:\Omega\times \Rn \to [0,\infty)$ satisfy Assumption~\ref{ass:F} 
and let $u\in W^{1,1}_{\loc}(\Omega)$ be a local minimizer of \eqref{mainfunctional}. 
\begin{itemize}
\item[(1)] If $F$ satisfies \wVA{}, then $u\in C^{0,\alpha}_{\loc}(\Omega)$ for every $\alpha\in(0,1)$.
\item[(2)]  If $F$ satisfies \wVA{} with $\omega(r)\le r^{\beta_\epsilon}$ for some 
$\beta_\epsilon>0$, then $u\in C^{1,\alpha}_{\loc}(\Omega)$ for some 
$\alpha=\alpha(n,p,q,L,\bL,\beta_\epsilon)\in(0,1)$.
\end{itemize}
\end{theorem}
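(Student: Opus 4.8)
The plan is to transplant the Giaquinta--Giusti freezing method to the generalized Orlicz setting, carrying out \emph{every comparison at the level of energies}, so that the continuity input is the assumption on $F$ and never the one on $\intA$. First I record the structural consequences of Assumption~\ref{ass:F}: with $A:=D_\xi F$, condition~(1) of Assumption~\ref{ass:F} gives condition~(1) of Assumption~\ref{ass:A} for $A$, while (2) and (3) are assumed outright, so $A$ satisfies Assumption~\ref{ass:A}. Integrating the quasi-isotropy condition~(3) twice from the origin (using $F(x,0)=|D_\xi F(x,0)|=0$) yields $F(x,\xi)\approx F(x,\xi')$ whenever $|\xi|=|\xi'|$, with constant depending only on $n$ and $L$; hence $F(x,\xi)\approx\phi_F(x,|\xi|)$ for the radial profile $\phi_F(x,t):=F(x,te_1)$, and, since $t\mapsto t^2|D_\xi A(x,te)|$ satisfies \azero{}, \ainc{p} and \adec{q} (here $p>1$ is used), $\phi_F$ is, up to comparability, a generalized $\Phi$-function with \azero{}, \ainc{p} and \adec{q}. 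Consequently a local minimizer of \eqref{mainfunctional} is a local quasiminimizer of $v\mapsto\int_\Omega\phi_F(x,|Dv|)\,dx$, and Theorem~\ref{thm:holder} (that is, \cite{HarHL21,HarHK18}) supplies the starting information: $u\in C^{0,\alpha_0}_{\loc}(\Omega)$ for some $\alpha_0\in(0,1)$ and $\phi_F(\cdot,|Du|)\in L^{1+\delta}_{\loc}(\Omega)$ for some $\delta>0$.

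\emph{Proof of (1).} Fix $B_r=B_r(x_0)\Subset\Omega$ and let $v$ be the minimizer, in the natural Orlicz--Sobolev class with $v=u$ on $\partial B_r$, of the frozen functional $w\mapsto\int_{B_r}F(x_0,Dw)\,dx$. Its integrand is Uhlenbeck-comparable with constants depending only on $n,p,q,L$, so $v$ obeys the autonomous a priori estimates (local Lipschitz and energy/Caccioppoli bounds). Testing minimality of $u$ by $v$ and of $v$ by $u$, and using that the first variation of the frozen functional at $v$ vanishes together with the pointwise ellipticity from~(3), gives
\[
\int_{B_r}\big|V_{x_0}(Du)-V_{x_0}(Dv)\big|^2\,dx
\;\lesssim\;
\int_{B_r}\big|F(x,Du)-F(x_0,Du)\big|\,dx+\int_{B_r}\big|F(x,Dv)-F(x_0,Dv)\big|\,dx,
\]
where $V_{x_0}$ is the usual nonlinear quantity associated with $\phi_F(x_0,\cdot)$. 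The two error integrals are estimated with \wVA{} for $F$: on the range where it applies the $x$-oscillation of $F$ costs a factor $\epsilon+\omega_\epsilon(r)^{\sigma}$ times the energy, and the complementary large-gradient range is absorbed by the higher integrability of $\phi_F(\cdot,|Du|)$ and the energy bound for $v$. Combined with the decay of $\rho\mapsto\int_{B_\rho}\phi_F(x_0,|Dv|)\,dx$ this yields an excess-decay inequality $\Psi(\rho)\lesssim\big[(\rho/r)^{n}+\epsilon+\omega_\epsilon(r)^{\sigma}\big]\Psi(r)+\text{l.o.t.}$ for $\Psi(\rho):=\int_{B_\rho}\phi_F(x,|Du|)\,dx$; choosing first $\epsilon$ and then $r$ small and iterating gives $\Psi(\rho)\lesssim\rho^{\,n-\theta}$ for every $\theta>0$, whence $u\in C^{0,\alpha}_{\loc}(\Omega)$ for every $\alpha\in(0,1)$ by the Morrey-type embedding adapted to the growth of $\phi_F$.

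\emph{Proof of (2).} With the polynomial modulus $r\mapsto r^{\beta_\epsilon}$ one improves the higher-integrability exponent: the self-improvement/reverse-Hölder argument now closes with $\phi_F(\cdot,|Du|)\in L^{1+\delta'}_{\loc}$, and $\delta'$ can be taken large enough (in terms of $\beta_\epsilon$, \emph{not} of $q/p$) to force $|Du|\in L^{q}_{\loc}(\Omega)$. This makes the first variation of $\int F(x,Dw)\,dx$ admissible, so $u$ is also a local weak solution of \eqref{mainPDE} with the $C^{0,\alpha}$-regularity from part~(1). From here one runs the gradient-regularity scheme of the proof of Theorem~\ref{thm:PDE}(2): a Caccioppoli inequality for $V(Du)$, comparison on small balls with the solution $v$ of the frozen autonomous equation $\div A(x_0,Dv)=0$ (which is $C^{1,\alpha_1}_{\loc}$ by the Uhlenbeck theory), and an excess-decay iteration. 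The only modification is that, since $u$ minimizes, the $x$-dependence at the freezing step is absorbed through the energy comparison of part~(1), so the error terms are governed by \wVA{} for $F$ with its polynomial modulus; the iteration then closes and yields $Du\in C^{0,\alpha}_{\loc}(\Omega)$ for some $\alpha=\alpha(n,p,q,L,\bL,\beta_\epsilon)\in(0,1)$.

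\emph{Main obstacle.} The substance is part~(2), for two reasons. First, because no bound on $q/p$ is imposed, the step $|Du|\in L^{q}_{\loc}$—and hence the very validity of the Euler--Lagrange equation—is not automatic and must be squeezed out of the polynomial continuity of $F$; this is precisely what replaces the classical ``$q/p$ close to $1$'' hypothesis. Second, one must keep track that the freezing error in the excess-decay argument is controlled by the modulus of $F$ and not by that of $\intA=|\xi|A(x,\xi)$: for genuine $(p,q)$-growth these are different conditions, since passing from $F$ to $\intA$ inflates the comparability constant by a factor of order $q/p$ and hence does not preserve the vanishing nature of \wVA{}. This is exactly why Theorem~\ref{thm:functional} does not follow from Theorem~\ref{thm:PDE} as a corollary and has to be established in parallel, even though the two arguments share the autonomous-comparison core.
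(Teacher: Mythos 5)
Your overall architecture (quasiminimizer step for the starting Hölder/higher integrability, comparison with an autonomous problem, excess-decay iteration, and using minimality rather than an equation to absorb the $x$-dependence) matches the paper's strategy, but there is a genuine gap at the freezing step, and it is exactly where the paper's main technical novelty lives. You compare $u$ with the minimizer $v$ of the \emph{frozen} functional $w\mapsto\int_{B_r}F(x_0,Dw)\,dx$ (and in part (2) with the frozen equation $\div A(x_0,Dv)=0$). Under \wVA{} alone this cannot be closed: the condition controls $|F(x,\xi)-F(y,\xi)|$ only when $F^-_{B_r}(\xi)\le K|B_r|^{-1+\epsilon}$, and above that threshold there is \emph{no} control of the $x$-oscillation whatsoever, so $F(x_0,\cdot)$ and $F(x,\cdot)$ need not be comparable for large $|\xi|$ (think of double phase at the borderline $\frac qp=1+\frac\beta n$ with $a(x_0)=\|a\|_{L^\infty(B_r)}$ and $a(y)=0$ in the same ball). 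Two concrete consequences: (i) $u$ need not belong to $W^{1,\phi(x_0,\cdot)}(B_r)$, since one only knows $\phi(\cdot,|Du|)\in L^{1+\sigma}$ with a small structural $\sigma$, so the frozen comparison map need not even have finite frozen energy controlled by the data; (ii) your estimate of the error term $\int_{B_r}|F(x,Dv)-F(x_0,Dv)|\,dx$ on the large-gradient set requires higher integrability of $\phi(\cdot,|Dv|)$ \emph{measured by the original $x$-dependent integrand}, which must be transferred from $u$ via a Calder\'on--Zygmund-type estimate as in Lemma~\ref{lem:CZ}; that transfer relies on $\theta_0(x,t)=\phi(x,\bphi^{-1}(t))$ having uniform \adec{q_1/p} (Proposition~\ref{prop:phi0}(4)), which in turn uses the comparability $\phi^+_{B_r}\le\tilde L\,\phi^-_{B_r}$ that Proposition~\ref{prop:fphiVA}(2) provides only on the window $[t_1,t_2]$. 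This is precisely why the paper does not freeze $F$ but constructs the truncated autonomous approximation $\bF$ in \eqref{F0def} (with growth function $\bphi$ from \eqref{phi0} and the transition functions $\eta_i$), which agrees with $F(x_0,\cdot)$ only for $2t_1\le|\xi|\le\tfrac{t_2}2$ and has $p$-growth outside, ensuring $\bphi\lesssim\phi(x,\cdot)$ on $[t_1,\infty)$ uniformly in $x\in B_r$ (Proposition~\ref{prop:phi0}(3)), hence admissibility of $u$ as boundary datum, applicability of Lemma~\ref{lem:CZ}, and the $E_1/E_2$ splitting (tied to the choice \eqref{epsilon} of $\epsilon$ against $\sigma$) in the comparison Lemma~\ref{lem:comparisonfunctional}. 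Without this construction the excess-decay inequality you write down is not established.

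Two further points in your part (2). The step ``force $|Du|\in L^q_{\loc}$ so that the Euler--Lagrange equation is admissible'' is both unnecessary and unsubstantiated: under Assumption~\ref{ass:F} the derivative satisfies $|D_\xi F(x,\xi)|\,|\xi|\approx\phi(x,|\xi|)\approx F(x,\xi)$, so $A(x,Du)\cdot D\zeta$ is locally integrable by Young's inequality whenever $F(\cdot,Du)\in L^1_{\loc}$, and the paper records that minimizers and weak solutions coincide; there is no Lavrentiev-type gap to bridge. Conversely, the claim that a polynomial modulus lets you push the Gehring exponent $\delta'$ up ``in terms of $\beta_\epsilon$, not of $q/p$'' is false in general: self-improvement of reverse H\"older inequalities yields only a small exponent $\sigma=\sigma(n,p,q,L,\bL_1)$, which a better modulus of continuity does not inflate. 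Finally, your remark that the error must be governed by the modulus of $F$ rather than of $\intA$ is correct in spirit (this is why the paper proves Lemma~\ref{lem:comparisonfunctional} using minimality directly instead of citing Theorem~\ref{thm:PDE}), but your stated reason (a constant inflation of order $q/p$ when passing from $F$ to $\intA$) is not the actual obstruction; the point is that oscillation control of $F$ in $x$ does not yield pointwise oscillation control of its $\xi$-derivative, cf.\ Proposition~\ref{prop:hphf} which only goes in the opposite direction.
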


Under our differentiability assumptions on $F$, $u\in W^{1,1}_{\loc}(\Omega)$ 
is a local weak solution to \eqref{mainPDE} with $A= D_\xi F$
if and only if it is a local minimizer of \eqref{mainfunctional}.  
Since not every mapping $A$ is of the form $ D_\xi F$, Theorem~\ref{thm:PDE} is more 
general in terms of structure. On the other hand,  
\wVA{} with $G(x,\xi)=|\xi| D_\xi F(x,\xi)$ implies \wVA{} with $G(x,\xi)= F(x,\xi)$,
but not the other way around (cf.\ Proposition~\ref{prop:hphf}), 
so in that sense the assumption of Theorem~\ref{thm:functional} is weaker.

\begin{remark}\label{rem:pqcondition}
De Filippis and Mingione \cite{DeFM22} study H\"older regularity of the gradient of minimizers of 
non-autonomous, $(p, q)$-growth functionals with an upper bound of $\frac qp$. Their condition 
is not covered by the condition in the above theorem, but their H\"older exponent 
may depend on minimizers.
\end{remark}

Let us briefly outline the rest of the paper and point out the main novelties. We first 
collect some background information in next section. In Section~\ref{subsect:newcondition}, 
we introduce our new conditions including \wVA{} that have been adapted to the 
non-Uhlenbeck case. Formulating appropriate conditions and unifying them to cover all 
the cases was the first challenge that we faced. 
In Section~\ref{sect:constructionphi}, we construct 
for $A$ or $F$ an approximating function $\phi$ with Uhlenbeck structure (e.g., $F(x,\xi)\approx \phi(x,|\xi|)$ in the functional case) that we call a \textit{growth function}. The function 
$\phi$ is similar to the one used in \cite{HasO22} which allows us to use some earlier results. 
However, $\phi$ does not have the same continuity property \wVA{} as $A$ or $F$. 

In Section~\ref{sect:auxiliary}, we consider regularity results in 
two simpler cases that are used as building 
blocks later on. Specifically, we show that our weak solution or minimizer is also a quasiminimizer 
of a non-autonomous functional with Uhlenbeck structure  
and we study 
related $\bA$-equations or $\bF$-energy functionals which are autonomous but lack Uhlenbeck structure. 
The main difficulty for 
us was constructing an appropriate approximating autonomous problem with autonomous functions $\bA$ or $\bF$ from $A$ or $F$ and a growth function $\bphi$. This is solved in 
Section~\ref{sect:approx}. 
For the function $\phi$, we use the same approximation $\bphi$
as in \cite{HasO22}. However, for $A$ and $F$ we need a novel approach of transitioning 
smoothly between different growth using appropriately chosen functions $\eta_i$. 
With these elements in place, we prove the main results in Section~\ref{sect:regularity} 
using comparison and iteration arguments. In the final section, we apply the main result 
in the special cases of variable exponent- and double phase-type growth.

%%%%%%%%%%%%%%%%%%%%%%%%%%%%%%%%%%%%%%%%%%%%%%%%%%%%%%%%%%%%
%%%%%%%%%%%%%%%%%%%%%%%%%%%%%%%%%%%%%%%%%%%%%%%%%%%%%%%%%%%%
%%%%%%%%%%%%%%%%%%%%%%%%%%%%%%%%%%%%%%%%%%%%%%%%%%%%%%%%%%%%
\section{Preliminaries}\label{sect:preliminaries}

%%%%%%%%%%%%%%%%%%%%%%%%%%%%%%%%%%%%%%%%%%%%%%%%%%%%%%%%%%%%
\subsection{Notation and definitions}
Throughout the paper we always assume that $\Omega$ is a bounded domain in $\Rn$. Let $x_0\in \Rn$ and $r>0$. Then $B_r(x_0)$ is the standard open ball in $\Rn$ centered at $x_0$ with radius $r$. If its center is clear, we simply write $B_r=B_r(x_0)$.
A function $f:[0,\infty)\to [0,\infty)$ is \textit{almost increasing} or 
\textit{almost decreasing} if there exists $L\ge 1$ such that $f(t)\leq Lf(s)$ or $f(s)\leq Lf(t)$, respectively, for all $0<t<s<\infty$. In particular, if $L=1$, we say $f$ is 
\textit{increasing} or \textit{decreasing}.
 For an integrable function $f$ in $U\subset \Rn$, we define $\fint_U f\, dx := \frac{1}{|U|}\int_U f \,dx$ as the average of $f$ in $U$ in the integral sense.
For functions $f,g:U\to \R$, 
$f\lesssim g$ or $f\approx g$ (in $U$) mean that there exists $C\geq 1$ such that $f(y)\leq C g(y)$ or $C^{-1} f(y)\leq g(y)\leq C f(y)$, respectively, for all $y\in U$. 
By $D=D_x$ we denote the derivative with respect to the space-variable $x$.

For $\phi:\Omega\times [0,\infty) \to[0,\infty)$ and $B_r\subset \Rn$, we write
\[
\phi^+_{B_r}(z):=\sup_{x\in B_r\cap \Omega}\phi(x,z)
\quad \text{and}\quad
\phi^-_{B_r}(z):=\inf_{x\in B_r\cap \Omega}\phi(x,z).
\]
The same idea and notation is also used for $F:\Omega\times \R^N\to [0,\infty]$. 
If the map $t\mapsto\phi(x,t)$, $t\ge 0$, is increasing for every $x\in\Omega$, then the (left-continuous) 
inverse function with respect to $t$ is defined by
\[
\phi^{-1}(x,t):= \inf\{\tau\geq 0: \phi(x,\tau)\geq t\}. 
\] 
If $\phi$ is strictly increasing and continuous in $t$, then this is just the normal inverse function.

\begin{definition} \label{def:ainc}
We define some conditions for $\phi:\Omega\times[0,\infty)\to [0,\infty)$ and $\gamma\in\R$ related to regularity with 
respect to the second variable, which are supposed to hold for all $x\in \Omega$ and 
a constant $L\ge 1$ independent of $x$.  
\vspace{0.2cm}
\begin{itemize}
\item[\normalfont(aInc)$_\gamma$]\label{ainc} 
$t\mapsto \phi(x,t)/t^\gamma$ is almost increasing on $(0,\infty)$ with constant $L\geq 1$.
\vspace{0.2cm}
\item[\normalfont(Inc)$_\gamma$]\label{inc} 
$t\mapsto \phi(x,t)/t^\gamma$ is increasing on $(0,\infty)$. 
\vspace{0.2cm}
\item[\normalfont(aDec)$_\gamma$]\label{adec} 
$t\mapsto \phi(x,t)/t^\gamma$ is almost decreasing on $(0,\infty)$ with constant $L\geq 1$.
\vspace{0.2cm}
\item[\normalfont(Dec)$_\gamma$]\label{dec} 
$t\mapsto \phi(x,t)/t^\gamma$ is decreasing on $(0,\infty)$.
\vspace{0.2cm}
\item[\normalfont(A0)] \label{azero} $L^{-1}\leq \phi(x,1)\leq L$.
\end{itemize}
\end{definition}

Note that this version of \azero{} is slightly stronger than the one used in \cite{HarH19}, 
but they are equivalent under the doubling assumption \adec{}, which means that \adec{q} holds 
for some $q\ge 1$. 
We may rewrite \ainc{p} or \adec{q} ($p,q>0$) with constant $L\geq 1$, as 
\[
\phi(x,ct)\le Lc^p\phi(x,t)\quad\text{and}\quad L^{-1}C^p \phi(x,t)\le \phi(x,Ct) 
\]
\[
\text{or}\quad L^{-1}c^q \phi(x,t) \le \phi(x,ct)
\quad\text{and}\quad 
 \phi(x,Ct) \le L C^q \phi(x,t), \quad \text{respectively},
\]
for all $(x,t)\in \Omega\times (0,\infty)$ and $0<c\le 1 \le C$. We will use the above inequalities many times without mention.  
Moreover,
if $\phi(x,\cdot)\in C^1([0,\infty))$ for each $x\in\Omega$, then for $0<p \le q$
\[
\text{ $\phi$ satisfies \inc{p} and \dec{q}}
\quad \Longleftrightarrow \quad 
\text{$p \le \frac{t\phi'(x,t)}{\phi(x,t)} \le q \ $ for all $x\in\Omega$ and $t\in(0,\infty)$.}
\]

\medskip

We next introduce classes of $\Phi$-functions and generalized Orlicz spaces. 
We refer to \cite{HarH19} for more details about the basics. 
 In the sequel we omit the words ``generalized'' and ``weak'' 
mentioned in parentheses. 

\begin{definition}\label{defPhi}
Let $\phi:\Omega\times[0,\infty)\to [0,\infty]$. We call 
$\phi$ a \textit{(generalized) $\Phi$-prefunction} if $x\mapsto \phi(x,|f(x)|)$ is measurable for every measurable function $f$ on $\Omega$, and $t\mapsto \phi(x,t)$ is increasing for 
every $x\in\Omega$ and satisfies that 
$\phi(x,0)=\lim_{t\to0^+}\phi(x,t)=0$ and $\lim_{t\to\infty}\phi(x,t)=\infty$
for every $x\in\Omega$. A $\Phi$-prefunction $\phi$ is a 
\begin{itemize}
\item[(1)] \textit{(generalized weak) $\Phi$-function}, denoted $\phi\in\Phiw(\Omega)$, if it satisfies \ainc{1};
\item[(2)] \textit{(generalized) convex $\Phi$-function}, denoted $\phi\in\Phic(\Omega)$, if $t\mapsto \phi(x,t)$ is left-con\-tin\-u\-ous and convex for every $x\in\Omega$.
%\item[(3)] $\phi$ is an \textit{(generalized) $N$-function} if the map $t\mapsto\phi(x,t)$ is positive when $t>0$, convex, continuous and for every $x\in\Omega$, and $\lim_{t\to0}\frac{\phi(x,t)}{t}=0$ and $\lim_{t\to\infty}\frac{\phi(x,t)}{t}=\infty$.
\end{itemize}
If $\phi$ is independent of $x$, then we write $\phi\in\Phiw$ 
or $\phi\in\Phic$ without ``$(\Omega)$''. 
\end{definition}

We note that convexity implies \inc{1} hence
$\Phic(\Omega)\subset \Phiw(\Omega)$. While we are mainly interested in convex functions 
for minimization problems and related PDEs, the class $\Phiw(\Omega)$ is very useful for constructing 
approximating functionals, as it allows much more flexibility. This will be utilized several 
times in this article.

\medskip

We state some properties of $\Phi$-functions, for which we refer to 
\cite[Chapter 2]{HarH19}. We note that in \cite{HarH19}, $\phi\simeq \psi$ for $\Phi$-prefunctions $\phi$ and $\psi$ means that there exists $C\geq 1$ such that $\phi(x,C^{-1}t)\leq \psi(x,t)\leq \phi(x,Ct)$ for all $x\in \Omega$ and $t\in[0,\infty)$. This is weaker than $\approx$. However, if $\phi$ and $\psi$ satisfy \adec{}, 
then $\simeq$ and $\approx$ are equivalent.

 Suppose $\phi,\psi : [0,\infty)\to [0,\infty)$ are $\Phi$-prefunctions, $\phi$ satisfies \ainc{1}, and $\psi$ 
satisfies \adec{1}. Then there exist a convex $\Phi$-prefunction $\tilde\phi$ and a concave $\Phi$-prefunction $\tilde\psi$ such that $\phi\approx\tilde \phi$ and $\psi\approx\tilde \psi$. 
Therefore, by Jensen's inequality we have that 
\begin{equation*}%\label{Jensen}
\phi\left(\fint_\Omega |f|\,dx\right) \lesssim \fint_\Omega \phi(|f|)\,dx 
\quad\text{and}\quad 
\fint_\Omega \psi(|f|)\,dx \lesssim \psi\left(\fint_\Omega |f|\,dx\right)
\end{equation*}
for every $f\in L^1(\Omega)$, 
where the implicit constants depend on the constants from the equivalence relations and $L$ from \ainc{1} or \adec{1}.

We define the conjugate function of $\phi:[0,\infty)\to [0,\infty)$ by
$$
\phi^*(x,t) :=\sup_{s\geq 0} \, (st-\phi(x,s)).
$$ 
The definition directly implies  
$$
ts\leq \phi(x,t)+\phi^*(x, s)\quad \text{for all }\ s,t\ge 0.
$$
Furthermore, if $\phi$ satisfies \ainc{p} and \adec{q} for some $1<p\le q$, then $\phi^*$ satisfies \ainc{q'} and \adec{p'} with $p'=\frac{p}{p-1}$ and $q'=\frac{q}{p-1}$, and  for any 
$s,t\ge 0$ and $\epsilon\in(0,1)$,
\[
ts 
\leq 
\phi(x,\epsilon^{\frac{1}{p}}t)+\phi^*(x,\epsilon^{-\frac{1}{p}}s) 
\lesssim 
\epsilon \phi(x,t)+\epsilon^{-\frac{1}{p-1}} \phi^*(x,s)
\]
and
\[
ts 
\leq 
\phi(x,\epsilon^{-\frac{1}{q'}}t)+\phi^*(x,\epsilon^{\frac{1}{q'}}s) 
\lesssim 
\epsilon^{-(q-1)} \phi(x,t)+\epsilon \phi^*(x,s).
\]
We will refer to any of the previous three formulas as ``\textit{Young's inequality}''. We also note that  $(\phi^*)^*=\phi$ if $\phi\in \Phic(\Omega)$ by \cite[Theorem~2.2.6]{DieHHR11}. 

If $\phi\in\Phic(\Omega)$, then there exists $\phi'$, which is increasing and right-continuous, with
$$
\phi(x,t)=\int_0^t \phi'(x,s)\, ds.
$$
We collect some results about this (right-)derivative $\phi'$. 

\begin{proposition}[Proposition~3.6, \cite{HasO22}]\label{prop0} 
Let $\gamma>0$ and suppose that $\phi\in\Phic(\Omega)$ has derivative $\phi'$.
\begin{itemize}
\item[(1)] 
If the derivative $\phi'$ satisfies \ainc{\gamma}, \adec{\gamma}, \inc{\gamma} or \dec{\gamma}, then the function $\phi$ satisfies \ainc{\gamma+1}, \adec{\gamma+1}, \inc{\gamma+1} or \dec{\gamma+1}, respectively, with the same constant $L\geq 1$. 
\item[(2)] 
If $\phi$ satisfies \adec{\gamma}, then $\phi(x,t)\approx t \phi'(x,t)$.
\item[(3)] 
If $\phi'$ satisfies \azero{} and \adec{\gamma} with constant $L\geq 1$, then $\phi$ also satisfies \azero{}, with constant depending on $L$ and $\gamma$.
\item[(4)] 
$\phi^*(x,\phi'(x,t))\le t\phi'(x,t)$. 
\end{itemize}
\end{proposition}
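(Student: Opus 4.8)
The plan is to reduce every item to the integral representation $\phi(x,t)=\int_0^t\phi'(x,s)\,ds$, valid for $\phi\in\Phic(\Omega)$, combined with the fact that $\phi'(x,\cdot)$ is increasing. Two elementary consequences of the monotonicity of $\phi'$ carry most of the weight: since $\phi'(x,s)\le\phi'(x,t)$ for $s\le t$ we get $\phi(x,t)\le t\phi'(x,t)$, and since $\phi'(x,s)\ge\phi'(x,t)$ for $t\le s\le 2t$ we get $\phi(x,2t)-\phi(x,t)=\int_t^{2t}\phi'(x,s)\,ds\ge t\phi'(x,t)$. Everything else is a computation with these two facts.

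For (1) I would rescale: writing $\phi(x,t)=t\int_0^1\phi'(x,t\sigma)\,d\sigma$, the hypothesis that $s\mapsto\phi'(x,s)/s^\gamma$ is almost increasing with constant $L$ gives, for $0<t_1<t_2$ and every $\sigma\in(0,1)$, $\phi'(x,t_1\sigma)\le L(t_1/t_2)^\gamma\phi'(x,t_2\sigma)$; integrating this in $\sigma$ produces $\phi(x,t_1)\le L(t_1/t_2)^{\gamma+1}\phi(x,t_2)$, which is exactly \ainc{\gamma+1} for $\phi$ with the \emph{same} constant $L$. The cases \adec{\gamma}, \inc{\gamma} and \dec{\gamma} are handled identically after reversing the inequality (and dropping the word ``almost'' when $L=1$). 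For (2), the bound $\phi(x,t)\le t\phi'(x,t)$ noted above needs no hypothesis; the reverse inequality follows from \adec{\gamma} and the second elementary bound: $t\phi'(x,t)\le\phi(x,2t)-\phi(x,t)\le\phi(x,2t)\le L2^\gamma\phi(x,t)$.

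For (3) the upper bound is $\phi(x,1)=\int_0^1\phi'(x,s)\,ds\le\phi'(x,1)\le L$, and for the lower bound \adec{\gamma} applied to $\phi'$ gives $\phi'(x,s)\ge L^{-1}s^\gamma\phi'(x,1)\ge L^{-2}s^\gamma$ for $s\in(0,1)$, so $\phi(x,1)\ge L^{-2}/(\gamma+1)$; hence \azero{} holds for $\phi$ with constant $L^2(\gamma+1)$. Finally, (4) is Fenchel's inequality made explicit: by definition $\phi^*(x,\phi'(x,t))=\sup_{s\ge 0}\big(s\phi'(x,t)-\phi(x,s)\big)$, and monotonicity of $\phi'$ shows $\phi(x,s)\ge\phi(x,t)+\phi'(x,t)(s-t)$ for every $s\ge 0$ (split into $s\ge t$ and $s<t$, comparing $\int_t^s\phi'$ with $(s-t)\phi'(x,t)$); therefore $s\phi'(x,t)-\phi(x,s)\le t\phi'(x,t)-\phi(x,t)\le t\phi'(x,t)$, and taking the supremum over $s$ gives the claim. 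No serious obstacle is expected; the only step needing attention is ensuring that the constant in (1) is literally the hypothesis constant $L$ rather than a power of it, which is precisely why the rescaled integral $t\int_0^1\phi'(x,t\sigma)\,d\sigma$, and not a cruder chain of monotonicity inequalities, is the right device.
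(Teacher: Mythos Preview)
Your proof is correct in all four parts. The paper does not supply its own proof of this proposition; it simply quotes the result from \cite{HasO22} (as Proposition~3.6 there), so there is nothing to compare against. The argument you give---the rescaled integral $\phi(x,t)=t\int_0^1\phi'(x,t\sigma)\,d\sigma$ for (1), the sandwich $\phi(x,t)\le t\phi'(x,t)\le\phi(x,2t)$ for (2), direct integration of the \adec{\gamma} bound for (3), and the subgradient inequality for (4)---is the standard one and matches what one would expect in the cited reference.
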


We will use the following inequality for $C^1$-regular $\Phi$-functions. 

\begin{proposition}[Proposition~3.8, \cite{HasO22}]\label{prop000}
Let $\phi\in \Phic\cap C^1([0,\infty))$ with $\phi'$ satisfying 
\inc{p-1} and \dec{q-1} for some $1<p\leq q$. 
Then, for $\kappa\in(0,\infty)$ and $x,y\in \R^N$, 
%for $\kappa\in(0,\infty)$ and $x,y\in \Rn$ the following hold:
%\begin{enumerate}
%%\item
%%$t\phi'(t)\approx\phi(t)$ and $\phi$ satisfies \inc{p} and \dec{q};
%%\vspace{6pt}
%\item
%$\displaystyle \frac{\phi'(|x|+|y|)}{|x|+|y|}|x-y|^2\approx
%\Big(\frac{\phi'(|x|)}{|x|}x-\frac{\phi'(|y|)}{|y|}y\Big) \cdot (x-y)$;
%\vspace{6pt}
%\item
%$\displaystyle \frac{\phi'(|x|+|y|)}{|x|+|y|}|x-y|^2 
%\lesssim \phi(|x|) -\phi(|y|)-\frac{\phi'(|y|)}{|y|}y\cdot(x-y)$; 
%\vspace{6pt}
%\item
\[
\phi(|x-y|) \lesssim 
\kappa\left[\phi(|x|)+\phi(|y|)\right]+ \kappa^{-1}\frac{\phi'(|x|+|y|)}{|x|+|y|}|x-y|^2.
\]
%\end{enumerate}
\end{proposition}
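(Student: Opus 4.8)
The plan is to reduce the vector inequality to a one-variable estimate. Write $r:=|x-y|$ and $m:=|x|+|y|$, so that $0\le r\le m$ by the triangle inequality; the case $m=0$ is trivial (both sides vanish), so assume $m>0$. By Proposition~\ref{prop0}(1), the hypotheses \inc{p-1} and \dec{q-1} on $\phi'$ upgrade to \inc{p} and \dec{q} for $\phi$ itself; in particular $\phi$ is doubling and satisfies \adec{q}. Everything then follows from the scalar claim
\[
\phi(r)\;\lesssim\; \kappa\,\phi(m)+\kappa^{-1}\,\frac{\phi(m)}{m^{2}}\,r^{2},\qquad 0\le r\le m,\ \kappa>0,
\]
combined with the two translations $\phi(m)=\phi(|x|+|y|)\lesssim \phi(|x|)+\phi(|y|)$ (doubling, since $|x|+|y|\le 2\max\{|x|,|y|\}$) and $\frac{\phi(m)}{m^{2}}\approx \frac{\phi'(m)}{m}$, the latter being Proposition~\ref{prop0}(2) applied with $\gamma=q$.

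For the scalar claim I would argue directly. Since $t\mapsto \phi(t)/t^{p}$ is increasing (\inc{p}) and $r\le m$, we have $\phi(r)\le (r/m)^{p}\,\phi(m)$. Because $p\ge 1$ and $r/m\le 1$ we get $(r/m)^{p}\le r/m$, and by the arithmetic--geometric mean inequality $r/m=\sqrt{\kappa\cdot\kappa^{-1}(r/m)^{2}}\le \tfrac12\bigl(\kappa+\kappa^{-1}(r/m)^{2}\bigr)$ for every $\kappa>0$. Chaining these gives $\phi(r)\le \tfrac12\kappa\,\phi(m)+\tfrac12\kappa^{-1}\frac{\phi(m)}{m^{2}}r^{2}$, which is the scalar claim. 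Inserting the two translations above yields the assertion, with implicit constant depending only on $p$, $q$ and the constant in Proposition~\ref{prop0}(2) — and, crucially, not on $\kappa$.

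There is no serious obstacle; the proof is short. The only points that need attention are, first, that one genuinely uses \emph{both} structure conditions on $\phi'$ — \inc{p-1} to obtain $\phi(r)\le(r/m)^{p}\phi(m)$ with no multiplicative loss, and \dec{q-1} to obtain doubling and the equivalence $\phi(m)\approx m\phi'(m)$ — and second, that the final constant be checked to be independent of $\kappa$, which it is because all the $\kappa$-dependence is absorbed once and for all into the arithmetic--geometric mean step.
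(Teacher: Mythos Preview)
Your argument is correct. The paper does not prove this proposition; it merely cites it from \cite{HasO22}, so there is no in-paper proof to compare against. Your reduction to the scalar estimate via $r=|x-y|\le m=|x|+|y|$, the chain $\phi(r)\le (r/m)^p\phi(m)\le (r/m)\phi(m)$ from \inc{p}, and the AM--GM split $r/m\le\tfrac12(\kappa+\kappa^{-1}(r/m)^2)$ are all sound, and the two translations back to the stated form (doubling of $\phi$ and $\phi(m)\approx m\phi'(m)$) follow from \dec{q} and Proposition~\ref{prop0}(2) exactly as you say, with constants independent of $\kappa$.
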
 

%%%%%%%%%%%%%%%%%%%%%%%%%%%%%%%%%%%%%%%%%%%%%%%%%%%%%%%%%%%%%%%%%%%%%%%%%%%%
%\subsection{Generalized Orlicz spaces}

\medskip

Let $L^0(\Omega)$ is the set of the 
measurable functions on $\Omega$. For $\phi\in\Phiw(\Omega)$, the \textit{generalized Orlicz space} (also known as the \textit{Musielak--Orlicz space}) is defined by 
\[
L^{\phi}(\Omega):=\big\{f\in L^0(\Omega):\|f\|_{L^\phi(\Omega)}<\infty\big\},
\] 
with the (Luxemburg) norm 
\[
\|f\|_{L^\phi(\Omega)}:=\inf\bigg\{\lambda >0: \varrho_{\phi}\Big(\frac{f}{\lambda}\Big)\leq 1\bigg\},
\ \ \text{where}\ \ \varrho_{\phi}(f):=\int_\Omega\phi(x,|f(x)|)\,dx.
\]
We denote by $W^{1,\phi}(\Omega)$ the set of $f\in L^{\phi}(\Omega)\cap W^{1,1}(\Omega)$ satisfying that $|Df| \in L^{\phi}(\Omega)$ with the norm $\|f\|_{W^{1,\phi}(\Omega)}:=\|f\|_{L^\phi(\Omega)}+\|\,|Df|\|_{L^\phi(\Omega)}$. Note that if $\phi$ satisfies \adec{q} for some $q\ge 1$, then $f\in L^\phi(\Omega)$ if and only if $\varrho_\phi(f)<\infty$, and if $\phi$ satisfies \azero{}, \ainc{p} and \adec{q} for some $1<p\leq q$, then $L^\phi(\Omega)$ and $W^{1,\phi}(\Omega)$ are reflexive Banach spaces. In addition, we denote by $W^{1,\phi}_0(\Omega)$ the closure of $C^\infty_0(\Omega)$ in $W^{1,\phi}(\Omega)$. For more information about the generalized Orlicz and Orlicz--Sobolev spaces, we refer to the monographs  
\cite{CheGSW21, HarH19, LanM19} and also \cite[Chapter~2]{DieHHR11}.

\smallskip

%%%%%%%%%%%%%%%%%%%%%%%%%%%%%%%%%%%%%%%%%%%%%%%%%%%%%%%%%%%%%%%%%%%%%%%%%%%%
\subsection{New conditions}\label{subsect:newcondition}

The condition \aone{} was introduced in \cite{Has15} (see also \cite{MaeMOS13a}) and is essentially 
optimal for the boundedness of the maximal operator in generalized Orlicz spaces. 
It also implies the H\"older continuity of solutions and (quasi)minimizers 
\cite{BenHHK21, HarHL21, HarHT17}. For higher regularity, 
we introduced in \cite{HasO22} a ``vanishing \aone{}'' condition, denoted \VA{}, and 
a weak vanishing version, \wVA{}. These previous studies applied to 
$\Phi$-functions $\phi:\Omega\times [0,\infty)\to [0,\infty)$ and discussed the sharpness of the conditions. 

We generalize the conditions to the non-Uhlenbeck situation. 
It can be easily seen that \VA{}$\Longrightarrow$\wVA{}$\Longrightarrow$\aone{}. 
The results of this paper require only \wVA{}, but \VA{} is included since it is 
simpler to check and is a sufficient condition. 

\begin{definition}
Let $G:\Omega\times \Rn\to \R^N$, $N\in\mathbb N$, $\epsilon\in[0,1]$, $K, \bL>0$, $r\in (0,1]$ 
and $\omega:[0,1]\to [0,1]$. We consider the inequality 
\[
|G(x,\xi)-G(y,\xi)|\leq \bL\omega(r)\big(|G(y,\xi)|+1\big) 
\quad\text{when }\ |G(y,\xi)|\in [0,K|B_r|^{-1+\epsilon}]
\]
for all $x,y\in B_r\cap \Omega$ and $\xi\in\Rn$.
We say that $G$ satisfies:
\begin{itemize}
\item[\normalfont(A1)]\label{aone}
if for any $K>0$ there exists $\bL=\bL_K>0$ such that the inequality holds for 
$\omega\equiv 1$ and $\epsilon=0$.
\item[\normalfont(VA1)]\label{VA1}
if for any $K>0$ there exists $\bL=\bL_K>0$ and there exists a modulus of continuity 
$\omega:[0,1]\to [0,1]$ such that the inequality holds for 
$\epsilon=0$.
\item[\normalfont(wVA1)]\label{wVA1}
if for any $K>0$ there exists $\bL=\bL_K>0$ and for every $\epsilon>0$ there exists a modulus of continuity $\omega=\omega_\epsilon:[0,1]\to [0,1]$ such that the inequality holds.
\end{itemize}
Here, a modulus of continuity $\omega$ means that $\omega$ is concave and nondecreasing and satisfies $\lim_{r\to 0^+}\omega(r)=\omega(0)=0$.
\end{definition}

If we compare these conditions in the case $N=1$ with the previously mentioned conditions 
in earlier papers, we see that they do not look exactly the same. First of all 
the earlier conditions correspond to the case $K=1$ only. If we consider quasi-convex domains $\Omega$ 
and $\phi$ satisfies \adec{}, 
then the condition with any $K$ is equivalent to the condition with fixed $K$. 
This is proved by a simple chain argument, see \cite[Lemma~3.3]{HarH19b} for \aone{}.  

Furthermore, the earlier conditions did not allow $\xi$ satisfying $|G(y,\xi)|<\omega(r)$. 
For instance, the old formulation of \VA{} (for $G(x,\xi)=\phi(x,|\xi|)$) was to assume that 
\[
|\phi(x,t)-\phi(y,t)|\le L \omega(r) \phi(y,t) 
\quad\text{when } \phi(y,t)\in [\omega(r), |B_r|^{-1}]
\] 
Compared to new \VA{}, the right-hand side has $\phi(y,t)$ instead of $\phi(y,t)+1$ but on the 
other hand the inequality is not assumed for $\phi(y,t)\in [0,\omega(r))$. 
However, we can show that these are again equivalent if $\phi\in \Phiw(\Omega)$ is continuous in $t$.  
A similar argument applies to the other conditions as well. 

Let us show that the old version of \VA{} implies \VA{} with the same $\omega$.
Indeed, we need only show that the old condition implies \VA{} when $\phi(y,t)<\omega(r)$ 
since this is trivial when $\phi(y,t)\in [\omega(r), |B_r|^{-1}]$. 
Fix $B_r$ and let $y\in B_r$. Since $\phi$ is increasing and continuous, we can find $s>t$ with $\phi(y,s)=\omega(r)$. Then by the old condition,
\[
\phi(x,t)\le \phi(x,s)\le (1+L \omega(r)) \phi(y,s) \le (L+1)\omega(r) \quad \text{for all }\ x\in B_r. 
\] 
Hence we obtain \VA{} as follows:
\[
|\phi(x,t)-\phi(y,t)| \le \phi(x,s)+\phi(y,s) \le (L+2)\omega(r)\le (L+2)\omega(r)(\phi(y,t)+1).
\]
For the opposite implication, if $\phi(y,t)\in [\omega(r)^\frac{1}{2}, |B_r|^{-1}]$, then 
\[
\omega(r) (\phi(y,t) + 1) \le 2 \omega(r)^\frac{1}{2}\phi(y,t). 
\]
It follows that \VA{} with $\omega$ implies the old version of \VA{} with 
$\omega^{\frac{1}{2}}$ in place of $\omega$. 

In summary, under minimal and natural assumptions on $G$, our conditions are equivalent to previous versions. 
In particular, from \cite[Chapter~7]{HarH19} and \cite[Section~8]{HasO22}, 
we know that \aone{} is equivalent to logarithmic Hölder continuity in the 
variable exponent case $\phi(x,t)=t^{p(x)}$ while \VA{} and \wVA{} hold with $\omega$ if 
and only if $p\in C^{0,\omega}$. For the double phase case $\phi(x,t)=t^p + a(x)t^q$, 
\VA{} is more or less equivalent to $\frac qp < 1+\frac\alpha n$ while 
\wVA{} and \aone{} require only $\frac qp \le 1+\frac\alpha n$, where $a\in C^{0,\alpha}(\Omega)$. 

\begin{remark}
Baroni, Colombo and Mingione \cite{BarCM18, ColM15-2} initiated the study of double phase 
with additional information on the minimizer. Specifically, they considered bounded 
or H\"older continuous minimizers and showed that the assumption $\frac qp \le 1+\frac\alpha n$ 
can be relaxed in these cases. Ok \cite{Ok20} considered analogous results with Lebesgue 
integrability information on the minimizer. In \cite{BenHHK21, HarHL21}, the generalized 
Orlicz case with additional information was considered, but only for lower regularity, i.e.\ 
Harnack's inequality and H\"older continuity for some $\gamma>0$. 
It is possible, but non-trivial, to develop the ideas from this paper to prove maximal 
regularity in the generalized Orlicz case with additional information; we will return to this issue in a follow-up paper \cite{HasO_pp}.
\end{remark}

%%%%%%%%%%%%%%%%%%%%%%%%%%%%%%%%%%%%%%%%%%%%%%%%%%%%%%%%%%%%%%%%%%%%%%%%%%%%%%%%%%%%
%%%%%%%%%%%%%%%%%%%%%%%%%%%%%%%%%%%%%%%%%%%%%%%%%%%%%%%%%%%%%%%%%%%%%%%%%%%%%%%%%%%%
%%%%%%%%%%%%%%%%%%%%%%%%%%%%%%%%%%%%%%%%%%%%%%%%%%%%%%%%%%%%%%%%%%%%%%%%%%%%%%%%%%%%

\section{Construction of growth function}\label{sect:constructionphi}

In this section, we construct an auxiliary function $\phi\in \Phic(\Omega)$ that measures the growth 
of $A:\Omega\times\Rn\to\Rn$ or $F:\Omega\times \Rn\to[0,\infty)$. 
%In the next definition, we use the same parameter $p$ as for $A$ or $F$, but a different $q$. 

\begin{definition}\label{def:growthfunction}
Let $A:\Omega\times \Rn \to \Rn$ satisfy Assumption~\ref{ass:A}(1).
%and $p$ be as in Assumption~\ref{ass:A}. 
We say that  $\phi\in \Phic(\Omega)$ is a \textit{growth function of $A$} if 
the following conditions are satisfied:
$\phi(x,\cdot)\in C^1([0,\infty))$ for every $x\in\Omega$; 
$\phi'$ satisfies \azero{}, \inc{p_1-1} and \dec{q_1-1} for some $1<p_1\le q_1$; 
and 
\begin{equation}\label{Agrowth}
|A(x,\xi)|+ |\xi|\,|D_\xi A(x,\xi)|\le \Lambda \phi'(x,|\xi|),
\end{equation}
\begin{equation}\label{Aellipticity}
D_\xi A(x,\xi)\tilde \xi \cdot \tilde \xi \ge \nu \frac{\phi'(x,|\xi|)}{|\xi|}|\tilde \xi|^2
%\qquad\text{\color{red}(ellipticity)}
\end{equation} 
for some $0<\nu\le \Lambda$ and for all $x\in \Omega$ and $\xi, \tilde \xi\in \Rn \setminus\{ 0\}$. 

Let $F:\Omega\times \R\to \R$ satisfy Assumption~\ref{ass:F}(1). 
We say that $\phi\in \Phic(\Omega)$ is a \textit{growth function of $F$} 
if it is a growth function of $A=D_\xi F$ in the above sense.  
\end{definition}

\begin{remark}\label{rmk-growth}
In the above definition, we may assume without loss of generality that $\phi(x,\cdot)\in C^2((0,\infty))$. Indeed, define $\psi'(x,t):=\int_0^t\frac{\phi'(x,s)}s\,ds$ and $\psi(x,t):=\int_0^t\psi'(x,s)\,ds$. Then $\psi(x,\cdot)\in C^2((0,\infty))$ satisfies $\phi\approx \psi$, $\min\{1,p_1-1\}\phi'(x,t)\le \psi'(x,t) \le \max\{1,q_1-1\} \phi'(x,t)$, and \inc{p_1-1} and \dec{q_1-1} with the same  $p_1$ and $q_1$.
\end{remark}

\begin{proposition}\label{prop:growthfunction}
Every $A:\Omega\times \Rn \to \Rn$ satisfying Assumption~\ref{ass:A} has 
a growth function $\phi\in \Phic(\Omega)$ with constants $1< p_1\le q_1$ and $0<\nu\le \Lambda$ depending only 
on the parameters $p$, $q$ and $L$  in Assumption~\ref{ass:A}. Specifically, $p_1=p$ and $q_1\ge q$.
\end{proposition}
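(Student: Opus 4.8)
The plan is to take for $\phi'$, up to a regularization, the radial size of $D_\xi A$. For $x\in\Omega$ and $t>0$ I would set
\[
h(x,t):=\sup_{|e|=1}|D_\xi A(x,te)|\qquad\text{and}\qquad\psi(x,t):=t\,h(x,t),
\]
with $h(x,0)=\psi(x,0)=0$. Since $\xi\mapsto D_\xi A(x,\xi)$ is continuous on $\Rn\setminus\{0\}$, the supremum may be restricted to a countable dense subset of the unit sphere, and since $x\mapsto D_\xi A(x,\xi)$ is measurable (a pointwise limit of difference quotients of the measurable map $A(\cdot,\xi)$), the functions $h(\cdot,t)$ and $\psi(\cdot,t)$ are measurable. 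By Assumption~\ref{ass:A}(2) the radial functions $t\mapsto|D_\xi A(x,te)|$ satisfy \azero{}, \ainc{p-2} and \adec{q-2} with a constant $L$ uniform in $x$ and in the unit vector $e$; a supremum of functions obeying these three conditions with a common constant obeys them with the same constant, so $h$ satisfies \azero{}, \ainc{p-2}, \adec{q-2}, and hence $\psi$ satisfies \azero{}, \ainc{p-1}, \adec{q-1}.

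Next I would check that $\psi(x,|\xi|)$ already yields the two bounds of Definition~\ref{def:growthfunction}, with constants depending only on $p,q,L$. The estimate $|\xi|\,|D_\xi A(x,\xi)|\le\psi(x,|\xi|)$ is immediate. Writing $A(x,\xi)=\int_0^1 D_\xi A(x,s\xi)\,\xi\,ds$ (legitimate since $A(x,0)=0$, $A(x,\cdot)\in C^1(\Rn\setminus\{0\})$, and the integral converges by \ainc{p-2}) and using \ainc{p-2} for $h$ together with $\int_0^1 s^{p-2}\,ds=\tfrac1{p-1}$ gives $|A(x,\xi)|\lesssim\psi(x,|\xi|)$. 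For the ellipticity, fix $\xi,\tilde\xi\in\Rn\setminus\{0\}$ and put $e=\tilde\xi/|\tilde\xi|$; applying the quasi-isotropy condition Assumption~\ref{ass:A}(3) with $\xi'$ ranging over $\{|\xi'|=|\xi|\}$ gives $h(x,|\xi|)\le L\,D_\xi A(x,\xi)e\cdot e$ (in particular $D_\xi A(x,\xi)e\cdot e>0$), so
\[
D_\xi A(x,\xi)\tilde\xi\cdot\tilde\xi=|\tilde\xi|^2\,D_\xi A(x,\xi)e\cdot e\ge\frac1L\,h(x,|\xi|)\,|\tilde\xi|^2=\frac1L\,\frac{\psi(x,|\xi|)}{|\xi|}\,|\tilde\xi|^2.
\]

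It remains to replace $\psi$ by an equivalent function regular enough to be the derivative of a convex $\Phi$-function while keeping the lower exponent exactly $p$; I would do this in two monotone steps. First put $\psi_1(x,t):=t^{p-1}\inf_{s\ge t}\psi(x,s)\,s^{1-p}$: then $t\mapsto\psi_1(x,t)/t^{p-1}$ is nondecreasing, i.e.\ $\psi_1$ satisfies \inc{p-1}, while \ainc{p-1} for $\psi$ gives $\psi_1\approx\psi$, and \adec{q-1} and \azero{} survive. Then put $\psi_2(x,t):=t^{q-1}\sup_{s\ge t}\psi_1(x,s)\,s^{1-q}$: now $t\mapsto\psi_2(x,t)/t^{q-1}$ is nonincreasing, so $\psi_2$ satisfies \dec{q-1}, while \adec{q-1} for $\psi_1$ gives $\psi_2\approx\psi_1$, \azero{} is retained, and a short computation using $q\ge p$ together with \inc{p-1} for $\psi_1$ shows $\psi_2$ still satisfies \inc{p-1}. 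A function obeying both \inc{p-1} and \dec{q-1} is continuous on $(0,\infty)$, and \inc{p-1} with $p>1$ forces $\psi_2(x,t)\to0$ as $t\to0^+$, so $\phi(x,t):=\int_0^t\psi_2(x,s)\,ds$ is $C^1$ in $t$; it is convex (its derivative $\psi_2$ is increasing), measurable in $x$, vanishes at $0$, and tends to $\infty$ with $t$ (by \inc{p-1} and \azero{}), so $\phi\in\Phic(\Omega)$ with $\phi'=\psi_2$. Since $\phi'=\psi_2\approx\psi$, the previous paragraph shows $\phi$ is a growth function of $A$ with $p_1=p$, $q_1=q$ (in particular $q_1\ge q$), and $\nu,\Lambda$ depending only on $p,q,L$.

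The main obstacle is the regularization step, and specifically checking that the second (supremum) operation does not destroy \inc{p-1} — this is exactly where $q\ge p$ enters. The quasi-isotropy input is delicate but short: condition (3) is precisely what upgrades the pointwise positivity $D_\xi A(x,\xi)e\cdot e>0$ to the uniform lower bound $D_\xi A(x,\xi)e\cdot e\gtrsim h(x,|\xi|)$ needed for \eqref{Aellipticity}. Everything else is a direct consequence of the definitions and Assumption~\ref{ass:A}.
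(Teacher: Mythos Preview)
Your proof is correct and follows the same overall plan as the paper---define the radial size $\psi(x,t)=t\sup_{|e|=1}|D_\xi A(x,te)|$, verify the growth and ellipticity bounds \eqref{Agrowth}--\eqref{Aellipticity} for $\psi$ directly from Assumption~\ref{ass:A}(2)--(3), and then replace $\psi$ by an equivalent function with exact \inc{}/\dec{} properties so that its antiderivative lies in $\Phic(\Omega)$.

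The genuine difference is in the regularization step. The paper first integrates $\psi'$ to $\psi$, then invokes \cite[Lemmas~2.2.1 and~2.2.6]{HarH19} as a black box to obtain $\widetilde\psi\approx\psi$ satisfying \inc{p} and \dec{q_1} for \emph{some} $q_1\ge q$, and finally sets $\phi'(x,t)=\widetilde\psi(x,t)/t$. You instead regularize $\psi$ itself by two explicit monotone envelopes, $\psi_1(x,t)=t^{p-1}\inf_{s\ge t}\psi(x,s)s^{1-p}$ and $\psi_2(x,t)=t^{q-1}\sup_{s\ge t}\psi_1(x,s)s^{1-q}$, and check by hand that the second step preserves \inc{p-1}. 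This is more elementary (no external reference needed) and slightly sharper: you end up with $q_1=q$ exactly, whereas the paper's route only yields $q_1\ge q$, which is why the statement of the proposition is phrased that way. The paper's approach is shorter on the page precisely because the work is hidden in the cited lemmas; yours makes the mechanism transparent and shows that no loss in the upper exponent is actually necessary.
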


\begin{proof}
Define $\psi\in \Phiw(\Omega)$ by 
\[
\psi(x,t):=\int_0^t\psi'(x,s)\,ds 
\quad\text{and}\quad 
\psi'(x, t) := \sup_{|\xi|=t} |\xi|\,|D_\xi A(x,\xi)|.
\]
By Assumption~\ref{ass:A}(2), $\psi'$ satisfies \azero{}, \ainc{p-1} and \adec{q-1}, and hence $\psi$ satisfies \azero{}, \ainc{p} and \adec{q} (Proposition~\ref{prop0}(1)). 
In view of \cite[Lemmas 2.2.1 and 2.2.6]{HarH19}, there exists $\widetilde\psi\in \Phic(\Omega)$ with $\widetilde\psi\approx \psi$ which satisfies \azero{}, \inc{p} and \dec{q_1} for some $q_1\ge q$.
Next we define $\phi\in \Phic(\Omega)$ with $\phi(x,\cdot) \in C^1([0,\infty))$ for every $x\in\Omega$ by
\[
\phi(x,t):=\int_0^t \frac{\widetilde\psi (x,s)}{s}\,ds.
\] 
Then $\phi'(x,t)= \frac{\widetilde\psi(x,t)}{t} \approx \frac{\psi(x,t)}{t}\approx \psi'(x,t)$ and 
$\phi'$ satisfies \azero{}, \inc{p-1} and \dec{q_1-1}.

We now show that $\phi$ is the desired  growth function. Note that by Assumption~\ref{ass:A}(2),
\[\begin{split}
|A(x,\xi)| & =\left|\int_0^1\frac{d}{dt} A (x,t\xi) \,dt\right| 
=\left|\int_0^1 D_\xi A(x,t\xi)\cdot \xi \,dt\right| 
\le \int_0^1 |D_\xi A(x,t\xi)|\,|\xi| \,dt \\
& \le L \left(\int_0^1 t^{p-2} \,dt\right) |D_\xi A(x,\xi)|\,|\xi| 
\le\frac{L}{p-1} \psi'(x,|\xi|). %|\partial A(x,\xi)|\,|\xi|.
\end{split} \]
Then \eqref{Agrowth} follows from   $\phi'\approx \psi'$ and the definition of $\psi'$, and \eqref{Aellipticity} holds by  Assumption~\ref{ass:A}(3) with $\phi'\approx \psi'$ and the definition of $\psi'$.
%
%The converse is clear by \eqref{phiAequiv}.
\end{proof}

From now on, if $A:\Omega \times \Rn\to\Rn$ satisfies Assumption~\ref{ass:A}, we take its growth function as the one constructed in the above proposition, hence the constants $p_1$, $q_1$, $\nu$, $\Lambda$ in Definition~\ref{def:growthfunction} depend on the constants $p$, $q$, $L$ in Assumption~\ref{ass:A}.

\begin{remark} Inequality \eqref{Aellipticity} with $\phi'$ satisfying \inc{p-1} and \dec{q_1-1} implies the following strict monotonicity condition: 
\begin{equation}\label{monotonicity}
(A(x,\xi)-A(x,\tilde \xi)) \cdot (\xi-\tilde \xi) \gtrsim \frac{\phi'(x,|\xi|+|\tilde \xi|)}{|\xi|+|\tilde \xi|}|\xi - \tilde \xi|^2, \qquad x\in\Omega, \ \ \xi,\tilde \xi\in \Rn\setminus \{0\}.
\end{equation}
Furthermore, letting $\tilde \xi \to 0$ in the preceding inequality, we also have the coercivity condition:
\[
A(x,\xi) \cdot \xi \gtrsim \phi'(x,|\xi|)|\xi| \approx \phi(x,|\xi|), \qquad x\in\Omega, \ \ \xi\in \Rn
\]
which yields
\begin{equation}\label{phiAequiv}
\phi(x,|\xi|) \lesssim A(x,\xi)\cdot \xi \le |\xi| |A(x,\xi)| 
\lesssim |\xi|\phi'(x,|\xi|) \approx \phi(x,|\xi|), \qquad x\in\Omega,\ \ \xi\in \Rn.
\end{equation} 
\end{remark}

From now on, we denote
\[
\intA(x,\xi ):=|\xi |\,A(x,\xi ),
\]
and write $\phi\approx \intA$ as an abbreviation of \eqref{phiAequiv}. (The notation is chosen since $A^{(-1)}$ 
is comparable to the antiderivative of $A$.) Since \aone{} is invariant under equivalence 
\cite[Lemma~4.1.3]{HarH19} we obtain the following result:

\begin{proposition}\label{prop:Aphiaone}
Let $A:\Omega\times \Rn \to \Rn$ satisfy Assumption~\ref{ass:A} and $\phi$ be its growth function. Then 
$\intA$ satisfies \aone{} if and only if $\phi$ satisfies \aone{}.
\end{proposition}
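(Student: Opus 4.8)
The plan is to reduce the claim to the already-cited invariance of \aone{} under equivalence, namely \cite[Lemma~4.1.3]{HarH19}. Recall from \eqref{phiAequiv} that the growth function $\phi$ constructed in Proposition~\ref{prop:growthfunction} satisfies
\[
\phi(x,|\xi|) \lesssim A(x,\xi)\cdot \xi \le |\xi|\,|A(x,\xi)| \lesssim \phi(x,|\xi|)
\]
for all $x\in\Omega$ and $\xi\in\Rn$, with implicit constants depending only on $n$, $p$, $q$, $L$. The middle and right-hand sides are precisely $|\intA(x,\xi)| = |\xi|\,|A(x,\xi)|$, so this says $|\intA(x,\xi)| \approx \phi(x,|\xi|)$ uniformly. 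Since $|\intA(x,\xi)|$ depends on $\xi$ only through $|\xi|$ (because $\phi$ is radial in $\xi$ and the comparison is two-sided), one should first record that, up to equivalence, $\intA$ behaves as a function of $(x,|\xi|)$; this is what makes the scalar-valued theory applicable even though \emph{a priori} $\intA$ takes values in $\Rn$.

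The main steps, in order, would be: (i) observe $|\intA(x,\xi)|\approx\phi(x,|\xi|)$ as just noted; (ii) note that the condition \aone{} for $G=\intA$, as formulated in Section~\ref{subsect:newcondition}, depends on $G$ only through $|G(x,\xi)|$ and $|G(y,\xi)|$, hence is a statement about the scalar function $(x,t)\mapsto \phi(x,t)$ up to the multiplicative constant from the equivalence; (iii) invoke \cite[Lemma~4.1.3]{HarH19} (the invariance of \aone{} under $\approx$) to transfer \aone{} back and forth between $|\intA|$ and $\phi$. Concretely, if $\intA$ satisfies \aone{}, then for each $K$ we get $\bL_K$ with $|\intA(x,\xi)-\intA(y,\xi)|\le \bL_K(|\intA(y,\xi)|+1)$ on the relevant range; using the triangle inequality in $\Rn$ together with $|\intA(x,\xi)|\approx\phi(x,|\xi|)$ one extracts $|\phi(x,t)-\phi(y,t)| \lesssim |\intA(x,te)-\intA(y,te)| + (\text{comparison error})\,\phi(y,t)$, and after adjusting $K$ and $\bL_K$ by the comparison constants one obtains \aone{} for $\phi$; the reverse implication is symmetric.

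I would expect the only genuine subtlety to be bookkeeping with the constant $K$: the range $|G(y,\xi)|\in[0,K|B_r|^{-1}]$ for $G=\intA$ translates, under $|\intA|\approx\phi$, into the range $\phi(y,|\xi|)\in[0,cK|B_r|^{-1}]$ for a constant $c$ depending on the equivalence, so one must use that \aone{} is required to hold \emph{for every} $K>0$ (with $\bL=\bL_K$ allowed to depend on $K$) — this is exactly why the definition quantifies over all $K$, and it makes the translation lossless. Apart from this, the proof is a direct citation of the equivalence-invariance of \aone{}, so there is no real obstacle; the statement is essentially a formal consequence of $\phi\approx\intA$.
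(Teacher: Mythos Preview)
Your proposal is correct and follows exactly the paper's approach: the paper does not give a separate proof but simply states the proposition as an immediate consequence of $\phi\approx\intA$ together with the invariance of \aone{} under equivalence from \cite[Lemma~4.1.3]{HarH19}. Your extra bookkeeping (handling the vector-valued nature of $\intA$ via the triangle inequality and adjusting $K$ through the equivalence constants) is a legitimate clarification of what is implicit in that citation, not a different argument.
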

%
%\begin{proof}
%Suppose that $A$ satisfies \aone. Suppose $\phi_{B_r}^-(t)\le K|B_r|^{-1}$, and 
%put $\xi=(t,0,\dots,0)\in\Rn$. Then there exists $x'\in B_r$ such that 
%$\phi_{B_r}^-(t)\le \phi(x',t)\le 2\phi_{B_r}^-(t)$. By \eqref{phiAequiv} that 
%\[
%|\xi|\,|A(x',\xi)| \le 2Kc|B_r|^{-1}.
%\]
%%Moreover, by \ainc{p-1} of $t\mapsto t |A(x,te)|$, there exists $s=s(p,L)\in(0,1)$ such that $|sz|\,|A(x',sz)|\le |B_r|^{-1}$. 
%Therefore, %by \aone{} and the \ainc{} and \adec{} conditions of $\phi$ and $A$, 
%%\[
%%\phi(x,t)\approx \phi(x,s t) \approx |sz| |A(x,sz)| \overset{\aone}{\approx} |sz| |A(x',sz)| \approx \phi(x',st) \le 2 \phi^-_{B_r}(t), \quad x\in B_r.
%%\]
%\[
%\phi(x,t) \approx |z| |A(x,z)| \overset{\aone}{\approx} 
%|z| |A(x',z)| \approx \phi(x',t) \le 2 \phi^-_{B_r}(t), \quad x\in B_r.
%\]
%This means $\phi$ satisfies \aone. The converse can be proved in an almost same way, hence we omit the proof.
%\end{proof}

We next consider the continuity hypotheses \wVA{}. This assumption is not invariant 
under equivalence, but we have the following implications for the growth function. 
The second part of the proposition is a stronger version of \aone{}, where we notice that the range of $\phi^-_{B_r}(t)$ is changed to $[\omega(r),|B_r|^{-1}]$, which is needed later on. Here $p'$ denotes the H\"older conjugate exponent of $p$, i.e. $p'=\frac p{p-1}$. 

\begin{proposition}\label{prop:AphiVA}
Let $A:\Omega\times \Rn \to \Rn$ satisfy Assumption~\ref{ass:A} and $\phi$ be its growth function. If $\intA$ satisfies \wVA{}, then for each $\epsilon\in(0,1]$ with $\omega=\omega_\epsilon$ and $r\in (0,1]$,
\begin{enumerate}
\item 
for all $x,y\in B_r\cap \Omega$ and 
$\xi\in\R^n$ satisfying $\phi_{B_r}^-(|\xi|)\in [0, |B_r|^{-1+\epsilon}]$,
\[
|A(x,\xi)-A(y,\xi)| \le c\omega(r)^\frac1{p'}\big((\phi')^-_{B_r}(|\xi|)+1\big);
\]
\item 
for all $t>0$ satisfying $\phi^-_{B_r}(t)\in[\omega(r),|B_r|^{-1}]$,
\[
\phi^+_{B_r}(t) \le c \phi^-_{B_r}(t).
\]
\end{enumerate}
Here the constants $c>0$ depend on $p$, $q$, $L$ and $\bL_{2c_1}$, where $c_1\ge1$ is the implicit constant from $\phi\approx \intA$ \eqref{phiAequiv}.  
\end{proposition}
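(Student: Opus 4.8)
The plan is to reduce everything to the \wVA{} hypothesis on $\intA$ applied with the single choice $K=2c_1$, where $c_1\ge1$ is the implicit constant in $\phi\approx\intA$; this produces the constant $\bL=\bL_{2c_1}$ and, for the fixed $\epsilon$, a modulus $\omega=\omega_\epsilon$. I would record two auxiliary facts first: by Proposition~\ref{prop0}(2) and \adec{q_1} one has $\phi(x,t)\approx t\phi'(x,t)$, hence $\phi^-_{B_r}(t)\approx t(\phi')^-_{B_r}(t)$ and the same for the supremum, with constants depending only on $q_1,L$; and $\phi$ satisfies \aone{}, because $\intA$ does (\wVA{}$\Rightarrow$\aone{}) and \aone{} passes across $\approx$ by Proposition~\ref{prop:Aphiaone}, the constant being controlled by $\bL_{2c_1},c_1,p,q$. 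For any $t$ and any ball I would also fix $y_0\in B_r\cap\Omega$ with $\phi(y_0,t)\le 2\phi^-_{B_r}(t)$.

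For part (2), fixing $t$ with $\phi^-_{B_r}(t)\in[\omega(r),|B_r|^{-1}]$ and $\xi=te$, I would split on the size of $\phi^-_{B_r}(t)$. If $\phi^-_{B_r}(t)\le|B_r|^{-1+\epsilon}$, then $|\intA(y_0,\xi)|\le c_1\phi(y_0,t)\le 2c_1|B_r|^{-1+\epsilon}$, so \wVA{} is available at $y_0$ and gives $|\intA(x,\xi)|\le(1+\bL\omega(r))|\intA(y_0,\xi)|+\bL\omega(r)$ for all $x\in B_r\cap\Omega$; since $|\intA(y_0,\xi)|\gtrsim\phi^-_{B_r}(t)\ge\omega(r)$ the last term is absorbed, and $\phi(x,t)\le c_1|\intA(x,\xi)|\lesssim\phi^-_{B_r}(t)$ gives the claim after a supremum over $x$. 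If instead $\phi^-_{B_r}(t)>|B_r|^{-1+\epsilon}$ --- which forces $|B_r|<1$, hence $\phi(y_0,t)\ge\phi^-_{B_r}(t)>1$ --- then \wVA{} is no longer applicable at $y_0$, so I would use \aone{} for $\phi$ at $y_0$ with $\phi(y_0,t)\le 2|B_r|^{-1}$ to get $\phi(x,t)\le(1+2\bL')\phi(y_0,t)\le 2(1+2\bL')\phi^-_{B_r}(t)$, and again take a supremum.

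For part (1), fixing $x,y\in B_r\cap\Omega$ and $\xi\neq 0$ (the case $\xi=0$ being trivial) with $\phi^-_{B_r}(|\xi|)\le|B_r|^{-1+\epsilon}$, I would split on whether $|\xi|\le\omega(r)^{1/p}$. In the small-$\xi$ case I would not use the continuity hypothesis at all: \eqref{Agrowth} together with \inc{p-1} and \azero{} for $\phi'$ gives $|A(x,\xi)|+|A(y,\xi)|\lesssim|\xi|^{p-1}\le\omega(r)^{1/p'}$, which already dominates the right-hand side. In the complementary case I would pick $y_0$ with $\phi(y_0,|\xi|)\le 2\phi^-_{B_r}(|\xi|)\le 2|B_r|^{-1+\epsilon}$, apply \wVA{} at $y_0$, and add the two bounds through $y_0$ to get $|\intA(x,\xi)-\intA(y,\xi)|\lesssim\omega(r)(\phi^-_{B_r}(|\xi|)+1)$; dividing by $|\xi|$ and using $\phi^-_{B_r}(|\xi|)/|\xi|\approx(\phi')^-_{B_r}(|\xi|)$ leaves the two terms $\omega(r)(\phi')^-_{B_r}(|\xi|)$, which is $\le\omega(r)^{1/p'}(\phi')^-_{B_r}(|\xi|)$ because $\omega(r)\le 1$, and $\omega(r)/|\xi|$, which is $\le\omega(r)^{1/p'}$ precisely because $|\xi|>\omega(r)^{1/p}$.

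The main obstacle is the mismatch of thresholds: \wVA{} for $\intA$ only controls values $|\intA(y,\xi)|\lesssim|B_r|^{-1+\epsilon}$, whereas part (2) must reach up to $|B_r|^{-1}$, and no fixed $K$ bridges this gap as $r\to 0$; this is what forces handing the large-value range over to $\phi$ and exploiting that \aone{}, unlike \wVA{}, is invariant under $\approx$. The secondary difficulty is that converting $\intA$ to $A$ costs a factor $|\xi|^{-1}$, which both explains the loss from $\omega$ to $\omega^{1/p'}$ and makes the separate small-$\xi$ argument (via the pointwise bound \eqref{Agrowth} rather than via continuity) genuinely necessary.
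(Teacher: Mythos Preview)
Your proof is correct and follows essentially the same route as the paper: pick $y_0$ realizing (up to factor $2$) the infimum $\phi^-_{B_r}$, apply \wVA{} for $\intA$ with $K=2c_1$ at $y_0$, and then split on the size of $|\xi|$ (equivalently $\phi^-_{B_r}(|\xi|)$) to convert from $\intA$ to $A$. The only cosmetic differences are that in the small-$|\xi|$ range of (1) you use \inc{p-1} and \azero{} of $\phi'$ directly where the paper goes through $(\phi^*)^{-1}$, and that for the range $(|B_r|^{-1+\epsilon},|B_r|^{-1}]$ in (2) you invoke \aone{} for $\phi$ via Proposition~\ref{prop:Aphiaone} whereas the paper observes that the constant in \wVA{} is $\epsilon$-independent and lets $\epsilon'\to 0$---these are the same mechanism phrased two ways.
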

\begin{proof}
Suppose that $\xi\in\Rn$ satisfies $\phi_{B_r}^-(|\xi|)\le |B_r|^{-1+\epsilon}$. Choose $x'\in B_r$ with $\phi(x',|\xi|) \le 2\phi_{B_r}^-(|\xi|)$ so that 
$\phi(x',|\xi|)\le 2|B_r|^{-1+\epsilon}$. 
This together with $\intA\approx \phi$  \eqref{phiAequiv} yields 
\[
|\intA(x',\xi)| \le c_1 \phi(x', |\xi|)\le 2c_1|B_r|^{-1+\epsilon}.
\]
Therefore, by \wVA{} with $K=2c_1$ we have that for any $x,y\in B_r$
\begin{equation}\label{prop:AphiVA-pf1}\begin{split}
&|\intA(x,\xi)-\intA(y,\xi)| \\
&\qquad \le |\intA(x,\xi)-\intA(x',\xi)|+|\intA(x',\xi)-\intA(y,\xi)| \\
&\qquad \le 2\bL_{2c_1} \omega(r)(|\intA(x',\xi)|+1)\approx \omega(r)\left( \phi(x',|\xi|)+1\right) 
\approx \omega(r)( \phi^-_{B_r}(|\xi|)+1)
\end{split}
\end{equation}
and, using $\intA\approx \phi$,
\begin{equation}\label{prop:AphiVA-pf2}
\phi(x,|\xi|) \lesssim |\intA(x,\xi)-\intA(x',\xi)| + \phi(x',|\xi|)  \lesssim \omega(r)( \phi^-_{B_r}(|\xi|)+1)+\phi^-_{B_r}(|\xi|).
\end{equation}

When also $\phi_{B_r}^-(|\xi|)\ge \omega(r)\in [0,1]$, we can continue this by 
\[
\phi(x,|\xi|) 
\lesssim 
\omega(r)( \phi^-_{B_r}(|\xi|)+1)+\phi^-_{B_r}(|\xi|)
\le
2\phi^-_{B_r}(|\xi|)+\omega(r)
\le
3\phi^-_{B_r}(|\xi|).
\]
Since the implicit constant in the preceding inequality is independent of $\epsilon$, we obtain the inequality in (2) for $\xi$ with $\phi_{B_r}^-(|\xi|)\in [\omega(r), |B_r|^{-1})$. Moreover, the continuity of $\phi$ in the $t$ variable implies the same inequality for $\xi$ with $\phi_{B_r}^-(|\xi|)= |B_r|^{-1}$. This completes the proof of claim (2). 

Still considering $\phi_{B_r}^-(|\xi|)\ge \omega(r)\in [0,1]$, we move on to prove (1).  
Since $\phi$ satisfies 
\azero{} and \inc{p}, we conclude that $|\xi|\gtrsim \omega(r)^{1/p}$. 
Dividing \eqref{prop:AphiVA-pf1} by $|\xi|$, we obtain 
\[\begin{split}
|A(x,\xi)-A(y,\xi)| 
\lesssim 
\omega(r)( \phi^-_{B_r}(|\xi|)+1) |\xi|^{-1}
&\le
\omega(r)( (\phi')^-_{B_r}(|\xi|)+\omega(r)^{-1/p})\\
&\le
\omega(r)^{\frac{1}{p'}}( (\phi')^-_{B_r}(|\xi|)+1), 
\end{split}\]
where we used that $t\phi'(x,t)\approx \phi(x,t)$. 

It remains to prove (1) when $\phi_{B_r}^-(|\xi|)< \omega(r)$. By \eqref{prop:AphiVA-pf2}, it follows that $\phi_{B_r}^+(|\xi|)\lesssim \omega(r)$. Hence applying Proposition~\ref{prop0}(4), for any $x\in B_r$,
\[
|A(x,\xi)| \approx \phi'(x,|\xi|) = (\phi^*)^{-1}(x,\phi^*(x,\phi'(x,|\xi|))) \lesssim (\phi^*)^{-1}( x,\omega(r))  \lesssim \omega(r)^{\frac{1}{p'}}
\]
and so (1) follows in this case also. 
\end{proof}

Let $F:\Omega\times \Rn \to [0,\infty)$ satisfy Assumption~\ref{ass:F} and 
$\phi\in\Phic(\Omega)$ be its growth function. By \eqref{phiAequiv} with $A=D_\xi F$,
\begin{equation}\label{phifequiv}
F(x,\xi) = \int_0^1 D_\xi F(x,t\xi)\cdot \xi\,dx \approx \phi(x,|\xi|) \quad \text{for all $x\in\Omega$ and $\xi\in\Rn$,}
\end{equation}
In the same way as Propositions~\ref{prop:Aphiaone} and \ref{prop:AphiVA}, 
we can also prove the following. 

%\begin{proposition}
%Let $F:\Omega\times \Rn \to \R$  be as in Assumption~\ref{ass:F}, and $\phi$ be the growth function of $F$, and $\epsilon\in[0,1)$. Then $F$ satisfies \aone{} if and only if $\phi$ satisfies \aone.
%\end{proposition}

\begin{proposition}\label{prop:fphiVA}
Let $F:\Omega\times \Rn \to \R$ satisfy Assumption~\ref{ass:F} and $\phi$ be its 
growth function. Then $F$ satisfies \aone{} if and only if $\phi$ satisfies \aone{}.

If $F$ satisfies \wVA{}, then for each $\epsilon\in(0,1]$ with $\omega=\omega_\epsilon$ and $r\in(0,1]$,
\begin{enumerate}
\item 
for all $\xi\in\R^n$ satisfying $\phi_{B_r}^-(|\xi|)\in [0, |B_r|^{-1+\epsilon}]$,
\[
F^+_{B_r}(\xi)- F^-_{B_r}(\xi) \le c \omega(r)\big(\phi^-_{B_r}(|\xi|)+1\big)
\]
\item 
for all $t>0$ satisfying $\phi^-_{B_r}(t)\in[\omega(r),|B_r|^{-1}]$,
\[
\phi^+_{B_r}(t) \le c \phi^-_{B_r}(t).
\]
\end{enumerate}
Here the constants  $c>0$ depend on $p$, $q$, $L$ and $\bL_{2c_2}$, where $c_2\ge 1$ is the implicit constant from $\phi \approx F$ \eqref{phifequiv}. 
\end{proposition}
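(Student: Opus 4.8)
The plan is to mimic the proofs of Propositions~\ref{prop:Aphiaone} and \ref{prop:AphiVA}, using $F\approx\phi$ from \eqref{phifequiv} in place of $\intA\approx\phi$ from \eqref{phiAequiv}. For the first assertion, since \aone{} is invariant under the equivalence $\approx$ by \cite[Lemma~4.1.3]{HarH19} (here $F$ and $\phi$ are both scalar-valued, $N=1$, so the equivalence relation for $\Phi$-functions applies directly), $F$ satisfies \aone{} if and only if $\phi$ does; this is immediate from \eqref{phifequiv}.

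\medskip

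For the second assertion, suppose $F$ satisfies \wVA{}; fix $\epsilon\in(0,1]$, $r\in(0,1]$, and $\omega=\omega_\epsilon$. Given $\xi$ with $\phi^-_{B_r}(|\xi|)\le|B_r|^{-1+\epsilon}$, I would pick $x'\in B_r\cap\Omega$ with $\phi(x',|\xi|)\le 2\phi^-_{B_r}(|\xi|)\le 2|B_r|^{-1+\epsilon}$, so that $F(x',\xi)\le c_2\phi(x',|\xi|)\le 2c_2|B_r|^{-1+\epsilon}$ by \eqref{phifequiv}. Then \wVA{} with $K=2c_2$ gives, for any $x,y\in B_r\cap\Omega$, the chain estimate
\[
|F(x,\xi)-F(y,\xi)|\le |F(x,\xi)-F(x',\xi)|+|F(x',\xi)-F(y,\xi)|\le 2\bL_{2c_2}\,\omega(r)\big(F(x',\xi)+1\big),
\]
and since $F(x',\xi)\approx\phi(x',|\xi|)\approx\phi^-_{B_r}(|\xi|)$ this is $\lesssim\omega(r)\big(\phi^-_{B_r}(|\xi|)+1\big)$, which yields claim (1) after taking the supremum over $x$ and infimum over $y$. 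For claim (2), the same argument bounds $\phi(x,|\xi|)\lesssim |F(x,\xi)-F(x',\xi)|+\phi(x',|\xi|)\lesssim\omega(r)\big(\phi^-_{B_r}(|\xi|)+1\big)+\phi^-_{B_r}(|\xi|)$ for all $x\in B_r\cap\Omega$; when additionally $\phi^-_{B_r}(|\xi|)\ge\omega(r)$, this collapses to $\phi(x,|\xi|)\lesssim\phi^-_{B_r}(|\xi|)$ with a constant independent of $\epsilon$, giving $\phi^+_{B_r}(t)\le c\,\phi^-_{B_r}(t)$ for $t$ with $\phi^-_{B_r}(t)\in[\omega(r),|B_r|^{-1})$, and the continuity of $t\mapsto\phi(x,t)$ extends this to the endpoint $\phi^-_{B_r}(t)=|B_r|^{-1}$.

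\medskip

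The proof is essentially routine once the analogy with Proposition~\ref{prop:AphiVA} is set up, and is actually \emph{simpler}, because there is no division by $|\xi|$: the right-hand side in the definition of \wVA{} for $F$ directly produces $\phi^-_{B_r}(|\xi|)+1$ (comparable to what we want) rather than $\phi^-_{B_r}(|\xi|)/|\xi|$, so the $\omega(r)^{1/p'}$ loss and the separate case analysis for small $\xi$ that appear in Proposition~\ref{prop:AphiVA}(1) are not needed here — claim (1) retains the full factor $\omega(r)$. The only point requiring a little care is the bookkeeping of the implicit constants: one must track that the constant $c$ depends only on $p$, $q$, $L$ (through $c_2$ and through the constants in \azero{}, \inc{p}, \dec{q_1} for $\phi$) and on $\bL_{2c_2}$, which is exactly the stated dependence. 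I do not anticipate a genuine obstacle; the main thing to get right is simply to state the two displayed inequalities without the spurious $|\xi|^{-1}$ factors and to note explicitly why the constant in (2) is independent of $\epsilon$ (so that letting $\epsilon\to0$ later remains harmless).
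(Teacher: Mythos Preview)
Your proposal is correct and follows exactly the approach the paper intends: the paper itself omits the proof, saying only ``In the same way as Propositions~\ref{prop:Aphiaone} and \ref{prop:AphiVA}, we can also prove the following,'' and your argument is precisely that transcription with $F$ and \eqref{phifequiv} in place of $\intA$ and \eqref{phiAequiv}. Your observation that the proof is in fact simpler than that of Proposition~\ref{prop:AphiVA}---no division by $|\xi|$, no $\omega(r)^{1/p'}$ loss, no separate small-$|\xi|$ case---is accurate and worth noting.
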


Finally, we consider the relation between \wVA{} of $A$ and $F$. Based on this result we can say that 
the condition \wVA{} is weaker in the minimization case than in the PDE case.

\begin{proposition}\label{prop:hphf}
Let $F:\Omega\times \Rn \to \R$ satisfy Assumption~\ref{ass:F} 
and set $A^{(-1)}(x,\xi):=|\xi| D_\xi F(x,\xi)$. If $\intA$ satisfies \wVA{}, then so does $F$ with the same $\omega$.
\end{proposition}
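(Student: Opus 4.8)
The plan is to derive \wVA{} for $F$ from \wVA{} for $\intA = A^{(-1)}$ by means of the integral representation of $F$ in terms of $A=D_\xi F$. By Assumption~\ref{ass:F}(1), $F(x,0)=0$ and $F(x,\cdot)\in C^1(\Rn)$, so (as in \eqref{phifequiv}) $F(x,\xi)=\int_0^1 A(x,t\xi)\cdot\xi\,dt$, hence
\[
F(x,\xi)-F(y,\xi)=\int_0^1\bigl(A(x,t\xi)-A(y,t\xi)\bigr)\cdot\xi\,dt .
\]
For $t>0$ one has $A(x,t\xi)\cdot\xi=\tfrac1{t|\xi|}\,\intA(x,t\xi)\cdot\xi$, so the integrand is dominated by $\tfrac1t|\intA(x,t\xi)-\intA(y,t\xi)|$, and substituting $s=t|\xi|$ and writing $e=\xi/|\xi|$,
\[
|F(x,\xi)-F(y,\xi)|\le\int_0^{|\xi|}\frac{|\intA(x,se)-\intA(y,se)|}{s}\,ds .
\]
To apply \wVA{} for $\intA$ at the points $se$, observe that if $F(y,\xi)\le K|B_r|^{-1+\epsilon}$ then, using $\phi\approx F$, $\phi\approx\intA$ and the monotonicity of $t\mapsto\phi(y,t)$, $|\intA(y,se)|\lesssim\phi(y,s)\le\phi(y,|\xi|)\approx F(y,\xi)\le K|B_r|^{-1+\epsilon}$ for every $s\in(0,|\xi|]$; thus $|\intA(y,se)|\le K'|B_r|^{-1+\epsilon}$ with $K'$ depending only on $K$, $c_1$, $c_2$, and \wVA{} for $\intA$ gives, with the same modulus $\omega=\omega_\epsilon$, the pointwise bound $|\intA(x,se)-\intA(y,se)|\le\bL_{K'}\,\omega(r)\bigl(|\intA(y,se)|+1\bigr)$ for all $s\in(0,|\xi|]$.

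Feeding this in, the part coming from $|\intA(y,se)|$ is harmless: since $\phi$ satisfies \inc{p} we have $\int_0^{|\xi|}\tfrac{\phi(y,s)}{s}\,ds\lesssim\phi(y,|\xi|)$, so $\omega(r)\int_0^{|\xi|}\tfrac{|\intA(y,se)|}{s}\,ds\lesssim\omega(r)\,\phi(y,|\xi|)\lesssim\omega(r)F(y,\xi)$. The trouble is the leftover term $\omega(r)\int_0^{|\xi|}\tfrac{ds}{s}$, which diverges at $0$: the crude \wVA{}-bound does not register that $\intA(x,se)$ and $\intA(y,se)$ both vanish as $s\to0$. To repair this I would split the $s$-integral at $s=1$. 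On $[1,|\xi|]$ (present only if $|\xi|\ge1$) one has $|\intA(y,se)|\gtrsim\phi(y,s)\gtrsim\phi(y,1)\gtrsim1$ by \azero{}, so the ``$+1$'' is absorbed into $|\intA(y,se)|$ and this range is again of the type just handled. On $(0,\min\{1,|\xi|\}]$ one additionally has, by \azero{} and \inc{p}, $|\intA(z,se)|\lesssim\phi(z,s)\le Ls^{p}$, whence $|\intA(x,se)-\intA(y,se)|\lesssim s^{p}$, a bound which \emph{is} integrable against $\tfrac1s$; taking on each scale the smaller of this and the \wVA{}-bound keeps the whole integral finite.

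The heart of the matter—and the step I expect to be the main obstacle—is to make this last estimate on $(0,\min\{1,|\xi|\}]$ land exactly on $\bL\,\omega(r)\bigl(F(y,\xi)+1\bigr)$, i.e.\ with the \emph{same} modulus $\omega$: one must balance the crossover scale $s^{p}\approx\omega(r)$ carefully and exploit the precise \inc{p} and \dec{q_1} growth of $\phi$ (so that $F(y,\xi)+1$ genuinely dominates the residual contribution of the ``$+1$'') rather than accepting a logarithmic loss. Once the $s$-integral over $(0,|\xi|]$ is bounded by a constant times $\omega(r)\bigl(F(y,\xi)+1\bigr)$, collecting the two ranges shows that $F$ satisfies \wVA{} with the same $\omega$. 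Finally, if $\intA$ satisfies \VA{} (resp.\ \aone{}) in place of \wVA{} the same computation applies verbatim, the only change being the regularity of the modulus of continuity.
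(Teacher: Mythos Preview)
Your approach coincides with the paper's: both write $F(x,\xi)-F(y,\xi)=\int_0^{|\xi|}(A(x,te)-A(y,te))\cdot e\,dt$ with $e=\xi/|\xi|$ and then invoke \wVA{} of $\intA$ on the integrand. You are in fact more careful than the paper here. Dividing the \wVA{} inequality $|\intA(x,te)-\intA(y,te)|\le\bL\,\omega(r)\bigl(|\intA(y,te)|+1\bigr)$ by $|te|=t$ produces $|A(x,te)-A(y,te)|\le\bL\,\omega(r)\bigl(|A(y,te)|+\tfrac1t\bigr)$, and the $\tfrac1t$ is exactly the non-integrable term you worry about. The paper's displayed chain simply writes $|A(y,te)|+1$ in place of $|A(y,te)|+\tfrac1t$ and integrates to $\omega(r)(\phi(y,|\xi|)+|\xi|)$; for $t<1$ this is not what \wVA{} of $\intA$ actually delivers, so the difficulty you flag is glossed over in the paper's own argument.

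That said, your proposed repair does not close the gap either. Taking, on $(0,\min\{1,|\xi|\}]$, the pointwise minimum of the \wVA{}-bound and the crude bound $|A(x,se)-A(y,se)|\lesssim s^{p-1}$ forces a crossover at $s_*\approx\omega(r)^{1/p}$: the piece $\int_0^{s_*}s^{p-1}\,ds\approx\omega(r)$ is fine, but $\int_{s_*}^{\min\{1,|\xi|\}}\omega(r)\,\tfrac{ds}{s}\approx\tfrac1p\,\omega(r)\ln\tfrac1{\omega(r)}$, and no sharper pointwise estimate is available from the hypotheses alone. So what the argument (yours, and the paper's once the $\tfrac1t$ is restored) actually proves is \wVA{} for $F$ with modulus $\tilde\omega(r)\approx\omega(r)\bigl(1+\ln\tfrac1{\omega(r)}\bigr)$ rather than literally the same $\omega$. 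This is harmless for the paper's applications---$\tilde\omega$ is still (after passing to its concave envelope) a modulus of continuity, and $\omega(r)\lesssim r^{\beta}$ implies $\tilde\omega(r)\lesssim r^{\beta'}$ for every $\beta'<\beta$, so the comparison between Theorems~\ref{thm:PDE} and~\ref{thm:functional} and the $C^{1,\alpha}$ conclusion are unaffected---but the phrase ``with the same $\omega$'' in the statement appears to require this relaxation, and neither your sketch nor the paper's proof supplies a mechanism to avoid the logarithm.
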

\begin{proof}
Let $\phi$ be a growth function of $F$. Fix any $B_r\subset \Rn$ with $r\le 1$.
Suppose $F^-_{B_r}(\xi)\in[0, K|B_r|^{-1+\epsilon}]$. Since
$|\intA(y,\xi)|\le c_1\phi(y,|\xi|) \le c_1c_2 F(y,\xi) $
for all $\xi\in \Rn$ by $\phi\approx \intA$ \eqref{phiAequiv} and $\phi\approx F$ \eqref{phifequiv} with implicit constants $c_1$ and $c_2$, respectively, we obtain
\[
|\intA(y,\xi)|\in[0,Kc_1c_2 |B_r|^{-1+\epsilon}].
\] 
Thus we can use \wVA{} of $\intA$. Define $e=\frac{\xi}{|\xi|}$. Then
\[\begin{split}
F(x,\xi)-F(y,\xi) 
&= \int_0^{|\xi|} (A(x,te)-A(y,te))\cdot e\, dt
 \le \bL_{Kc_1c_2}\omega(r)\int_0^{|\xi|} \big[|A(y,te)|+1\big]\, dt \\
& \lesssim \omega(r)\int_{0}^{|\xi|} [\phi'(y,t)+1]\, dt 
= \omega(r)(\phi(y,|\xi|)+|\xi|)\\
&\lesssim \omega(r) (\phi(y,|\xi|)+1)\approx \omega(r) (F(y,\xi)+1)
\end{split} \]
for any $x,\tilde x \in B_r\cap \Omega$, which is \wVA{} of $F$.
\end{proof}

%%%%%%%%%%%%%%%%%%%%%%%%%%%%%%%%%%%%%%%%%%%%%%%%%%%%%%%%%%%%%%%%%%%%%%%%%%%%
%%%%%%%%%%%%%%%%%%%%%%%%%%%%%%%%%%%%%%%%%%%%%%%%%%%%%%%%%%%%%%%%%%%%%%%%%%%%
%%%%%%%%%%%%%%%%%%%%%%%%%%%%%%%%%%%%%%%%%%%%%%%%%%%%%%%%%%%%%%%%%%%%%%%%%%%%

\section{Auxiliary regularity results}\label{sect:auxiliary}

In this section we collect results on two types of approximating problems, namely 
non-autonomous problems with Uhlenbeck structure and autonomous problems without Uhlenbeck structure.

%%%%%%%%%%%%%%%%%%%%%%%%%%%%%%%%%%%%%%%%%%%%%%%%%%%%%%%%%%%%%%%%%%%%%
\subsection{Quasiminimizers}\label{subsect:quasiminimzer}
For $\psi\in\Phiw(\Omega)$, we say that $u\in W^{1,\psi}_{\loc}(\Omega)$ is a local \textit{$\psi$-quasi\-mini\-mizer} if there exists $Q\ge 1$ 
such that 
\[%begin{equation}\label{def:quasiminimizer}
\int_{\supp\,(u-v)} \psi(x,|Du|)\,dx \le Q \int_{\supp\,(u-v)} \psi(x,|Dv|)\,dx 
\]%end{equation}
for all $v\in W^{1,\psi}_{\loc}(\Omega)$ with $\supp\,(u-v)\Subset\Omega$. 
Quasiminimizers of energy functionals with generalized Orlicz growth 
have been studied e.g.\ in \cite{BenHHK21, BenK20, HarHL21, HarHK18}.

Let $A:\Omega\times\Rn\to\Rn$ satisfy Assumption~\ref{ass:A} and $\phi$ be its growth function.
%If $\phi$ is a growth function of
%$A$, then $\phi\approx \psi$ for 
%\begin{equation}\label{phiAweak}
%\psi(x,t):= \sup_{|\xi|=t}|\xi|\,|A(x,\xi)|.
%\end{equation}
%Then the assumption implies that $\psi$ satisfies \azero{}, \ainc{p} and \adec{q}, and $A^{(-1)}\approx \psi$ \eqref{phiAequiv}.  
Let $u\in W^{1,1}_{\loc}(\Omega)$ be a local weak solution to \eqref{mainPDE}. 
By 
$\phi\approx \intA$ \eqref{phiAequiv} and the assumption $|\intA(\cdot,Du)| \in L^1_{\loc}(\Omega)$ we see that $u \in W^{1,\phi}_{\loc}(\Omega)$.
Assuming density, we obtain by approximation that 
\[
\int_\Omega A(x,Du)\cdot D\zeta \,dx =0 \quad \text{for every }\ \zeta\in W^{1,\phi}(\Omega) \ \text{ with }\ \mathrm{supp}\,(\zeta)\Subset \Omega.
\]
Using $\phi\approx \intA$, the identity above 
with $\zeta=u-v$, 
Young's inequality and Proposition~\ref{prop0}(4), we have 
\[
\begin{split}
\int_{\supp\,(u-v)} \phi(x,|Du|)\,dx & \lesssim \int_{\supp\,(u-v)} A(x,Du)\cdot Du\,dx 
= 
\int_{\supp\,(u-v)} A(x,Du)\cdot Dv\,dx \\
& \lesssim 
\int_{\supp\,(u-v)} \frac{\phi(x,|Du|)}{|Du|} |Dv| \,dx \\
& \lesssim 
\epsilon \int_{\supp\,(u-v)} \phi(x,|Du|)\,dx +\epsilon^{1-q}\int_{\supp\,(u-v)} \phi(x,|Dv|)\,dx
\end{split}
\]
for every $\epsilon\in(0,1]$ and $v\in W^{1,\phi}(\Omega)$ with $\supp\,(u-v) \Subset\Omega$. 
Thus
\[
\int_{\supp\,(u-v)} \phi(x,|Du|)\,dx \le Q \int_{\supp\,(u-v)} \phi(x,|Dv|)\,dx 
\]
for some $Q\ge 1$ depending only on $p$, $q$ and $L$ so that $u$ is a $\phi$-quasiminimizer. 
Note that the density of smooth functions holds if $\phi$ satisfies \azero{}, \aone{} and \adec{} 
\cite[Theorem~6.4.7]{HarH19}. 

Thus we can apply the regularity results for quasiminimizers in \cite{HarHL21} to a local weak solution 
to \eqref{mainPDE}. In particular, we have the following H\"older regularity and higher integrability results for \eqref{mainPDE}. For the last estimate, we refer to \cite[Theorem~4.7]{HasO22}.

\begin{theorem}[\cite{HarHL21}]\label{thm:holder} 
Let $A:\Omega\times \Rn\to\Rn$ satisfy Assumption~\ref{ass:A} with constants 
$L\ge 1$ and $1<p \le q$ and have growth function $\phi\in \Phic(\Omega)$ and let $u\in W^{1,1}_\loc(\Omega)$ be a local weak solution 
to \eqref{mainPDE}. If $A^{(-1)}$ satisfies \aone{} with constant $\bL_K>0$,
then
\begin{itemize}
\item $u\in C^{\alpha}_{\loc}(\Omega)$ for some $\alpha=\alpha(n,p,q,L,\bL_1)\in(0,1)$;
\item $A^{(-1)}(\cdot, Du)\approx \phi(\cdot,|Du|)\in L^{1+\sigma}_{\loc}(\Omega)$ for some $\sigma=\sigma(n,p,q,L,\bL_1)>0$. 
Moreover, for some $c=c(n,p,q,L,\bL_1)>0$ we have 
\begin{equation*}%\label{revestimate}
\left(\fint_{B_r} \phi(x,|Du|)^{1+\sigma}\,dx\right)^{\frac{1}{1+\sigma}} 
\le
 c\left(\phi_{B_{2r}}^-\left(\fint_{B_{2r}} |Du|\,dx\right) +1 \right)
\end{equation*}
whenever $B_{2r}\Subset\Omega$ with $|B_{2r}|\le 1$ and $\|Du\|_{L^\phi(B_{2r})}\le1$.
\end{itemize}
\end{theorem}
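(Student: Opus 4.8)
The plan is to derive the theorem from the $\phi$-quasiminimizer reduction displayed just above, combined with the generalized Orlicz regularity theory of \cite{HarHL21}. First I would verify that the growth function $\phi$ produced by Proposition~\ref{prop:growthfunction} is an admissible input for those results: it lies in $\Phic(\Omega)$ and satisfies \azero{}, \ainc{p} and \adec{q_1} with constants controlled by $n,p,q,L$, and by Proposition~\ref{prop:Aphiaone} the hypothesis that $\intA$ satisfies \aone{} passes to $\phi$; moreover, on the bounded (hence quasiconvex) domain $\Omega$ and under \adec{}, the \aone{} constant may be taken to be $\bL_1$ by the chaining argument recalled after the definition of \wVA{} (cf.\ \cite[Lemma~3.3]{HarH19b}). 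In particular $\phi$ satisfies \azero{}, \aone{} and \adec{}, so smooth functions are dense in $W^{1,\phi}(\Omega)$ by \cite[Theorem~6.4.7]{HarH19}, which legitimizes testing the weak form of \eqref{mainPDE} against any $\zeta\in W^{1,\phi}(\Omega)$ with $\supp\zeta\Subset\Omega$.

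Next I would carry out the computation already sketched above: choosing $\zeta=u-v$, using the coercivity $A(x,Du)\cdot Du\gtrsim\phi(x,|Du|)$ and the growth $|A(x,Du)|\lesssim\phi(x,|Du|)/|Du|$ coming from $\phi\approx\intA$ \eqref{phiAequiv}, then Young's inequality in the $\epsilon$-weighted form together with absorption, to conclude
\[
\int_{\supp(u-v)}\phi(x,|Du|)\,dx\le Q\int_{\supp(u-v)}\phi(x,|Dv|)\,dx
\]
for every admissible $v$, with $Q=Q(n,p,q,L)$. Hence $u$ is a local $\phi$-quasiminimizer with a universal quasiminimality constant.

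With this reduction in place I would invoke \cite{HarHL21}: a local $\phi$-quasiminimizer, for $\phi$ satisfying \azero{}, \aone{}, \ainc{p}, \adec{q_1}, is locally Hölder continuous with some exponent $\alpha=\alpha(n,p,q,L,\bL_1)\in(0,1)$ and enjoys a Gehring-type self-improvement $\phi(\cdot,|Du|)\in L^{1+\sigma}_{\loc}(\Omega)$ for some $\sigma=\sigma(n,p,q,L,\bL_1)>0$. Transporting both statements through the pointwise equivalence $\intA(\cdot,Du)\approx\phi(\cdot,|Du|)$ yields the two displayed conclusions for $\intA$. For the precise reverse-Hölder bound with right-hand side $\phi^-_{B_{2r}}\big(\fint_{B_{2r}}|Du|\,dx\big)+1$, I would cite \cite[Theorem~4.7]{HasO22}; the mechanism there is to refine the Gehring estimate by replacing $\phi(x,\cdot)$ with $\phi^-_{B_{2r}}(\cdot)$ on the small ball $B_{2r}$ via the \aone{}-type comparison on small balls (the $\omega\equiv1$ analogue of Proposition~\ref{prop:AphiVA}(2)) and then a Sobolev--Poincaré-type step to bring in the average of $|Du|$.

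The step that needs the most care, rather than any deep obstacle, is the uniformity of the constants: that $\alpha$ and $\sigma$ depend only on $n,p,q,L,\bL_1$ and not on $u$. This is ensured by the universality of the quasiminimality constant $Q$, by the equivalence $\intA\approx\phi$ being quantitative with constant depending only on $p,q,L$, and by the reduction of the \aone{} constant of $\intA$ to $\bL_1$. Everything else is bookkeeping with the equivalences recorded in Section~\ref{sect:constructionphi}, and I do not expect it to present a genuine difficulty.
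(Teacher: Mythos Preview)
Your proposal is correct and follows essentially the same route as the paper: the paragraph immediately preceding the theorem reduces a local weak solution to a $\phi$-quasiminimizer (via the coercivity/growth from \eqref{phiAequiv}, Young's inequality and absorption), and then the paper simply cites \cite{HarHL21} for the H\"older and higher-integrability conclusions and \cite[Theorem~4.7]{HasO22} for the displayed reverse-H\"older estimate, exactly as you outline. Your additional remarks on density, the passage of \aone{} from $\intA$ to $\phi$ via Proposition~\ref{prop:Aphiaone}, and the uniformity of constants are all consistent with how the paper sets things up.
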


\begin{remark}\label{rmk:holder_functional}
The above theorem still holds in the setting of the minimization problem as in Theorem~\ref{thm:functional}, where we assume that $F$ satisfies \aone{}. 
\end{remark}

\begin{remark}\label{rmk:phipsi}
Assumptions~\ref{ass:A} and \ref{ass:F} involve the derivative of $A$ or the second derivative of 
$F$. In fact, these are only needed for the maximal regularity. For the previous theorem, 
Assumptions~\ref{ass:A} could be replaced by weaker assumptions, such as 
\begin{enumerate}
\item
For every $x\in \Omega$, $A(x, 0)= 0$, $A(x,\cdot)\in C^{0}(\R^n\setminus\{ 0\}; \R^n)$
and for every $\xi\in \R^n$, $A(\cdot,\xi )$ is measurable.
%
%$A(x, 0)= 0$ for all $x\in\Omega$, $A(\cdot,\xi )$ is measurable and 
%$A(x,\cdot)\in C^{0}(\R^n\setminus\{ 0\}; \Rn)$.
\item 
There exist $L\ge 1$ and $1<p<q$ such that the radial function 
$t\mapsto |A(x,te)|$ satisfies \azero{}, \ainc{p-1} and \adec{q-1}
for every $x\in \Omega$ and $e\in \Rn$ with $|e|=1$. %(\textit{Weak $(p,q)$-growth}) 
\item 
There exists $L\ge 1$ such that 
\[
|\xi'|\,| A(x,\xi ')|
\le
L\,  A(x,\xi ) \cdot \xi 
\]
for all  $x\in\Omega$ and $\xi,\xi' \in \Rn$ with $|\xi|=|\xi'|$. %(\textit{Quasi-isotropic coercivity})
\end{enumerate}
In this case the growth function $\phi$ is replaced by
\[
\psi(x,t):= \sup_{|\xi|=t}|\xi|\,|A(x,\xi)|.
\]
\end{remark}

%%%%%%%%%%%%%%%%%%%%%%%%%%%%%%%%%%%%%%%%%%%%%%%%%%%%%%%%%%%%%%%%%%%%%
\subsection{Regularity results for autonomous problems}
We consider Assumption~\ref{ass:A} for an autonomous function $\bA:\Rn\to \Rn$ 
via its trivial extension $\bA(x,\xi):= \bA(\xi)$. We may apply Proposition~\ref{prop:growthfunction}  to conclude that such $\bA$ has a growth function $\bphi\in \Phic(\Omega)$. 
An inspection of the proof of Proposition~\ref{prop:growthfunction} shows that this $\bphi$ can be chosen autonomous as well, 
and we will assume hereafter that $\bphi\in \Phic$, i.e., $\bphi= \bphi(t)$. 
Moreover, in view of Remark~\ref{rmk-growth}, we also  assume  that $\bphi\in C^1([0,\infty))\cap C^2((0,\infty))$.

%In this subsection, we consider 
With these $\bA$ and $\bphi$, we present regularity results of 
weak solutions to the following autonomous problem
\begin{equation}\label{eqA0}
\tag{{\(\div \bA\)}}
\mathrm{div}\, \bA (D\bu)=0 \quad \text{in }\ B_r,
\end{equation}
The following results are variations of known regularity results for equations with Orlicz growth, and the proofs are similar to previous ones. In particular, in the Uhlenbeck case $\bA(\xi)=\frac{\bphi'(|\xi|)}{\xi}\xi$, we considered them in \cite{HasO22}. Therefore, we outline the proofs and point out 
differences compared with the references.
The first result is local $C^{1,\alpha}$-regularity.

%The corresponding result for the Uhlenbeck case $\bA(\xi)=\bA(|\xi|)$ was considered in \cite[Lemma~4.12]{HasO22}.

\begin{lemma}\label{lem:holder}
Let $\bA:\R^n\to\R^n$ satisfy Assumption~\ref{ass:A} with $A(x,\xi)\equiv \bA(\xi)$ and constants $L\ge 1$ and $1<p\le q$, and $\bphi\in \Phic$ be its growth function. 
If $\bu{}\in W^{1,\bphi}(B_r)$ is a weak solution to \eqref{eqA0}, then 
$D\bu\in C^{0,\bar\alpha}_{\loc}(B_r,\Rn)$ for some $\bar\alpha\in (0,1)$ with the following estimates: for any $B_\rho\subset B_r$ and any $\tau\in(0,1)$,
\begin{equation*}%\label{Lip}
\sup_{B_{\rho/2}} |D\bu|\leq c\fint_{B_\rho} |D\bu|\, dx,
\end{equation*} 
%for any $B_\rho\subset B_r$, and
\begin{equation*}%\label{C1alpha0}
\fint_{B_{\tau \rho}}\Big|D\bu(x)-\fint_{B_{\tau \rho}} D\bu(y)\, dy \Big|\,dx 
\leq 
c \tau^{\bar\alpha}\fint_{B_\rho}|D\bu|\,dx.
\end{equation*}
%for any $\tau\in(0,1)$. 
Here $\bar\alpha\in(0,1)$ and $c>0$ depend only on the constants $n$, $p$, $q$ and $L$.
\end{lemma}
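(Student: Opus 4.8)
The plan is to reduce the statement to known interior gradient regularity results for autonomous Orlicz-growth problems, using the fact that $\bA$ satisfies $(p,q)$-growth bounds governed by $\bphi'$ but crucially \emph{lacks} Uhlenbeck structure (the anisotropy is only controlled up to the multiplicative constant $L$ via Assumption~\ref{ass:A}(3)). Since the equation is autonomous, we may use difference-quotient / Caccioppoli arguments together with the quasi-isotropic ellipticity to run a Moser iteration for the Lipschitz bound and a De~Giorgi-type oscillation estimate for the $C^{1,\bar\alpha}$-bound. First I would record the consequences of the growth function: by \eqref{Agrowth} and \eqref{Aellipticity}, $\bA\in C^1(\Rn\setminus\{0\})$ satisfies
\[
|D_\xi \bA(\xi)|\le \Lambda\frac{\bphi'(|\xi|)}{|\xi|},
\qquad
D_\xi\bA(\xi)\tilde\xi\cdot\tilde\xi\ge \nu\frac{\bphi'(|\xi|)}{|\xi|}|\tilde\xi|^2,
\]
and by Remark~\ref{rmk-growth} we may take $\bphi\in C^2((0,\infty))$. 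These are exactly the ellipticity/growth conditions in the Lieberman-type framework, with the key point that $\bphi'$ need not satisfy the usual $\Delta_2$/$\nabla_2$ with exponent one on each side but does satisfy \inc{p-1} and \dec{q_1-1}; all constants in what follows then depend only on $n,p,q,L$.

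Next I would establish the Lipschitz (first) estimate. Differentiating the equation in direction $e_k$ (justified first on difference quotients, then passing to the limit once higher differentiability is known) gives, for $w=D_k\bu$,
\[
\int_{B_r} D_\xi\bA(D\bu)Dw\cdot D\eta\,dx=0.
\]
Testing with $\eta=w\,|w|^{2\beta}\zeta^2$ (or the analogous truncated test function) and using the two-sided bound on $D_\xi\bA(D\bu)$ in terms of $\frac{\bphi'(|D\bu|)}{|D\bu|}$ yields a Caccioppoli inequality for powers of $|D\bu|$; here the ratio $\frac{q_1}{p}$ enters only through fixed constants, not through any smallness requirement, because the problem is autonomous. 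A standard Moser iteration — using the Sobolev inequality and the doubling of $t\mapsto t^2\frac{\bphi'(t)}{t}=t\bphi'(t)\approx\bphi(t)$ implied by \adec{q_1} — then gives
\[
\sup_{B_{\rho/2}}|D\bu|\le c\Big(\fint_{B_\rho}(\bphi(|D\bu|)+1)\,dx\Big)^{1/?}
\]
which one upgrades, by a by-now-routine interpolation argument (exploiting that $\bphi$ is doubling and that we may rescale using the autonomy), to the claimed $L^\infty$–$L^1$ bound $\sup_{B_{\rho/2}}|D\bu|\le c\fint_{B_\rho}|D\bu|\,dx$. This part is essentially \cite[e.g.\ the Lipschitz estimates]{Lie1}; I would cite the corresponding statement and indicate that the only modification is replacing the Uhlenbeck matrix $\frac{\bphi'(|\xi|)}{|\xi|}\big(\mathrm{Id}+(p-2)\frac{\xi\otimes\xi}{|\xi|^2}\big)$ by a general $D_\xi\bA(\xi)$ with the same two-sided bounds.

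For the second (oscillation decay) estimate I would argue on the level of $V(D\bu)$ where $V(\xi):=\sqrt{\bphi'(|\xi|)/|\xi|}\,\xi$, so that $|V(\xi)-V(\tilde\xi)|^2\approx \frac{\bphi'(|\xi|+|\tilde\xi|)}{|\xi|+|\tilde\xi|}|\xi-\tilde\xi|^2$. Having the Lipschitz bound, $|D\bu|$ is locally bounded, so on $B_{\rho/2}$ the coefficients $D_\xi\bA(D\bu)$ are uniformly elliptic and bounded \emph{with constants depending on $\sup|D\bu|$}; one then runs the standard De~Giorgi/Moser oscillation argument for the differentiated equation to get Hölder continuity of $D\bu$, and converts the resulting decay of $\fint_{B_{\tau\rho}}|D\bu-\langle D\bu\rangle_{B_{\tau\rho}}|$ into the stated inequality by absorbing the dependence on $\sup_{B_\rho}|D\bu|$ back into $\fint_{B_\rho}|D\bu|$ via the first estimate (again using autonomy to rescale so that $\fint_{B_\rho}|D\bu|\le 1$, handling the additive $+1$). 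The main obstacle I anticipate is not any single estimate — each is classical in the Orlicz-growth setting — but rather making the \emph{bookkeeping of constants} precise: one must check that nowhere does a hidden constant depend on $\bphi$ itself (only on $p,q_1,L$, hence on $p,q,L$), that the degenerate/singular behaviour at $|D\bu|=0$ is handled by the usual approximation $\bA_\varepsilon$ replacing $\bphi'(t)/t$ by a regularised version with the same \inc{p-1}/\dec{q_1-1}, and that the passage from difference quotients to genuine derivatives of $\bu$ is legitimate under only $W^{1,\bphi}$ regularity. I would therefore organise the write-up as: (i) state the regularised problem and uniform estimates; (ii) Caccioppoli + Moser for the Lipschitz bound; (iii) differentiated equation + De~Giorgi for $C^{1,\bar\alpha}$; (iv) pass $\varepsilon\to0$ and rescale to obtain the two displayed inequalities, citing \cite{Lie1,HasO22} for the Uhlenbeck prototype and noting only the modifications forced by the general matrix $D_\xi\bA$.
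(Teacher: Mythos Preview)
Your plan is correct and follows essentially the same route as the paper: regularise to a non-degenerate problem, obtain uniform $W^{2,2}$/Lipschitz estimates via Caccioppoli--Moser on the differentiated equation, then run the De~Giorgi-type oscillation decay with coefficients $b_{ij}(\xi)=D_\xi\bA_\varepsilon(\xi)\,\tfrac{|\xi|}{\bphi'_\varepsilon(|\xi|)}$, and finally pass $\varepsilon\to0$. The only points the paper makes more explicit are the specific shift-regularisation $\bA_\varepsilon(\xi):=\bA((\tfrac{\varepsilon}{|\xi|}+1)\xi)\tfrac{|\xi|}{\varepsilon+|\xi|}$ with $\bphi'_\varepsilon(t)=\tfrac{\bphi'(\varepsilon+t)}{\varepsilon+t}\,t$ (so that \eqref{Aepsilon-condition} holds with $\varepsilon$-independent constants) and the use of the \emph{uniform} monotonicity \eqref{monotonicity} to upgrade weak to strong convergence $\bu_\varepsilon\to\bu$ in $W^{1,\bphi}$, which you should make sure to invoke in step~(iv).
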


\begin{proof}[Outline of proof]
The proof is almost the same as for the Uhlenbeck case, that is \cite[Lemma~4.12]{HasO22} 
with a proof in \cite[Appendix A]{HasO22}. We also refer to \cite{Le1} for the case $\bphi(t)=t^p$. 
We only need to slightly modify the beginning and approximation parts to the general case $\bA(\xi)$.

Instead of the equation \eqref{eqA0}, we consider approximate non-degenerate equations. For $0<\epsilon<\frac{1}{2}$, define
\[
\bA_\epsilon(\xi):= \bA\big( (\tfrac{\epsilon}{|\xi|}+1)\xi\big)\frac{|\xi|}{\epsilon+|\xi|} 
\quad \text{and}\quad 
\bphi_\epsilon' (t):= \frac{\bphi'(\epsilon+t)}{\epsilon+t}t.
\] 
Note that $\bphi_\epsilon(t) := \int_0^t\bphi'_\epsilon(s)\,ds$ is exactly the same as $\phi_\epsilon(t)$ in \cite[(A.1)]{HasO22}, hence, in view of \cite[Appendix A]{HasO22}, $\bphi_\epsilon$ also satisfies \azero{}, \inc{p-1} and \dec{q_1-1}, $\bphi(t) \lesssim \bphi_\epsilon(t)+1$ and $\bphi_\epsilon(t) \lesssim \bphi(t)+1$. 
In particular, the last two inequalities imply $W^{1,\bphi}(B_r)=W^{1,\bphi_\epsilon}(B_r)$.
Then we can see from \eqref{Agrowth} and \eqref{Aellipticity} with $A=\bA$ and $\phi=\bphi$ that  there exist small $\epsilon_0>0$ and $0< \bar \nu \le \bar \Lambda$ depending only on $n$, $p$, $q$, $\nu$ and $\Lambda$ such that for every $\epsilon\in[0,\epsilon_0)$
 \begin{equation}\label{Aepsilon-condition}
 |\bA_\epsilon(\xi)|+ |\xi|\,|D_\xi \bA_\epsilon(\xi)|\le \bar\Lambda \bphi_\epsilon'(|\xi|)
 \quad\text{and}\quad 
D_\xi \bA_\epsilon(\xi)\tilde \xi \cdot \tilde \xi \ge \bar\nu \frac{\bphi_\epsilon'(|\xi|)}{|\xi|}|\tilde \xi|^2
 \end{equation}
for all $\xi,\tilde\xi\in\Rn$.
Moreover, by \eqref{Agrowth},  
\begin{equation}\label{Aepsilon-convergence}
\bA_\epsilon(Dg) \ \ \longrightarrow\ \ \bA(Dg) \quad \text{in }\ L^{\bphi^*}(B_r) \ \ \text{as}\ \ \epsilon\to 0
\end{equation}
for every $g\in W^{1,\bphi}(B_r)$. 

Let $\bu_\epsilon\in W^{1,\bphi}(B_r)$ be the unique weak solution to
\[
\div \bA_\epsilon(D\bu_\epsilon) =0 \quad \text{in }\ B_r 
\quad \text{and} \quad \bu_\epsilon =\bu \quad \text{on }\ \partial B_r.
\]
Fix any small $0<\epsilon \ll 1$. Then, in the same way as in \cite{DieSV17}, in particular Lemmas 5.7 and 5.8, $u_\epsilon \in W^{2,2}_{\loc}(B_r)$, $\bphi_\epsilon(Du_\epsilon)\in W^{1,2}_\loc(B_r)$ and $u_{\epsilon}\in W^{1,\infty}_{\loc}(B_r)$ with the estimate 
\[
\sup_{B_\rho} \bphi_\epsilon (D\bu_\epsilon) \le c \fint_{B_{2\rho}} \bphi_{\epsilon}(D\bu_{\epsilon})\,dx, \quad B_{2\rho}\subset B_r,
\] 
for some $c>0$ depending only on $n$, $p$, $q_1$, $\bar\nu$ and $\bar\Lambda$. 
Set %$a_{ij}=a_{ij}(D\bu_\epsilon)$ and $b_{ij}=b_{ij}(D\bu_\epsilon)$, where
\[ 
[a_{ij}(\xi)]
:= D_\xi A_\epsilon(\xi) 
\quad \text{and} \quad
[b_{ij}(\xi)]
:= D_\xi A_\epsilon(\xi)\frac{\epsilon+|\xi|}{\phi'(\epsilon+|\xi|)}.
%=a_{ij}(\xi)\frac{\epsilon+|\xi|}{\phi'(\epsilon+|\xi|)} .
\]
Note that, by \eqref{Aepsilon-condition}, $b_{ij}(\xi)$ satisfies
\[
\bar \nu  |\tilde \xi|^2 \le \sum_{i,j=1}^{n}b_{ij}(\xi) \tilde \xi_i \tilde \xi_j
\quad\text{and} \quad 
\big|[b_{ij}(\xi)]\big| \le \bar \Lambda.
\]
Therefore, by following \cite[Appendix A]{HasO22}, in particular from the paragraph containing (A.7), 
with the above setting, we can obtain the desired regularity estimates for $u_\epsilon$, 
where relevant  constants are independent of $\epsilon$.

Then using \eqref{Aepsilon-convergence} and the uniform monotonicity \eqref{monotonicity} with $A=\bA_{\epsilon}$ and $\phi=\bphi_\epsilon$, we see that
\[
\bu_\epsilon \ \ \longrightarrow\ \ \bu \quad \text{in}\ \  W^{1,\bphi}(B_r)  \ \ \text{as}\ \ \epsilon\to 0.
\]
Note that the standard Minty-Browder technique for a monotonicity operator, that is, an operator $A$ which satisfies inequality \eqref{monotonicity} with right hand side $0$, implies only weak convergence, but due to the uniform monotonicity we have the strong convergence. In particular, $D\bu_\epsilon(x)$ converges, up to a subsequence, to $D\bu(x)$ almost everywhere in $B_r$. 
\end{proof}

The next result is a Calder\'on--Zygmund type estimate in the generalized Orlicz space for non-zero boundary data. 
Since $\theta$ is superlinear, this lemma allows us to transfer regularity from $u$ to
$\bu$. 
%It generalizes \cite[Lemma~4.15]{HasO22}. 

\begin{lemma}\label{lem:CZ}
 Let $\bA:\R^n\to\R^n$ satisfy Assumption~\ref{ass:A} with $A(x,\xi)\equiv \bA(\xi)$ and constants $L\ge 1$ and $1<p\le q$, and $\bphi\in \Phic$ be its growth function.  Suppose $\theta\in \Phiw(B_r)$ satisfies
\azero{}, \ainc{p_\theta} and \adec{q_\theta} with constants $L_\theta\ge 1$ and $1<p_\theta\leq q_\theta$ and \aone{} with constant $\bL_K>0$, 
and   $u\in W^{1,\bphi}(B_r)$ satisfies $\int_{B_r}\theta(x,\bphi(|Du|))\,dx\le \kappa$ for some $\kappa>0$.
If $\bu\in u+W^{1,\bphi}_0(B_r)$ is a weak solution to \eqref{eqA0}, then 
\[
\|\bphi(|D\bu|)\|_{L^\theta(B_r)} \leq c\, \|\bphi(|Du|)\|_{L^\theta(B_r)}
\]
and
\begin{equation*}%\label{meanCZestimate}
\fint_{B_r}\theta(x,\bphi(|D\bu|))\,dx 
\leq 
c\big(\kappa^{\frac{q_1}{p_1}-1}+1\big)
\bigg(\fint_{B_r}\theta(x,\bphi(|Du|))\,dx + 1\bigg)
\end{equation*}
for $c=c(n,p,q,L, p_\theta,q_\theta,L_\theta,\bL_1)>0$.
\end{lemma}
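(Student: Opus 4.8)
The plan is to prove a Calderón–Zygmund estimate for the autonomous equation $\div\bA(D\bu)=0$ with boundary datum $u$, measured in the generalized Orlicz space $L^\theta(B_r)$. First I would record the basic energy comparison: testing the equation for $\bu$ with $\bu-u$ (which is admissible since $\bu-u\in W^{1,\bphi}_0(B_r)$), using the coercivity and growth bounds \eqref{Agrowth}, \eqref{Aellipticity} for $\bA$ together with $\bphi\approx \intA$, and Young's inequality in the $\bphi$/$\bphi^*$ pairing, yields
\[
\int_{B_r}\bphi(|D\bu|)\,dx \le c\int_{B_r}\bphi(|Du|)\,dx.
\]
This is the $\theta(t)=t$ case, and it is the seed for the nonlinear estimate.

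The core of the argument is a pointwise (or rather, level-set/maximal-function) comparison estimate on interior balls: on any $B_\rho(x_0)$ with $B_{2\rho}(x_0)\subset B_r$, one compares $\bu$ with the $\bA$-harmonic replacement $h$ having the same boundary values on $\partial B_{2\rho}$, i.e.\ $\div\bA(Dh)=0$ in $B_{2\rho}$ and $h=\bu$ on $\partial B_{2\rho}$. By Lemma~\ref{lem:holder} applied to $h$ (which has Uhlenbeck-type $C^{1,\bar\alpha}$ bounds independent of the datum), $\bphi(|Dh|)$ satisfies a reverse-Hölder / sup estimate, so $\bphi(|Dh|)$ is controlled by its average. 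The difference $\bu-h$ is estimated via the uniform monotonicity \eqref{monotonicity} and the equation for $\bu$, giving a smallness of $\fint_{B_{2\rho}}\bphi(|D\bu-Dh|)$ in terms of $\fint_{B_{2\rho}}\bphi(|Du|)$. Combining, one obtains the key decay estimate: for a.e.\ $x_0$,
\[
\Big(\fint_{B_\rho(x_0)}\bphi(|D\bu|)^{1+\sigma_0}\,dx\Big)^{\frac1{1+\sigma_0}}
\le c\,\fint_{B_{2\rho}(x_0)}\big[\bphi(|D\bu|)+\bphi(|Du|)+1\big]\,dx
\]
for a small universal $\sigma_0>0$; equivalently a good-$\lambda$ or maximal-function inequality $\M(\bphi(|D\bu|))\lesssim (\M(\bphi(|Du|)^{s}))^{1/s}+1$ for some $s<1$. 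This is the standard DiBenedetto–Manfredi / Acerbi–Mingione machinery, already carried out for the Uhlenbeck case in \cite{HasO22} and for Orlicz growth in the cited works; only the growth/ellipticity constants change, so I would invoke it with a reference.

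From the maximal-function estimate, the $L^\theta$-bound follows by the boundedness of the maximal operator on $L^{\theta^{1/s}}$-type spaces: since $\theta$ satisfies \azero{}, \ainc{p_\theta}, \adec{q_\theta} and \aone{}, the function $\theta(\cdot, t^{1/s})$ (or an appropriate rescaling) still satisfies \ainc{} with exponent $>1$ and \aone{}, so $\M$ is bounded on the corresponding generalized Orlicz space (by the results of \cite{Has15}, \cite[Ch.~4]{HarH19}). Applying this to $g:=\bphi(|Du|)+1$ gives $\|\bphi(|D\bu|)\|_{L^\theta(B_r)}\le c\,\|\bphi(|Du|)\|_{L^\theta(B_r)}$ once one absorbs the additive constant and uses the finiteness hypothesis $\int_{B_r}\theta(x,\bphi(|Du|))\,dx\le\kappa$. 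For the second, modular inequality, I would run the homogeneity bookkeeping: \ainc{p_1}/\adec{q_1} for $\bphi$ give that rescaling $Du\mapsto\lambda Du$ changes $\varrho$ by factors between $\lambda^{p_1}$ and $\lambda^{q_1}$, and choosing $\lambda$ so that the modular of the rescaled function is $\le1$ converts the norm inequality into the stated modular inequality with the factor $\kappa^{q_1/p_1-1}+1$ — this is exactly the non-homogeneity loss typical of $(p,q)$-growth, handled as in \cite[Lemma~4.7]{HasO22}.

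The main obstacle is making the comparison estimate robust in the non-Uhlenbeck autonomous setting: $\bA$ is not of gradient form $\bphi'(|\xi|)\xi/|\xi|$, so the DiBenedetto-type Lipschitz and $C^{1,\bar\alpha}$ bounds for $\bA$-harmonic functions must come from Lemma~\ref{lem:holder} (via the frozen, non-degenerate approximation $\bA_\epsilon$ and the $b_{ij}$ whose eigenvalues are pinched between $\bar\nu$ and $\bar\Lambda$ by \eqref{Aepsilon-condition}) rather than from explicit structure; once that $C^{1,\bar\alpha}$ input is granted, the rest of the Calderón–Zygmund scheme is insensitive to the lack of Uhlenbeck structure because everything is phrased through the scalar growth function $\bphi$ and the uniform monotonicity \eqref{monotonicity}. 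A secondary technical point is ensuring the exponent $s<1$ in the reverse-Hölder self-improvement can be taken uniform and small enough that $\theta(\cdot,t^{1/s})$ still has \ainc{} with exponent strictly above $1$ (so that $\M$ is bounded); this is guaranteed because $p_\theta>1$, so shrinking $s$ toward $1$ keeps $p_\theta/s>1$.
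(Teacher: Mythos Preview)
Your overall outline (basic energy comparison, Lipschitz bounds for $\bA$-harmonic functions, Calder\'on--Zygmund machinery, passage to $L^\theta$ via the maximal operator, then scaling to get the modular form) is in the right spirit, but there is a genuine gap.

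First, your interior ``comparison'' is vacuous as written. On any ball $B_{2\rho}(x_0)\subset B_r$ the function $\bu$ is already a weak solution of $\div\bA(D\bu)=0$, so by uniqueness the replacement $h$ that you define (solving $\div\bA(Dh)=0$ in $B_{2\rho}$ with $h=\bu$ on $\partial B_{2\rho}$) is $\bu$ itself, and $D\bu-Dh\equiv 0$. Thus the ``smallness of $\fint\bphi(|D\bu-Dh|)$ in terms of $\fint\bphi(|Du|)$'' carries no information in the interior; what one actually uses there is just the Lipschitz bound $\sup_{B_\rho}\bphi(|D\bu|)\lesssim \fint_{B_{2\rho}}\bphi(|D\bu|)$ from Lemma~\ref{lem:holder}. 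In particular, the datum $Du$ does not enter the interior estimate at all.

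Second --- and this is the real gap --- you never treat balls $B_{2\rho}(x_0)$ that meet $\partial B_r$, which is precisely where the boundary datum $u$ comes in (through $\bu-u\in W^{1,\bphi}_0(B_r)$). For these balls one needs a \emph{boundary} Lipschitz estimate for $\bA$-harmonic functions vanishing on a flat piece of the boundary, i.e.\ for solutions of $\div\bA(D\bv)=0$ in $B_\rho^+$ with $\bv=0$ on $B_\rho\cap\{x_n=0\}$. In the Uhlenbeck case $\bA(\xi)=\bphi'(|\xi|)\xi/|\xi|$ this follows by odd reflection, but for the general quasi-isotropic $\bA$ of Assumption~\ref{ass:A} reflection fails, and one must instead run a barrier argument (as in \cite{Cho18,ColM16}) to obtain
\[
\sup_{B^+_{\rho/2}}\bphi(|D\bv|)\le c\fint_{B^+_\rho}\bphi(|D\bv|)\,dx.
\]
The paper's proof singles this out as the only substantive change from \cite[Lemma~4.15]{HasO22}; your proposal does not mention it. Once this boundary Lipschitz estimate is in hand, either route --- the paper's (global weighted $L^p$ estimates plus extrapolation to $L^\theta$, then scaling) or yours (good-$\lambda$/maximal-function estimates directly in $L^\theta$) --- goes through, and the two are essentially equivalent formulations of the same Calder\'on--Zygmund scheme.
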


\begin{proof}[Outline of proof]
The lemma is a general version of \cite[Lemma~4.15]{HasO22} and the proof is exactly the same if we have the general version of \cite[Theorem B.1]{HasO22}. We note that the main tools used in the proof  are weighted $L^p$ estimates, extrapolation in the generalized Orlicz spaces and scaling.  In particular, \cite[Theorem B.1]{HasO22} is the global weighted $L^p$ estimates for the gradient of weak solution to equation \eqref{eqA0} with $\bA(\xi)=\bphi'(|\xi|)\frac{\xi}{|\xi|}$. 

The general version of  \cite[Theorem B.1]{HasO22} can be proved in a similar way. The major difference is establishing the estimate (see the next paragraph) concerned with the boundary Lipschitz regularity for the following approximate equation:
\[
\div \bA(D\bv)=0 \quad \text{in }\ B_{\rho}^+, \qquad \bv=0 \quad \text{on }\ B_{\rho}(0)\cap \{x_n=0\},
\] 
where $B_{\rho}^+=B_{\rho}(0)\cap\{x_n>0\}$. Other differences are minor modifications. 

For a weak solution $\bv\in W^{1,\bphi}(B_\rho^+)$ to the above equation, one can prove that
\[
\sup_{B^+_{\rho/2}}\bphi(|D\bv|) \le c \fint_{B_\rho^+} \bphi(|D\bv|) \, dx
\]
for some $c>0$ depending only on $n$, $p$, $q_1$, $\nu$ and $\Lambda$. This corresponds to  \cite[(B.13)]{HasO22} in the Uhlenbeck case, which was obtained from the interior counterpart by applying the reflection argument with the odd extension, see the last paragraph in \cite{HasO22}. However, in the general case the reflection argument does not  work, and we use a so-called barrier argument. For the %above 
proof of the above Lipschitz estimate we refer to 
 \cite[Theorem 4.1]{Cho18}, see also \cite[Theorem 2.2]{ColM16}. Note that Cho \cite{Cho18} proved a Calder\'on-Zymund type estimate with $\theta(x,t)\equiv \theta(t)$ for nonhomogeneous equations with the zero boundary condition.        
\end{proof}

%%%%%%%%%%%%%%%%%%%%%%%%%%%%%%%%%%%%%%%%%%%%%%%%%%%%%%%%%%%%%%%%%%%%%%%%
%%%%%%%%%%%%%%%%%%%%%%%%%%%%%%%%%%%%%%%%%%%%%%%%%%%%%%%%%%%%%%%%%%%%%%%%
%%%%%%%%%%%%%%%%%%%%%%%%%%%%%%%%%%%%%%%%%%%%%%%%%%%%%%%%%%%%%%%%%%%%%%%%

\section{Approximation}\label{sect:approx}

In this section we construct approximations for $A$ in \eqref{mainPDE} and $F$ in \eqref{mainfunctional}. We start by recalling the simpler construction for $\phi\in C^{1}([0,\infty))$ with $\phi'$ satisfying    \azero{}, \inc{p-1} and \dec{q_1-1} for some $1<p\le q_1$. Let $B_r=B_r(x_0)\subset\Omega$ and  
$0<t_1<1<t_2$. We define 
\begin{equation}\label{phi0}
\bphi(t):=\int_0^t \bphi'(s)\,ds \quad \text{with}\quad \bphi'(t):=
\begin{cases}
\frac {a_1}{t_1^{p-1}}t^{p-1}&\text{if} \ \ 0\leq t\leq t_1,\\
\phi'(x_0,t)&\text{if}\ \ t_1\leq t\leq t_2,\\
\frac {a_2}{t_2^{p-1}}t^{p-1}&\text{if} \ \ t_2\leq t<\infty,
\end{cases}
\end{equation}
where $a_1:=\phi'(x_0, t_1)$ and $a_2 := \phi'(x_0, t_2)$. 
Note that the relationship between $\phi$ and $\bphi$ is exactly the 
same as in \cite[Section~5]{HasO22}. From there, we have the 
following result.

\begin{proposition}\label{prop:phi0}
Let $\phi$ and $\bphi$ be from \eqref{phi0} and $\tL\ge 1$. Suppose that
\begin{equation*}%\label{eq:aones}
\phi_{B_r}^+(t)\le \tL \phi_{B_r}^-(t) \quad \text{for all }\ t\in [t_1,t_2].
\end{equation*}
Then,
\begin{enumerate}
\item 
$\bphi\in C^1([0,\infty))$ with $\bphi'$ satisfying \inc{p-1} and \dec{q_1-1}.
\item
$\bphi(t)\le \phi(x_0,t)$ and $\tL^{-1} \phi(x,t) \le \bphi(t)$ % \le \frac{q_1}{p}\tL \phi(x,t)$ 
for all $(x,t)\in B_r\times [t_1,t_2]$.
\item
$\bphi(t)\leq \frac{q_1}{p}\tilde L \phi(x,t)$ for all $(x,t)\in B_r\times [t_1,\infty)$.
% and so  $\bphi(t) \le \frac{q_1}{p}\tL \left(\phi(x,t) +1\right)$ for all $(x,t)\in B_r\times[0,\infty)$. 
\item 
$\theta_0(x,t):=\phi(x,\bphi^{-1}(t))$ satisfies \azero, \ainc{1} and \adec{q_1/p} 
with constants $L$ depending only on the structure constants of $\phi$.
\end{enumerate}
\end{proposition}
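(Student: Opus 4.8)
The plan is to read off all four claims directly from the piecewise formula \eqref{phi0}, exploiting that $\bphi'$ equals $\phi'(x_0,\cdot)$ on $[t_1,t_2]$ and is an explicit power of $t$ on the two tails.

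For (1): the choices $a_1=\phi'(x_0,t_1)$, $a_2=\phi'(x_0,t_2)$ make the three pieces of $\bphi'$ match at $t_1$ and $t_2$, so $\bphi'$ is continuous and $\bphi\in C^1([0,\infty))$. To check \inc{p-1}, I would observe that $t\mapsto\bphi'(t)/t^{p-1}$ is constant on $[0,t_1]$ and on $[t_2,\infty)$ and equals the nondecreasing function $\phi'(x_0,t)/t^{p-1}$ on $[t_1,t_2]$; continuity at the two junctions then forces $\bphi'(t)/t^{p-1}$ to be nondecreasing on all of $(0,\infty)$. For \dec{q_1-1} the only extra point is that on the tails $\bphi'(t)/t^{q_1-1}$ is a nonpositive power of $t$ (since $p\le q_1$), hence nonincreasing, while on $[t_1,t_2]$ it is $\phi'(x_0,t)/t^{q_1-1}$, nonincreasing by \dec{q_1-1} of $\phi'$; again one glues by continuity.

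For (2) and (3): since $\bphi'=\phi'(x_0,\cdot)$ on $[t_1,t_2]$, for $t\in[t_1,t_2]$ one has
\[
\bphi(t)-\phi(x_0,t)=\bphi(t_1)-\phi(x_0,t_1)=\int_0^{t_1}\bigl(\bphi'(s)-\phi'(x_0,s)\bigr)\,ds,
\]
and on $(0,t_1]$ the integrand has a definite sign: since $s\mapsto\bphi'(s)/s^{p-1}$ is constant, \inc{p-1} and \dec{q_1-1} of $\phi'$ pin $\phi'(x_0,s)$ between the relevant power bounds, and integrating these yields $\phi'(x_0,t_1)t_1/q_1\le\phi(x_0,t_1)\le\phi'(x_0,t_1)t_1/p$ together with $\bphi(t_1)=\phi'(x_0,t_1)t_1/p$; this gives the two-sided comparison of $\bphi$ with $\phi(x_0,\cdot)$ on $[t_1,t_2]$ carrying the constant $q_1/p$. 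For $t\ge t_2$ I would repeat the computation on $[t_2,t]$, where $\bphi'(s)\le\phi'(x_0,s)$ by \inc{p-1}, to propagate the bound with the same constant. Finally, to pass from $x_0$ to a general $x\in B_r$ one uses only the hypothesis $\phi^+_{B_r}(t)\le\tL\phi^-_{B_r}(t)$ on $[t_1,t_2]$ together with $\phi^-_{B_r}(t)\le\phi(x_0,t)\le\phi^+_{B_r}(t)$, converting $\phi(x_0,t)$ into $\phi(x,t)$ at the cost of the factor $\tL$; for $t\ge t_2$ one first reduces to $t=t_2$, past which $\bphi$ no longer depends on $x$.

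For (4): write $\theta_0(x,t)=\phi(x,\bphi^{-1}(t))$ and note $\bphi$ is a continuous strictly increasing bijection of $[0,\infty)$, since $\bphi'>0$ (indeed $\phi'(x_0,t_1)>0$ by \azero{} and \dec{q_1-1} of $\phi'$). On the two tails $\bphi(s)$ is a fixed multiple of $s^{p}$ up to an additive constant that is itself controlled by $q_1/p$, so there $\theta_0(x,t)\approx\phi\bigl(x,ct^{1/p}\bigr)$ with structural constants, and \ainc{1} and \adec{q_1/p} follow from \inc{p} and \dec{q_1} of $\phi$ (valid with constant $1$ by Proposition~\ref{prop0}(1)) after substituting $u=ct^{1/p}$. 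On the middle range $t\in[\bphi(t_1),\bphi(t_2)]$, parts (2)--(3) and the oscillation hypothesis give $\theta_0(x,t)=\phi(x,\bphi^{-1}(t))\approx\bphi(\bphi^{-1}(t))=t$, so both conditions are trivial there; the three regimes are glued using continuity of $\theta_0(x,\cdot)$ at $t=\bphi(t_1)$, $t=\bphi(t_2)$, which degrades the constants only multiplicatively. Since $\bphi$ satisfies \azero{} by Proposition~\ref{prop0}(3), one gets $\bphi^{-1}(1)\approx1$, whence $\theta_0(x,1)=\phi(x,\bphi^{-1}(1))\approx\phi(x,1)\approx1$, i.e.\ \azero{}. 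I expect the main work to be in (2)--(3)—getting the direction of the $\bphi$ versus $\phi(x_0,\cdot)$ comparison and the sharp factor $q_1/p$—and in (4) the bookkeeping of the change of variables $t=\bphi(s)$ so that all constants in the three growth regimes remain structural and match at the junctions.
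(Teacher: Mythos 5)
Your reduction of (2)--(3) to the single quantity $\bphi(t_1)-\phi(x_0,t_1)$ and the numbers you compute are correct: $\bphi(t_1)=a_1t_1/p$ while \inc{p-1} and \dec{q_1-1} give $a_1t_1/q_1\le\phi(x_0,t_1)\le a_1t_1/p$. But look at what these actually say: $\bphi(t_1)\ge\phi(x_0,t_1)$, hence $\phi(x_0,t)\le\bphi(t)\le\tfrac{q_1}{p}\phi(x_0,t)$ on $[t_1,t_2]$. The first inequality asserted in part (2), $\bphi(t)\le\phi(x_0,t)$, is the \emph{reverse} of what your bounds deliver, and your write-up never addresses it — the phrase ``two-sided comparison \dots carrying the constant $q_1/p$'' hides the direction. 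In fact, with \eqref{phi0} as printed (lower tail $\tfrac{a_1}{t_1^{p-1}}t^{p-1}$) that inequality is generally false: take $\phi'(x_0,t)=t^{q_1-1}$, so $\bphi(t_1)=t_1^{q_1}/p>t_1^{q_1}/q_1=\phi(x_0,t_1)$. So either you should have flagged a typo (the inequality as stated matches the variant of the construction in \cite{HasO22} whose tail below $t_1$ has exponent $q_1-1$, whereas the $p-1$ tail is what Lemma~\ref{lem:regular} later relies on), or you must accept that your argument proves only the corrected two-sided bound $\tL^{-1}\phi(x,t)\le\bphi(t)\le\tfrac{q_1}{p}\tL\,\phi(x,t)$ — which is what the rest of the paper uses, but is not part (2) as written. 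Spotting and stating this discrepancy is the missing step.

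A second, smaller gap is the case $t\ge t_2$, $x\neq x_0$ in (3). The remark that past $t_2$ ``$\bphi$ no longer depends on $x$'' does not help ($\bphi$ never depends on $x$); the real issue is that the hypothesis $\phi^+_{B_r}\le\tL\phi^-_{B_r}$ is available only on $[t_1,t_2]$, so you cannot simply ``reduce to $t=t_2$'' at the level of $\phi$. What is needed is a comparison of the slopes: by \dec{q_1-1}, the hypothesis at $t_2$, and \inc{p-1} of $\phi'(x,\cdot)$ one gets $a_2=\phi'(x_0,t_2)\le\tfrac{q_1}{t_2}\phi(x_0,t_2)\le\tfrac{q_1\tL}{t_2}\phi(x,t_2)\le\tfrac{q_1\tL}{p}\phi'(x,t_2)$, hence $\bphi'(s)=a_2(s/t_2)^{p-1}\le\tfrac{q_1\tL}{p}\phi'(x,s)$ for $s\ge t_2$; integrating and adding $\bphi(t_2)\le\tfrac{q_1}{p}\tL\,\phi(x,t_2)$ gives (3) with the stated constant. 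Parts (1) and (4) are fine as sketched, and overall your route is the natural direct verification (the paper itself only cites \cite[Section~5]{HasO22} for this proposition), so these two points are the only substantive corrections.
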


Note that the inequality assumed in the above proposition will be linked to the one in Proposition~\ref{prop:AphiVA}(2) and \ref{prop:fphiVA}(2).
In the Uhlenbeck case $F(x,\xi)=\phi(x,|\xi|)$ a suitably mollified version of $\bphi$ can be used as the approximation of $\phi$ \cite{HasO22}. 
However, as far as we can tell, a similar approach to \eqref{phi0} does 
not work in the general case, so we introduce smooth transitions around $t_1$ and $t_2$ with 
transition functions $\eta_i$. 
We deal with $A$ and $F$ in separate subsections. 

%%%%%%%%%%%%%%%%%%%%%%%%%%%%%%%%%%%%%%%%%%%%%%%%%%%%%%%%%%%%%%%%%%%%
\subsection{Approximation for weak solutions}\label{sec:PDEapprox}

Let $A:\Omega\times \Rn \to \Rn$ satisfy Assumption~\ref{ass:A}, and $\phi\in \Phic(\Omega)$ be 
its growth function. Recall that $\phi'$ satisfies \azero, \inc{p-1} and \dec{q_1-1}. 
In this subsection, we construct an autonomous nonlinearity $\bA :\Rn\to \Rn$ with 
growth function $\bphi$ given by \eqref{phi0}, such that $\bA$ and $\bphi$ are comparable 
with $A$ and $\phi$, respectively, in a suitable sense.

Fix any small ball $B_r=B_r(x_0)\Subset \Omega$ satisfying $|B_r|\le 1$. 
For $t_1\in (0,\frac{1}{2}]$ and $t_2 \ge 2$ that will be chosen later in the next section, 
$a_1$ and $a_2$ from the definition of $\bphi$, 
and constants $q_1$, $\nu$, $\Lambda$ given in Proposition~\ref{prop:growthfunction} we define 
\begin{equation}\label{A0def}
\bA(\xi): = 
\underbrace{\frac{\nu}{8} \eta_1(|\xi|) \frac{a_1}{t_1^{p-1}} |\xi|^{p-2}\xi}_{=:\bA_1(\xi)} 
+ 
\underbrace{\eta_2(|\xi|) A(x_0,\xi)}_{=:\bA_2(\xi)} 
+ 
\underbrace{\bar\Lambda \eta_3(|\xi|) \frac{a_2}{t_2^{p-1}} |\xi|^{p-2}\xi}_{=:\bA_3(\xi)},
\end{equation}
where $\bar\Lambda :=\frac{2^{q_1 -p+3}\Lambda}{\min\{p-1,1\}}$,
and $\eta_i\in C^\infty([0,\infty))$, $i=1,2,3$, satisfy 
\[
\eta_1\equiv 1\ \text{ in } \ [0,t_1), \quad \eta_1\equiv 0\ \text{ in } \ [2t_1,\infty) \quad \text{and}\quad - \tfrac{2}{t_1}\le \eta_1' \le 0,
\]
\[
\eta_2\equiv 1\ \text{ in } \ [0,t_2), \quad \eta_2\equiv 0\ \text{ in } \ [2t_2,\infty) \quad \text{and}\quad -\tfrac{2}{t_2} \le \eta_2'\le 0,
\]
\[
\eta_3\equiv 0\ \text{ in } \ [0,\tfrac{t_2}{2}), \quad \eta_3\equiv 1\ \text{ in } \ [t_2,\infty) \quad \text{and}\quad 0\le \eta_3'\le \tfrac{4}{t_2}.
\]
Clearly, $\bA\in C(\Rn,\Rn)\cap C^{1}(\Rn\setminus\{ 0\},\Rn)$, and 
\begin{equation*}%\label{A0A}
\bA(\xi)=A(x_0,\xi) \quad \text{whenever }\ 2t_1\le |\xi| \le \tfrac{t_2}{2}. 
\end{equation*}
Possible functions $\eta_i$ are sketched in Figure~\ref{fig:functions} to assist in following the proof.

\begin{figure}[ht!]
\definecolor{cqcqcq}{rgb}{0.7529411764705882,0.7529411764705882,0.7529411764705882}
\begin{tikzpicture}[line cap=round,line join=round,>=triangle 45,x=1.0cm,y=2.0cm]
\draw [color=cqcqcq,, xstep=1.0cm,ystep=1.0cm] (-0.1,-0.1) grid (11.1,1.3);
\draw[->,color=black] (-0.1,0.) -- (11.1,0.);
%\foreach \x in {0,1,2,3,4,5,6,7,8,9,10,11}
%\draw[shift={(\x,0)},color=black] (0pt,2pt) -- (0pt,-2pt) node[below] {\footnotesize $\x$};
%\draw[->,color=black] (0.,-0.1) -- (0.,1.3);
\foreach \y in {0,1}
\draw[shift={(0,\y)},color=black] (2pt,0pt) -- (-2pt,0pt) node[left] {\footnotesize $\y$};
\clip(-0.1,-0.4) rectangle (11.1,1.3);
\def\t{0.8};
\def\tt{5};
\def\eps{0.01};
\def\delta{0.1};
\draw[color=black] (\t, -0.2) node {$t_1$};
\draw[color=black] (2*\t, -0.2) node {$2t_1$};
\draw[color=black] (\tt/2+0.1, -0.2) node {$\frac12t_2$};
\draw[color=black] (\tt, -0.2) node {$t_2$};
\draw[color=black] (2*\tt, -0.2) node {$2t_2$};
\draw[color=red, line width=2pt, loosely dashed] (0,1+\eps) -- (\t+\delta,1+\eps) -- (2*\t-\delta,\eps) -- (11,\eps);
\draw[color=red, line width=2pt, loosely dashed] (1.5*\t-0.2, 0.5) node {$\eta_1$};
\draw[color=ForestGreen, line width=1.5pt] (0,1) -- (\tt+\delta,1) -- (2*\tt-\delta,0) -- (11,0);
\draw[color=ForestGreen, line width=1.5pt] (1.5*\tt-0.45, 0.5) node {$\eta_2$};
\draw[color=blue, line width=1.5pt, dotted] (0,-\eps) -- (\tt/2+\delta,-\eps) -- (\tt-\delta,1-\eps) -- (11,1-\eps);
\draw[color=blue, line width=1.5pt, dotted] (0.75*\tt-0.3, 0.5) node {$\eta_3$};
\begin{scriptsize}
%\draw[color=black] (-2.64,-2.81) node {$f$};
\end{scriptsize}
\end{tikzpicture}
\caption{The functions $\eta_i$}\label{fig:functions}
\end{figure}

\begin{lemma}\label{lem:regular}
If $\phi$ is a growth function of $A$, and $\bphi$ and $\bA$ are their autonomous 
approximations as in 
\eqref{phi0} and \eqref{A0def}, then $\bphi$ is a growth function of $\bA$. 
\end{lemma}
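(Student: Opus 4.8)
The plan is to check the three defining properties of a growth function (Definition~\ref{def:growthfunction}) for the pair $(\bphi,\bA)$. That $\bphi\in\Phic\cap C^1([0,\infty))$ with $\bphi'$ satisfying \azero{}, \inc{p-1} and \dec{q_1-1} is read off from the construction \eqref{phi0} exactly as in Proposition~\ref{prop:phi0}(1): the monotonicity inequalities hold on each of the three pieces and glue because $\phi'(x_0,\cdot)$ satisfies them, while \azero{} follows from $\bphi'(1)=\phi'(x_0,1)\in[L^{-1},L]$ (valid since $t_1\le1\le t_2$). Similarly $\bA$ satisfies Assumption~\ref{ass:A}(1): it vanishes at the origin because $p>1$, is $C^1$ away from it, and is trivially measurable. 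So the whole content of the lemma is the growth bound \eqref{Agrowth}, $|\bA(\xi)|+|\xi|\,|D_\xi\bA(\xi)|\lesssim\bphi'(|\xi|)$, and the ellipticity bound \eqref{Aellipticity}, $D_\xi\bA(\xi)\tilde\xi\cdot\tilde\xi\gtrsim\tfrac{\bphi'(|\xi|)}{|\xi|}|\tilde\xi|^2$, for suitable constants depending only on $p,q,L$.

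To prove these I would write $\bA=\bA_1+\bA_2+\bA_3$ as in \eqref{A0def}, compute $D_\xi\bA_i$ by the chain and product rules, and argue separately on the six ranges of $|\xi|$ on which each of $\eta_1,\eta_2,\eta_3$ is locally constant: $(0,t_1]$, $[t_1,2t_1]$, $[2t_1,\tfrac{t_2}2]$, $[\tfrac{t_2}2,t_2]$, $[t_2,2t_2]$ and $[2t_2,\infty)$. On the central range $[2t_1,\tfrac{t_2}2]$ one has $\bA=A(x_0,\cdot)$ and $\bphi'=\phi'(x_0,\cdot)$, so \eqref{Agrowth} and \eqref{Aellipticity} are precisely the defining inequalities of the growth function $\phi$ of $A$ at $x_0$. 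On the power tails $(0,t_1]$ and $[2t_2,\infty)$, $\bA$ equals a fixed multiple of the weighted $p$-Laplacian field $\tfrac{a_i}{t_i^{p-1}}|\xi|^{p-2}\xi$ with $\bphi'(t)=\tfrac{a_i}{t_i^{p-1}}t^{p-1}$, so both bounds are the standard $p$-Laplacian estimates, the lower one costing only the factor $\min\{1,p-1\}$ via $(\xi\cdot\tilde\xi)^2\le|\xi|^2|\tilde\xi|^2$; on $(0,t_1]$ the extra summand $A(x_0,\cdot)$ contributes at most $\Lambda\phi'(x_0,|\xi|)\le\Lambda\bphi'(|\xi|)$ to the upper bound (here \inc{p-1} of $\phi'$ gives $\phi'(x_0,t)\le\bphi'(t)$ on $(0,t_1]$) and is positive semidefinite, hence harmless for the lower bound.

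The substance is the three transition ranges. On each of them $|\xi|\approx t_1$ or $|\xi|\approx t_2$, so \inc{p-1} and \dec{q_1-1} give $\phi'(x_0,|\xi|)\approx a_i\approx\bphi'(|\xi|)$; combined with $|\eta_1'|\le\tfrac2{t_1}$, $|\eta_2'|\le\tfrac2{t_2}$, $|\eta_3'|\le\tfrac4{t_2}$ this yields the upper bound \eqref{Agrowth} by direct differentiation. For \eqref{Aellipticity} one retains in each range the genuinely monotone summand --- $A(x_0,\cdot)$ on $[t_1,2t_1]$ and on $[\tfrac{t_2}2,t_2]$, and $\bA_3$ (with $\eta_3\equiv1$) on $[t_2,2t_2]$ --- and absorbs the quadratic form of the remaining, $\eta_i'$-carrying terms into it. On $[\tfrac{t_2}2,t_2]$ this is immediate, since $\eta_3,\eta_3'\ge0$ force $D_\xi[\eta_3(|\xi|)|\xi|^{p-2}\xi]\tilde\xi\cdot\tilde\xi\ge|\xi|^{p-2}|\tilde\xi|^2\big[(p-1)\eta_3(|\xi|)+\eta_3'(|\xi|)|\xi|\big]\ge0$, so \eqref{Aellipticity} follows from the ellipticity of $A$ at $x_0$ alone.

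The two delicate ranges are $[t_1,2t_1]$ and $[t_2,2t_2]$, where $\eta_1'<0$, resp.\ $\eta_2'<0$, produces an anti-monotone term. There the key pointwise estimate is $\tfrac{a_i}{t_i}\big(\tfrac{|\xi|}{t_i}\big)^{p-2}\le\tfrac{\bphi'(|\xi|)}{|\xi|}$ --- which on $[t_1,2t_1]$ is \inc{p-1} of $\phi'$ and on $[t_2,2t_2]$ is just the definition of $\bphi'$ --- together with $|\eta_i'|\,|\xi|\le4$ and, on $[t_2,2t_2]$, the bound $\phi'(x_0,|\xi|)\le2^{q_1-p}\bphi'(|\xi|)$ from \dec{q_1-1}. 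These show that the bad contribution of $\bA_1$ is $\ge-\tfrac{\nu}2\tfrac{\bphi'(|\xi|)}{|\xi|}|\tilde\xi|^2$, precisely because of the coefficient $\tfrac{\nu}8$ and $\tfrac18\cdot4=\tfrac12$; and that the bad contribution of $\eta_2A(x_0,\cdot)$ is $\ge-2^{q_1-p+2}\Lambda\tfrac{\bphi'(|\xi|)}{|\xi|}|\tilde\xi|^2$, which is at most half of the $\bA_3$-ellipticity $\bar\Lambda\min\{1,p-1\}\tfrac{\bphi'(|\xi|)}{|\xi|}|\tilde\xi|^2$ exactly because $\bar\Lambda=\tfrac{2^{q_1-p+3}\Lambda}{\min\{p-1,1\}}$. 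Adding back the retained monotone term then leaves $D_\xi\bA(\xi)\tilde\xi\cdot\tilde\xi\ge\bar\nu\tfrac{\bphi'(|\xi|)}{|\xi|}|\tilde\xi|^2$ for a suitable $\bar\nu>0$ depending only on $p,q,L$, which finishes the verification. I expect this balancing on the two decreasing transition ranges to be the main obstacle; it is the reason the particular constants $\tfrac{\nu}8$ and $\bar\Lambda$ were hard-wired into \eqref{A0def}, and everything else is bookkeeping with \inc{p-1}, \dec{q_1-1} and the derivative bounds on the $\eta_i$.
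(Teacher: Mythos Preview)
Your proposal is correct and follows essentially the same approach as the paper: the same case decomposition in $|\xi|$, the same identification of which summand furnishes the ellipticity in each range, and the same absorption argument on $[t_1,2t_1]$ and $[t_2,2t_2]$ using the specific constants $\tfrac{\nu}{8}$ and $\bar\Lambda$. One minor slip: on $[\tfrac{t_2}{2},t_2]$ your intermediate inequality $D_\xi[\eta_3(|\xi|)|\xi|^{p-2}\xi]\tilde\xi\cdot\tilde\xi\ge|\xi|^{p-2}|\tilde\xi|^2\big[(p-1)\eta_3(|\xi|)+\eta_3'(|\xi|)|\xi|\big]$ fails when $p>2$ (take $\tilde\xi\perp\xi$, so the $\eta_3'$-term vanishes on the left but not on the right); the eigenvalues of this Hessian are $\eta_3|\xi|^{p-2}$ and $[(p-1)\eta_3+\eta_3'|\xi|]|\xi|^{p-2}$, both nonnegative, so your conclusion $\ge0$ --- which is all you need --- is still correct.
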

\begin{proof}
Let us start by calculating the derivative of $\eta_i(|\xi|) |\xi|^{p-2}\xi$ with $i=1,3$:
\begin{equation}\label{eq:p-hessian}
D_\xi(\eta_i(|\xi|) |\xi|^{p-2}\xi) 
= 
\eta_i'(|\xi|) |\xi|^{p-3}\xi\otimes \xi
+
(p-2)\eta_i(|\xi|) |\xi|^{p-4}\xi\otimes \xi
+
\eta_i(|\xi|) |\xi|^{p-2}I_n.
\end{equation}
Here $\otimes$ denotes the tensor product $(a_i)_i\otimes(b_i)_j = (a_ib_j)_{i,j}$ and 
$I_n$ is the $n$-dimensional identity matrix. 

We consider the first condition \eqref{Agrowth} of being a growth function 
with $\bA$  and $\bphi$ in place of $A$  and $\phi$.
When $|\xi|<2t_1$, we use 
$|\eta_1'(t)|\,t \le 2\frac{t}{t_1}\le 4$ and  $\bphi'(t)\approx \frac{a_1}{t^{p-1}_1}t^{p-1}$ by \inc{p-1} and \dec{q_1-1} for $t_1\le t <2t_1$ to conclude that 
\[
|\bA_1(\xi)|
+ 
|\xi| |D_\xi \bA_1(\xi)|
\le 
\frac\nu8 \frac{a_1}{t_1^{p-1}} 
\big[|\xi|^{p-1}+|\eta_1'(|\xi|)| |\xi|^{p}+(p-1) |\xi|^{p-1}\big]
\lesssim \frac{a_1}{t^{p-1}_1}|\xi|^{p-1} \approx \bphi'(|\xi|).
\]
Similarly, we conclude that $|\bA_3(\xi)|+ |\xi| |D_\xi \bA_3(\xi)|\lesssim 
\frac{a_2}{t_2^{p-1}}|\xi|^{p-1}\approx \bphi'(|\xi|)$ when $|\xi|\ge \frac{1}{2}t_2$. 
Finally, it follows by 
 \eqref{Agrowth} of $A$ and $\phi'(x_0,t)\approx \phi'(x_0,t_2)$ when $t_1\le t \le 2t_2$ that 
$|\bA_2(\xi)|+ |\xi| |D_\xi \bA_2(\xi)|\lesssim \bphi'(|\xi|)$. 
Therefore, we have established \eqref{Agrowth} of $\bA$.

For the ellipticity condition \eqref{Aellipticity}, we consider four cases: $0<|\xi|\le t_1$, $t_1<|\xi|\le 2 t_1$, 
$t_2/2<|\xi|\le t_2$ and $t_2<|\xi|\le 2 t_2$. 
The strategy is to get the ellipticity condition by the same condition of $\bA_1$ and $\bA_2$ for first and third cases, respectively. 
In the second and fourth cases one 
term of $D_\xi \bA$ is non-negative and the other is non-positive (since 
$\eta_1'$ or $\eta_2'$ is non-positive), so we have to show that the non-positive 
term can be absorbed in the non-negative one based on more precise estimates. 

Note that the other cases, $2t_1<|\xi|\le t_2/2$ and $|\xi|>2t_2$, are clear, since 
in these intervals only $\bA_2$ and $\bA_3$, respectively, influence $\bA$.

\framebox{$0<|\xi|\le t_1$} In this interval, $\eta_1\equiv\eta_2\equiv 1$ and 
$\eta_3\equiv 0$. 
By the calculation \eqref{eq:p-hessian} and the ellipticity of $A(x_0,\xi)$,  
\[\begin{split}
D_\xi \bA(\xi) \tilde \xi\cdot \tilde \xi 
& = D_\xi \bA_1(\xi) \tilde \xi\cdot \tilde \xi + 
\underbrace{D_\xi A(x_0, \xi) \tilde \xi\cdot \tilde \xi}_{\ge 0}
\ge 
\frac{\nu}{8} \frac{a_1}{t_1^{p-1}}
[(p-2) |\xi|^{p-4}(\xi\cdot \tilde \xi)^2 + |\xi|^{p-2}|\tilde \xi|^2] \\
& \ge \frac{\nu}{8} \min\{p-1,1\} \frac{a_1}{t_1^{p-1}} |\xi|^{p-2}|\tilde \xi|^2 = \frac{\nu}{8} \min\{p-1,1\} \frac{\bphi'(|\xi|)}{|\xi|} |\tilde \xi|^2.
\end{split}\]

\framebox{$t_1<|\xi|\le 2 t_1$} 
In this interval, $-\frac{4}{|\xi|} \le -\frac{2}{t_1} \le \eta_1' \le 0$, $\eta_2\equiv 1$ and $\eta_3\equiv 0$.
Therefore by the ellipticity of $|\xi|^{p-2}\xi$ and $A(x_0,\xi)$ and \inc{p-1} of $\phi'$,
\[\begin{split}
D_\xi \bA(\xi) \tilde \xi\cdot \tilde \xi 
& = \frac{\nu}{8} \eta'(|\xi|) \frac{a_1}{t_1^{p-1}} |\xi|^{p-3} (\xi\cdot \tilde \xi)^2 
+\underbrace{\frac{\nu}{8} \eta_1(|\xi|)\frac{a_1}{t_1^{p-1}}D_\xi (|\xi|^{p-2}\xi)\tilde \xi \cdot \tilde \xi}_{\ge 0}+ D_\xi A(x_0,\xi) \tilde \xi\cdot \tilde \xi \\
& \ge - \frac{\nu}{8} \frac{4}{|\xi|} \frac{a_1}{t_1^{p-1}} |\xi|^{p-1} |\tilde \xi|^2 + \nu \frac{\phi'(x_0,|\xi|)}{|\xi|}|\tilde \xi|^2 \\
& \ge - \frac{\nu}{2} \frac{\phi'(x_0,|\xi|)}{|\xi|}|\tilde \xi|^2 + \nu \frac{\phi'(x_0,|\xi|)}{|\xi|}|\tilde \xi|^2 = \frac{\nu}{2} \frac{\bphi'(|\xi|)}{|\xi|} |\tilde \xi|^2.
\end{split}\]

\framebox{$\frac{t_2}{2}<|\xi|\le t_2$}
In this interval, $ \eta'_3,\eta_3\ge 0$, $\eta_2\equiv 1$  and 
$\eta_1\equiv 0$. Hence 
\[\begin{split}
D_\xi \bA(\xi) \tilde \xi \cdot \tilde \xi 
& = D_\xi A(x_0,\xi) \tilde \xi \cdot \tilde \xi 
+\underbrace{\bar\Lambda \eta_3'(|\xi|) \frac{a_2}{t_2^{p-1}} |\xi|^{p-3} 
(\xi\cdot \tilde \xi)^2}_{\ge0} + 
\underbrace{\bar\Lambda \eta_3(|\xi|) \frac{a_2}{t_2^{p-1}} \partial (|\xi|^{p-2}\xi) \tilde \xi \cdot \tilde \xi}_{\ge0}\\
& \ge D_\xi A(x_0,\xi) \tilde \xi \cdot \tilde \xi \ge \nu \frac{\phi'(x_0,|\xi|)}{|\xi|} |\tilde \xi|^2 =\nu \frac{\bphi'(|\xi|)}{|\xi|} |\tilde \xi|^2.
\end{split}\]

\framebox{$t_2<|\xi|\le 2 t_2$}
In this interval, $-\frac{4}{|\xi|} \le -\frac{2}{t_2} \le \eta_2'\le 0$, 
$\eta_3\equiv1$ and $\eta_1\equiv 0$. 
Therefore
\[\begin{split}
D_\xi \bA(\xi) \tilde \xi \cdot \tilde \xi
& = 
\frac{\eta_2'(|\xi|)}{|\xi|} (\xi\otimes A(x_0,\xi))\tilde \xi \cdot \tilde \xi + \underbrace{\eta_2(|\xi|)D_\xi A(x_0,\xi) \tilde \xi \cdot \tilde \xi}_{\ge 0} + \bar\Lambda \frac{a_2}{t_2^{p-1}} D_\xi (|\xi|^{p-2}\xi) \tilde \xi \cdot \tilde \xi \\
& \ge 
- 4\Lambda \frac{\phi'(x_0,|\xi|)}{|\xi|} |\tilde \xi|^2 + \bar\Lambda \min\{p-1,1\} \frac{a_2}{t_2^{p-1}} |\xi|^{p-2}|\tilde \xi|^2.
\end{split}\]
For the first term, we use \inc{p-1} and \dec{q_1-1} of $\phi'$ to conclude that 
\[
\frac{\phi'(x_0,|\xi|)}{|\xi|^{p-1}} 
\le
\frac{\phi'(x_0,2t_2)}{(2t_2)^{p-1}} 
\le 2^{q_1-p}\frac{\phi'(x_0,t_2)}{t_2^{p-1}}
=
2^{q_1-p}\frac{a_2}{t_2^{p-1}}.
\]
Recalling the definition of $\bar\Lambda$, we complete the proof of 
\eqref{Aellipticity} of $\bA$ with the observation
\[
D_\xi \bA (\xi) \tilde \xi \cdot \tilde \xi \ge 2^{q_1-p+2}\Lambda \frac{a_2}{t_2^{p-1}} |\xi|^{p-2}|\tilde \xi|^2= 2^{q_1-p+2}\Lambda \frac{\bphi'(|\xi|)}{|\xi|}|\tilde \xi|^2 . \qedhere
\]
\end{proof}

%%%%%%%%%%%%%%%%%%%%%%%%%%%%%%%%%%%%%%%%%%%%%%%%%%%%%%%%%%%%%%%%%%%%
\subsection{Approximation for minimizers}%\label{sect:functionalapprox}

Let $F:\Omega\times \Rn \to [0,\infty)$ satisfy Assumption~\ref{ass:F} with constants $L\ge 1$ and $1<p<q$, and $\phi\in \Phic(\Omega)$ be its growth function. 
We construct an autonomous function $\bF :\Rn\to [0,\infty)$ with growth function $\bphi$ given in \eqref{phi0}. The construction is similar to, yet more delicate than, that of 
the previous subsection.
The added difficulty comes from the fact that we need to differentiate 
$F$ twice, which makes controlling the approximation more challenging. 

Fix any small ball $B_r=B_r(x_0)\Subset \Omega$ with $r\in(0,1)$ satisfying $|B_r|\le 1$. 
For $t_1\in (0,\frac{1}{2}]$, $t_2 \ge 2$, $\bar\nu\ll 1$ and $\bar\Lambda\gg 1$ that will be chosen later, we define
\begin{equation}\label{F0def}
\bF(\xi): = 
\underbrace{\bar\nu \eta_1(|\xi|) \frac{a_1}{t_1^{p-1}} |\xi|^{p}}_{=:\bF_1(\xi)}
 + \underbrace{\eta_2(|\xi|) F(x_0,\xi) }_{=:\bF_2(\xi)}
+ \underbrace{\bar\Lambda \eta_3(|\xi|) \frac{a_2}{t_2^{p-1}} |\xi|^p}_{=:\bF_3(\xi)},
\end{equation}
where $\eta_i\in C^\infty([0,\infty))$, $i=1,2,3$, satisfy
\[
\eta_1\equiv 1\ \text{ in } \ [0,t_1), \quad \eta_1\equiv 0\ \text{ in } \ [2t_1,\infty) ,\quad \eta_1' \le 0 \quad \text{and}\quad |\eta_1'|t_1 +|\eta_1''|t_1^2 \le 10,
\]
\[
\eta_2\equiv 1\ \text{ in } \ [0,t_2), \quad \eta_1\equiv 0\ \text{ in } \ [2t_2,\infty) ,\quad \eta_2'\le 0 \quad \text{and}\quad |\eta_2'|t_2 + |\eta_2''|t_2^2 \le 10,
\]
\[
\eta_3(t):=\int_0^t \frac{h(s)}{s^2}\, ds
%\equiv 0\ \text{ in } \ [0,\tfrac{t_2}{2}), \quad \eta_3\equiv 1\ \text{ in } \ [t_2,\infty), \quad \eta_3'\ge0 \quad \text{and}\quad |\eta_3'|t_2+|\eta_3''|t_2^2 \le 10.
\]
with $h\in C^\infty([0,\infty))$ increasing, equal to $0$ on $[0,\frac12 t_2]$ and to $t_2$ on
$[\frac 34 t_2, \infty]$, and $\|h'\|_\infty \le 10$. 

Observe that $\eta_1$ and $\eta_2$ are analogous to their namesakes in the previous 
subsection, but $\eta_3$ behaves somewhat differently. The reason is that if 
$\eta_3$ is forced to be constant from $2t_2$ onward as in the previous subsection, then it is not possible to control 
derivatives up to order $2$ in an appropriate manner. 
We again note that 
\begin{equation}\label{eq:etaEst}
\eta_i(t) + t |\eta_i'(t)| + t^2 |\eta_i''(t)| \le C
\end{equation}
for $i=1, 2, 3$ and all $t\ge 0$, and that 
\begin{equation*}%\label{f0f}
\bF(\xi)=F(x_0,\xi) \quad \text{whenever }\ 2t_1\le |\xi| \le \tfrac{t_2}{2}. 
\end{equation*}

\begin{lemma}
If $\phi$ is a growth function of $F$, and $\bphi$ and $\bF$ are their 
autonomous approximations as in 
\eqref{phi0} and \eqref{F0def}, then $\bphi$ is a growth function of $\bF$. 
\end{lemma}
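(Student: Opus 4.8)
The plan is to follow the strategy of the proof of Lemma~\ref{lem:regular}, now carrying two derivatives of $\bF$ rather than one of $\bA$. By Definition~\ref{def:growthfunction}, saying that $\bphi$ is a growth function of $\bF$ means that $\bphi$ is a growth function of $\bA:=D_\xi\bF$, and the properties of $\bphi$ itself ($\bphi\in\Phic\cap C^1([0,\infty))$ with $\bphi'$ satisfying \azero, \inc{p-1} and \dec{q_1-1}) are exactly Proposition~\ref{prop:phi0}(1). Since each $\bF_i$ lies in $C^1(\Rn)\cap C^2(\Rn\setminus\{0\})$ and $D_\xi\bF_i(\xi)\to0$ as $\xi\to0$ (using $p>1$ for $\bF_1$, $\bF_3$ and Assumption~\ref{ass:F}(1) for $\bF_2$), we have $\bA\in C(\Rn,\Rn)\cap C^1(\Rn\setminus\{0\})$ with $\bA(0)=0$. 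So the whole task is to verify the growth bound \eqref{Agrowth} and the ellipticity bound \eqref{Aellipticity} for the pair $(\bA,\bphi)$, i.e.\
\[
|D_\xi\bF(\xi)|+|\xi|\,|D^2_\xi\bF(\xi)|\lesssim\bphi'(|\xi|),
\qquad
D^2_\xi\bF(\xi)\tilde\xi\cdot\tilde\xi\gtrsim\frac{\bphi'(|\xi|)}{|\xi|}\,|\tilde\xi|^2,
\]
for all $\xi,\tilde\xi\in\Rn\setminus\{0\}$, with constants depending only on $p$, $q$, $L$, provided $\bar\nu$ is small enough and $\bar\Lambda$ large enough in terms of these parameters — which we may and do assume.

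First I would record the ingredients the case analysis rests on. For a radial function $\xi\mapsto g(|\xi|)$ the Hessian at $\xi=te$, $|e|=1$, equals $\tfrac{g'(t)}{t}(I_n-e\otimes e)+g''(t)\,e\otimes e$, hence is positive semidefinite precisely when $g'(t)/t\ge0$ and $g''(t)\ge0$. Applying this with $g_3(t):=\eta_3(t)t^p$ and $\eta_3'(t)=h(t)/t^2$ gives
\[
\frac{g_3'(t)}{t}=h(t)t^{p-3}+p\,\eta_3(t)t^{p-2},
\qquad
g_3''(t)=(2p-2)h(t)t^{p-3}+h'(t)t^{p-2}+p(p-1)\eta_3(t)t^{p-2},
\]
which, because $h\ge0$, $h'\ge0$, $\eta_3\ge0$ and $p>1$, are \emph{both nonnegative for every} $t>0$; thus $\bF_3$ is convex and $D^2_\xi\bF_3$ can only help the ellipticity. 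This is the structural point behind the special form of $\eta_3$. I would also note that $\eta_3(t)$ is comparable to a positive constant for $t\ge\tfrac34t_2$, uniformly in $t_2$ (take $h$ self-similar, $h(s)=t_2\rho(4s/t_2-2)$ for a fixed profile $\rho$, so that $\eta_3(t)=\Psi(t/t_2)$ for a fixed bounded increasing $\Psi$ with $\Psi(3/4)>0$), and that all $\eta_i$ obey \eqref{eq:etaEst}. From $\phi$ being a growth function of $F$ together with \eqref{phifequiv} and Proposition~\ref{prop0}(2) I record $F(x_0,\xi)\approx|\xi|\phi'(x_0,|\xi|)$, $|D_\xi F(x_0,\xi)|+|\xi|\,|D^2_\xi F(x_0,\xi)|\lesssim\phi'(x_0,|\xi|)$ and $D^2_\xi F(x_0,\xi)\tilde\xi\cdot\tilde\xi\ge\nu\tfrac{\phi'(x_0,|\xi|)}{|\xi|}|\tilde\xi|^2$; and from \inc{p-1}, \dec{q_1-1} of $\phi'(x_0,\cdot)$ that $\phi'(x_0,t)\le\bphi'(t)$ on $[0,t_1]$, $\phi'(x_0,t)=\bphi'(t)$ on $[t_1,t_2]$, $\phi'(x_0,t)\approx\bphi'(t)$ on $[\tfrac12t_2,2t_2]$, while $\bphi'(t)=\tfrac{a_1}{t_1^{p-1}}t^{p-1}$ on $[0,t_1]$, $\bphi'(t)=\tfrac{a_2}{t_2^{p-1}}t^{p-1}$ on $[t_2,\infty)$ and $\bphi'(t)\approx a_1$ on $[t_1,2t_1]$.

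Then I would split according to $|\xi|$, exactly as in Lemma~\ref{lem:regular}. On $[0,t_1]$ and on $[2t_1,\tfrac12t_2]$ all relevant $\eta_i$ are constant, so $\bF$ equals $\bar\nu\tfrac{a_1}{t_1^{p-1}}|\xi|^p+F(x_0,\xi)$ or $F(x_0,\xi)$, and both bounds are immediate from convexity of $|\xi|^p$ (using $D^2_\xi(|\xi|^p)\tilde\xi\cdot\tilde\xi\ge p\min\{1,p-1\}|\xi|^{p-2}|\tilde\xi|^2$ on $[0,t_1]$) and the recorded estimates for $F(x_0,\cdot)$, together with $D^2_\xi F(x_0,\cdot)\ge0$. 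On $[\tfrac12t_2,t_2]$, $\bF=F(x_0,\xi)+\bF_3$ with $\bF_3$ convex, so the ellipticity comes from $F(x_0,\cdot)$ and the growth from the sizes of $g_3',g_3''$ and $\bphi'\approx a_2$. The two transition intervals carry the work. On $[t_1,2t_1]$, $\bF=\bar\nu\tfrac{a_1}{t_1^{p-1}}\eta_1(|\xi|)|\xi|^p+F(x_0,\xi)$: by \eqref{eq:etaEst} the Hessian of the first summand is bounded in norm by $C\bar\nu\tfrac{a_1}{t_1^{p-1}}|\xi|^{p-2}\le C\bar\nu\tfrac{\bphi'(|\xi|)}{|\xi|}$, hence absorbed by $D^2_\xi F(x_0,\xi)\tilde\xi\cdot\tilde\xi\ge\nu\tfrac{\bphi'(|\xi|)}{|\xi|}|\tilde\xi|^2$ once $\bar\nu\le\nu/(2C)$, and the growth bound is clear. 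On $[t_2,2t_2]$, $\bF=\bF_2+\bF_3$ with $\eta_2$ decreasing: the Leibniz rule, \eqref{eq:etaEst} and the $F$-bounds give $|D^2_\xi\bF_2\tilde\xi\cdot\tilde\xi|\le C\tfrac{\bphi'(|\xi|)}{|\xi|}|\tilde\xi|^2$, while there $h\equiv t_2$ and $\eta_3\approx1$, so the formulas for $g_3',g_3''$ yield $D^2_\xi\bF_3\tilde\xi\cdot\tilde\xi\ge c(p)\bar\Lambda\tfrac{\bphi'(|\xi|)}{|\xi|}|\tilde\xi|^2$; choosing $\bar\Lambda\ge 2C/c(p)$ gives the ellipticity, and the growth bound follows again from the sizes of $g_3',g_3''$. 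Finally, on $[2t_2,\infty)$ only $\bF_3$ survives and both bounds follow from the same formulas with $h\equiv t_2\le|\xi|$ and $\bphi'(t)=\tfrac{a_2}{t_2^{p-1}}t^{p-1}$.

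The main obstacle is the transition interval $[t_2,2t_2]$: there $\eta_2$ is being switched off, so the error terms from differentiating $\eta_2(|\xi|)F(x_0,\xi)$ twice are of the \emph{same} order as the ellipticity (unlike the harmless $O(\bar\nu)$ errors on $[t_1,2t_1]$) and can only be compensated by $\bF_3$ — which is exactly why $\eta_3$ must have the form $\int_0^{\cdot}h(s)s^{-2}\,ds$ with $h\ge0$ increasing, so that $\bF_3$ is genuinely convex and a large multiple $\bar\Lambda$ of it may be added without spoiling the upper growth bound in an uncontrolled way. The remaining work is bookkeeping: tracking which estimates cost a factor $\bar\nu$ and which a factor $\bar\Lambda$, and checking that the admissible thresholds for $\bar\nu$ and $\bar\Lambda$ depend only on $p$, $q$, $L$ (through $p_1=p$, $q_1$, $\nu$, $\Lambda$ and the fixed profiles $\eta_i$).
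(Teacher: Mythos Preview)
Your proposal is correct and follows essentially the same approach as the paper: the same case split on $|\xi|$, the same absorption of the $\bar\nu$-error by $D_\xi^2F(x_0,\cdot)$ on $[t_1,2t_1]$, and the same compensation of the $\bF_2$-error by $\bar\Lambda\bF_3$ on $[t_2,2t_2]$, all resting on the convexity of $\bF_3$. Your presentation via the radial Hessian diagonalization $D_\xi^2(g(|\xi|))=\tfrac{g'(t)}{t}(I_n-e\otimes e)+g''(t)\,e\otimes e$ makes the positivity of $D_\xi^2\bF_3$ slightly more transparent than the paper's direct coefficient check, but the substance is identical.
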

\begin{proof} 
To check the first condition of the definition of growth function, \eqref{Agrowth} with $A:=D_\xi \bF$ and $\phi:=\bphi$, 
we calculate the derivatives and use \eqref{eq:etaEst} as in Lemma~\ref{lem:regular}; 
the only noteworthy feature is that we use \inc{p-1} to control $F(x_0,\xi)$ and its 
derivatives when $t<t_1$: 
\[
|D_\xi\bF(\xi)| \le p\bar\nu a_1 (\tfrac {|\xi|}{t_1})^{p-1}+ \Lambda \phi'(x_0,|\xi|) \le (p\bar\nu +\Lambda)a_1(\tfrac {|\xi|}{t_1})^{p-1} \approx \bphi'(|\xi|)
\]
and
\[
|\xi|\,|D_\xi^2\bF(\xi)| \le c(n,p)\bar\nu a_1 (\tfrac {|\xi|}{t_1})^{p-1} + \Lambda \phi'(x_0,|\xi|) \le (c(n,p) \bar\nu +\Lambda) a_1(\tfrac {|\xi|}{t_1})^{p-1} \approx \bphi'(|\xi|).
\]

We move on to \eqref{Aellipticity} and
consider four cases: $0<|\xi|\le t_1$, $t_1<|\xi|\le 2 t_1$, $t_2/2<|\xi|\le t_2$ 
and $|\xi|>t_2$. The remaining case $2t_1<|\xi|\le t_2/2$ follows 
from the assumption that $\phi$ is a growth function of $F$, 
since $\phi\equiv \bphi$ and $F\equiv \bF$ in this set.
 
\framebox{$0<|\xi|\le t_1$} 
As in Lemma~\ref{lem:regular}, we find that 
\[\begin{split}
D_\xi^2\bF(\xi) \tilde \xi\cdot \tilde \xi & 
= \bar\nu \frac{a_1}{t_1^{p-1}}D_\xi(|\xi|^{p-2}\xi)\tilde \xi \cdot \tilde \xi + 
\underbrace{D_\xi^2F(x_0,\xi) \tilde \xi\cdot \tilde \xi}_{\ge 0} \\
% & \ge p\bar\nu \frac{a_1}{t_1^{p-1}}D(|\xi|^{p-2}\xi)\tilde \xi \cdot \tilde \xi \\
& \ge \bar\nu \min\{p-1,1\} \frac{a_1}{t_1^{p-1}} |\xi|^{p-2}|\tilde \xi|^2 
= \bar\nu \min\{p-1,1\}\frac{\bphi'(|\xi|)}{|\xi|} |\tilde \xi|^2.
\end{split}\]

\framebox{$t_1<|\xi|\le 2 t_1$} 
We calculate
\[\begin{split}
D_\xi \bF(\xi) 
&= \bar\nu \big( \eta_1'(|\xi|)|\xi|+p\eta_1(|\xi|)\big) \frac{a_1}{t_1^{p-1}} |\xi|^{p-2}\xi + D_\xi F(x_0,\xi)
\end{split}\]
and
\[\begin{split}
D_\xi^2\bF(\xi) 
& = \bar\nu\big[ \eta_1''(|\xi|)|\xi|^2+ (1+p)\eta_1'(|\xi|)|\xi|\big] \frac{a_1}{t_1^{p-1}} |\xi|^{p-4} \xi\otimes \xi\\
&\qquad + \bar\nu \big[ \eta_1'(|\xi|)|\xi|+p\eta_1(|\xi|)\big] \frac{a_1}{t_1^{p-1}} D_\xi (|\xi|^{p-2}\xi) + D_\xi^2 F(x_0,\xi).
\end{split}\]
By \eqref{eq:etaEst}, the coefficients in the square brackets are bounded from 
below by a negative constant. 
By taking $\bar\nu$ sufficiently small and using \inc{p-1} and \dec{q-1} of $\bphi'$, we have 
\[\begin{split}
D_\xi^2\bF(\xi) \tilde \xi\cdot \tilde \xi & \ge - \bar\nu c \frac{a_1}{t_1^{p-1}} |\xi|^{p-2} |\tilde \xi|^2 + \nu \frac{\phi'(x_0,|\xi|)}{|\xi|}|\tilde \xi|^2 
 \ge 
%- \frac{\nu}{2} \frac{\phi'(x_0,|\xi|)}{|\xi|}|\tilde \xi|^2 + \nu \frac{\phi'(x_0,|\xi|)}{|\xi|}|\tilde \xi|^2 
%= 
\frac{\nu}{2} \frac{\phi'(x_0, |\xi|)}{|\xi|}|\tilde \xi|^2  
\approx
\frac{\nu}{2} \frac{\bphi'(|\xi|)}{|\xi|}|\tilde \xi|^2  .
\end{split}\]

\framebox{$\frac{t_2}{2}<|\xi|\le t_2$}
We observe that 
$\eta_3'(t)=\frac{h(t)}{t^2}$. Since $h(t)=\eta_3'(t)t^2$ is increasing and differentiable, 
we conclude that $(\eta_3'(t)t^2)'=\eta_3''(t)t^2 + 2t\eta_3'(t)\ge 0$. 
Also $p\eta_3(t)+ \eta_3'(t)t\ge 0$ since both terms are non-negative. We calculate
$
D_\xi \bF_3(\xi)= \bar\Lambda \big( p\eta_3(|\xi|)+ \eta_3'(|\xi|) |\xi|\big) \frac{a_2}{t_2^{p-1}} |\xi|^{p-2}\xi 
$
and 
\begin{align*}
D_\xi^2\bF_3(\xi) \tilde \xi \cdot \tilde \xi 
& = \bar\Lambda \big( (p+1)\eta_3'(|\xi|)|\xi|+ \eta_3''(|\xi|)|\xi|^2 \big)\frac{a_2}{t_2^{p-1}} |\xi|^{p-4} 
(\xi\cdot \tilde \xi)^2\\
&\qquad+ \bar\Lambda \big( p\eta_3(|\xi|)+ \eta_3'(|\xi|)|\xi|\big) \frac{a_2}{t_2^{p-1}}D_\xi(|\xi|^{p-2}\xi) \tilde \xi \cdot \tilde \xi
\ge 0.
\end{align*}
Therefore, the ellipticity of $\bF$ follows from the ellipticity of $\bF_2(\xi)=A(x_0,\xi)$. 

\framebox{$t_2<|\xi|$} 
As in the previous case, we conclude that 
\[
D_\xi^2\bF_3(\xi)\tilde \xi\cdot \tilde \xi 
\ge 
\bar\Lambda  p\eta_3(|\xi|) \frac{a_2}{t_2^{p-1}} D_\xi(|\xi|^{p-2}\xi) \tilde \xi \cdot \tilde \xi
\ge 
\tfrac{\bar\Lambda p}3 \min\{1,p-1\} \frac{a_2}{t_2^{p-1}}|\xi|^{p-2}|\tilde \xi|^2,
\]
where we used that $\eta_3'\ge 0$ and 
\[
\eta_3(|\xi|) \ge \eta_3(t_2) 
= \int_0^{t_2} \frac{h(s)}{s^2} \, ds 
\ge t_2\int_{\frac34 t_2}^{t_2} \frac{1}{s^2} \, ds 
= \frac 13. 
\]
Hence the ellipticity of $\bF=\bF_3$ when $|\xi|>2t_2$ follows. 

Finally, we suppose that $t_2<|\xi|\le 2 t_2$. We calculate
$D_\xi\bF_2(\xi)= \frac{\eta_2'(|\xi|)}{|\xi|} F(x_0,\xi) \xi + \eta_2(|\xi|)D_\xi F(x_0,\xi)$
and 
\[\begin{split}
D_\xi^2\bF_2(\xi) = & \frac{\eta_2'(|\xi|)}{|\xi|} F(x_0,\xi) I_n + 2\frac{\eta_2'(|\xi|)}{|\xi|} D_\xi F(x_0,\xi) \otimes \xi + \frac{\eta_2''(|\xi|)|\xi|-\eta_2'(|\xi|)}{|\xi|^3} F(x_0,\xi) \xi\otimes \xi\\
&\quad + \eta_2(|\xi|)D_\xi^2F(x_0,\xi) .
\end{split}\]
Since $|\eta_2'|t_2+|\eta_2''|t_2^2 \le 10$ and $|\xi|\approx t_2$, by choosing 
$\bar\Lambda$ sufficiently large for the second estimate, we obtain that 
\[\begin{split}
D_\xi^2\bF(\xi) \tilde \xi \cdot \tilde \xi 
& \ge - c \frac{\phi'(x_0,|\xi|)}{|\xi|} |\tilde \xi|^2 + \tfrac{\bar\Lambda p}{3} \min\{p-1,1\} \frac{a_2}{t_2^{p-1}}|\xi|^{p-2} |\tilde \xi|^2
 \gtrsim \frac{\bphi'(|\xi|)}{|\xi|^{p-1}} |\tilde \xi|^2;
\end{split}\]
here we also used \inc{p-1} and \dec{q_1-1} of $\phi'$ to conclude that 
\[
\frac{\phi'(x_0,|\xi|)}{|\xi|^{p-1}} 
\le 
\frac{\phi'(x_0,2t_2)}{(2t_2)^{p-1}} 
\le 
2^{q_1-p}\frac{\phi'(x_0,t_2)}{t_2^{p-1}}
=
2^{q_1-p}\frac{\bphi'(|\xi|)}{|\xi|^{p-1}}.
\]
Thus we obtain the second condition of being a growth function, \eqref{Aellipticity}. 
\end{proof}

%%%%%%%%%%%%%%%%%%%%%%%%%%%%%%%%%%%%%%%%%%%%%%%%%%%%%%%%%%%%%%%%%%%%%%%%
%%%%%%%%%%%%%%%%%%%%%%%%%%%%%%%%%%%%%%%%%%%%%%%%%%%%%%%%%%%%%%%%%%%%%%%%
%%%%%%%%%%%%%%%%%%%%%%%%%%%%%%%%%%%%%%%%%%%%%%%%%%%%%%%%%%%%%%%%%%%%%%%%

\section{Regularity of weak solutions and minimizers}\label{sect:regularity}

In this section we prove the main theorems stated in the introduction. 
We consider either $A:\Omega\times \Rn\to \Rn$ satisfying Assumption~\ref{ass:A} with $A^{(-1)}$ 
satisfying \wVA{} or $F:\Omega\times\Rn\to \R$ satisfying Assumption~\ref{ass:F} and \wVA{}. 
Note that the parameters $p$, $q$ and $L$ are from Assumption~\ref{ass:A} or~\ref{ass:F}, 
and the parameter $\bL$ is given by $\bL_K$ in \wVA{} with
\begin{equation}\label{Kchoice}
K=2c_1  \ \ \text{in the equation case}\ \ \text{or}\ \ K=2c_2  \ \ \text{in the functional case,}
\end{equation}
where $c_1$ and $c_2$ are from Propositions~\ref{prop:AphiVA} and \ref{prop:fphiVA}.
Let $\phi$ be a growth function of $A$ or $F$. Note that by Proposition~\ref{prop:AphiVA} or ~\ref{prop:fphiVA}, $A^{(-1)}$ or $F$ satisfies \aone{} when $K=1$ with $\bL_1$ depending only on $n$, $p$, $q$, $L$ and $\bL$, hence so does $\phi$.

Let $u\in W^{1,\phi}_{\loc}(\Omega)$ be a local weak solution  to \eqref{mainPDE} or a local minimizer of \eqref{mainfunctional}. Then,  
by Theorem~\ref{thm:holder} along with 
Remark~\ref{rmk:holder_functional},
%Remark~\ref{rmk:phipsi},
 we have  
$\phi(\cdot,|Du|)\in L^{1+\sigma}_{\loc}(\Omega)$ for some $\sigma=\sigma(n,p,q,L,\bL) \in (0,1)$.
With this $\sigma$, we fix the parameters $\epsilon$ and $\omega=\omega_\epsilon$ from \wVA{} by
\begin{equation}\label{epsilon}
\epsilon := \frac{\sigma}{2(2+\sigma)}<\frac{1}{6},
\end{equation}

We also fix $\Omega'\Subset\Omega$, and consider any $B_{2r}\subset \Omega'$ with $r\in(0,\frac{1}{2})$ satisfying
\[
\omega(r)\leq \frac{1}{2^{q_1}L},
\ \ 
|B_{2r}|
\leq 
\max\left\{2^{p}L, 2^{\frac{1}{1-\epsilon}}, 2^{\frac{2(1+\sigma)}{\sigma}} 
\|\phi(\cdot,|Du|)^{1+\sigma}\|_{L^1(\Omega')}^{\frac{2+\sigma}{\sigma}}\right\}^{-1}.
%\le\frac{1}{2},
\]
Note that using H\"older's inequality we have
\begin{equation}\label{Dusigmale1}
\int_{B_{2r}}\phi(x,|Du|)^{1+\frac{\sigma}{2}}\,dx
\le |B_{2r}|\left(\fint_{B_{2r}}\phi(x,|Du|)^{1+\sigma}\,dx\right)^{\frac{1+\sigma/2}{1+\sigma}} 
\le \frac{1}{2},
\end{equation}
so that
\begin{equation}\label{Dusigmale2}
\int_{B_{2r}}\phi(x,|Du|)\,dx\leq \int_{B_{2r}}\phi(x,|Du|)^{1+\frac{\sigma}{2}}\,dx+ |B_{2r}|\leq \frac{1}{2}+\frac{1}{2} = 1.
\end{equation}
Note that this allows us to take advantage of the higher integrability estimate in Theorem~\ref{thm:holder}. Next we set 
\[
t_1:= (\phi^-_{B_r})^{-1}(\omega(r)) \le \frac{1}{2}\quad \text{and}\quad t_2:=(\phi^-_{B_r})^{-1}(|B_{r}|^{-1})\ge 2. 
\]
With these $t_1$ and $t_2$, we construct $\bA$, $\bF$ and $\bphi$ as described in 
Section~\ref{sect:approx}.

%%%%%%%%%%%%%%%%%%%%%%%%%%%%%%%%%%%%%%%%%%%%%%%%%%%%%%%%%%%%%%%%%%%%%%%%%%%%%%%%%%%%%
\subsection{Regularity for weak solutions} \label{sec:PDEproof}
We first prove Theorem~\ref{thm:PDE}. We assume $A:\Omega\times \Rn\to\Rn$ satisfies Assumptions~\ref{ass:A} and $A^{(-1)}$ satisfies \wVA{}, and consider a local weak solution $u\in W^{1,\phi}_{\loc}(\Omega)$ to \eqref{mainPDE}. 
By Proposition~\ref{prop:AphiVA}(1), 
\begin{equation}\label{wva1aphi}
|A(x,\xi)-A(y,\xi)|\lesssim \omega(r)^\frac{1}{p'}\big((\phi')^-_{B_r}(|\xi|)+1\big), 
\end{equation}
for all $\xi\in\R^n$ satisfying $\phi_{B_r}^-(|\xi|)\in \left[0,|B_r|^{-1+\epsilon}\right]$.
Moreover, by Proposition~\ref{prop:AphiVA}(2), the condition of Proposition~\ref{prop:phi0} holds with $\tilde L=\tilde L(n,p,q,L,\bL)$. 

Recall that $\bphi$ and $\bA$ were constructed in \eqref{phi0} and \eqref{A0def}. Let $\bu\in u+ W^{1,\bphi}_0(B_r)$ be a weak solution to 
%the $\bA$-equation 
\eqref{eqA0}. 
By Proposition~\ref{prop:phi0}(3), $u\in W^{1,\bphi}(B_r)$, so it is a suitable 
boundary value.
The following lemma is a generalization of \cite[Lemma~5.15]{HasO22} to the non-Uhlenbeck case.
Note that 
we use only the \aone{} condition of $A^{(-1)}$ in the proof.

\begin{lemma}\label{lem:gradientestimates} 
Let $A$, $\phi$, $u$, $\bL$, $\sigma$, $B_{2r}$ and $\bu$ be as above. %Assume that $\intA$ satisfies \wVA{}.
Then
%%% FAKE ENUMERATE ENVIRONMENT
\begin{align} \tag{1}
\fint_{B_{r}}\phi(x,|Du|)\,dx 
& \le \bigg(\fint_{B_{r}}\phi(x,|Du|)^{1+\sigma}\,dx\bigg)^{\frac{1}{1+\sigma}} \\
\notag
&\lesssim
\phi^-_{B_{2r}} \bigg(\fint_{B_{2r}}|Du|\,dx\bigg)+1
\lesssim 
\bphi \bigg(\fint_{B_{2r}}|Du|\,dx\bigg)+1,
\end{align}
\begin{align} \tag{2}
\fint_{B_r}\phi(x,|D\bu|)\,dx 
&\leq \left(\fint_{B_r}\phi(x,|D\bu|)^{1+\frac{\sigma}{2}}\,dx\right)^{\frac{2}{2+\sigma}}
\lesssim 
\left(\fint_{B_r}\phi(x,|Du|)^{1+\frac{\sigma}{2}}\,dx+1\right)^{\frac{2}{2+\sigma}},
\end{align}
\begin{align} \tag{3}
\fint_{B_r}|D\bu|\,dx 
\lesssim 
\fint_{B_{2r}}|Du|\,dx+1.
\end{align}
%\begin{align} \tag{1}
%\fint_{B_{r}}\phi(x,|Du|)\,dx 
%& \le \bigg(\fint_{B_{r}}\phi(x,|Du|)^{1+\sigma}\,dx\bigg)^{\frac{1}{1+\sigma}} \\
%\notag
%&\le c\, \bigg\{
%\phi^-_{B_{2r}} \bigg(\fint_{B_{2r}}|Du|\,dx\bigg)+1\bigg\} \le c\, \bigg\{
%\bphi \bigg(\fint_{B_{2r}}|Du|\,dx\bigg)+1\bigg\},
%\end{align}
%\begin{align} \tag{2}
%\fint_{B_r}\phi(x,|D\bu|)\,dx 
%&\leq \left(\fint_{B_r}\phi(x,|D\bu|)^{1+\frac{\sigma}{2}}\,dx\right)^{\frac{2}{2+\sigma}}
%\le 
%c \left(\fint_{B_r}\phi(x,|Du|)^{1+\frac{\sigma}{2}}\,dx+1\right)^{\frac{1}{1+\sigma}},
%\end{align}
%\begin{align} \tag{3}
%\fint_{B_r}|D\bu|\,dx 
%\le 
%c\left(\fint_{B_{2r}}|Du|\,dx+1\right).
%\end{align}
The implicit constants depend only on $n$, $p$, $q$, $L$ and $\bL$.
\end{lemma}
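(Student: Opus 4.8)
The plan is to prove the three estimates separately: (1) is Jensen's inequality followed by the higher integrability of Theorem~\ref{thm:holder} and a growth comparison between $\phi$, $\phi^-_{B_{2r}}$ and $\bphi$; (2) is an application of the Calder\'on--Zygmund Lemma~\ref{lem:CZ} to a superlinearly rescaled weight; and (3) follows from the $\bphi$-quasiminimality of $\bu$ combined with (1) and the growth comparison.

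For (1), the first inequality is just Jensen's inequality for $s\mapsto s^{1+\sigma}$. Since $|B_{2r}|\le 1$ and, by \eqref{Dusigmale2}, $\varrho_\phi(Du;B_{2r})\le 1$, i.e.\ $\|Du\|_{L^\phi(B_{2r})}\le 1$, the second inequality is exactly the higher integrability estimate of Theorem~\ref{thm:holder} on $B_r\subset B_{2r}$. For the third inequality I set $s:=\fint_{B_{2r}}|Du|\,dx$ and first observe, by Jensen's inequality for $\phi^-_{B_{2r}}$ (which satisfies \ainc{1}) together with \eqref{Dusigmale2}, that $\phi^-_{B_{2r}}(s)\lesssim\fint_{B_{2r}}\phi(x,|Du|)\,dx\lesssim|B_r|^{-1}$. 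Then I split into the three regimes cut out by $t_1,t_2$: if $s\le t_1$ then $\phi^-_{B_{2r}}(s)\le\phi^-_{B_r}(t_1)=\omega(r)\le 1$; if $t_1\le s\le t_2$ then $\phi^-_{B_{2r}}(s)\le\phi(x_0,s)\le\tL\bphi(s)$ by Proposition~\ref{prop:phi0}(2); and if $s\ge t_2$ then $\bphi(s)\ge\bphi(t_2)\ge\tL^{-1}\phi^-_{B_r}(t_2)\approx\tL^{-1}|B_r|^{-1}\gtrsim\phi^-_{B_{2r}}(s)$, again by Proposition~\ref{prop:phi0}(2). In every case $\phi^-_{B_{2r}}(s)\lesssim\bphi(s)+1$.

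For (2), the first inequality is Jensen's inequality for $s\mapsto s^{1+\sigma/2}$. For the second I apply Lemma~\ref{lem:CZ} with $\bA$ and its growth function $\bphi$ (Lemma~\ref{lem:regular}) and the weight $\theta:=\theta_0^{1+\sigma/2}$, where $\theta_0(x,s):=\phi(x,\bphi^{-1}(s))$. By Proposition~\ref{prop:phi0}(4), $\theta_0$ satisfies \azero{}, \ainc{1} and \adec{q_1/p}; since $\phi$ satisfies \aone{} and $\bphi$ is autonomous, $\theta_0$ inherits \aone{}; and because $|B_r|\le 1$, taking the $(1+\tfrac\sigma2)$-th power preserves \azero{}, \ainc{1+\sigma/2}, \adec{} and \aone{}, so $\theta\in\Phiw(B_r)$ satisfies the hypotheses of Lemma~\ref{lem:CZ}. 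Since $\theta(x,\bphi(t))=\phi(x,t)^{1+\sigma/2}$, estimate \eqref{Dusigmale1} gives $\int_{B_r}\theta(x,\bphi(|Du|))\,dx\le\tfrac12=:\kappa$, hence $\kappa^{q_1/p_1-1}+1\le 2$ (recall $p_1=p$), and the conclusion of Lemma~\ref{lem:CZ}, rewritten via $\theta(x,\bphi(|D\bu|))=\phi(x,|D\bu|)^{1+\sigma/2}$, reads $\fint_{B_r}\phi(x,|D\bu|)^{1+\sigma/2}\,dx\lesssim\fint_{B_r}\phi(x,|Du|)^{1+\sigma/2}\,dx+1$; raising to the power $\tfrac2{2+\sigma}$ finishes (2). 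I expect this step to be the main obstacle: one must rescale so that $\theta\circ\bphi$ is \emph{exactly} the superlinear power $\phi^{1+\sigma/2}$ and then verify that $\theta=\theta_0^{1+\sigma/2}$ genuinely satisfies every structural hypothesis of Lemma~\ref{lem:CZ}, in particular \aone{}, which is where the smallness $|B_r|\le 1$ enters and where higher integrability of $Du$ is transferred to $\bu$.

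For (3), I test equation \eqref{eqA0} for $\bu$ with $\zeta=\bu-u\in W^{1,\bphi}_0(B_r)$ and argue as in Section~\ref{subsect:quasiminimzer}, using \eqref{phiAequiv} for $\bA$ and $\bphi$, coercivity and Young's inequality, to get $\fint_{B_r}\bphi(|D\bu|)\,dx\lesssim\fint_{B_r}\bphi(|Du|)\,dx$. Then Jensen's inequality for $\bphi$, the comparison $\bphi(t)\lesssim\phi(x,t)+1$ on $B_r\times[0,\infty)$ (Proposition~\ref{prop:phi0}(3) for $t\ge t_1$, and $\bphi(t)\le\bphi(t_1)\lesssim 1$ for $t\le t_1$, using $\phi^+_{B_r}(t_1)\lesssim\phi^-_{B_r}(t_1)=\omega(r)$ from Proposition~\ref{prop:AphiVA}(2)), and part~(1) yield
\[
\bphi\Big(\fint_{B_r}|D\bu|\,dx\Big)\lesssim\fint_{B_r}\phi(x,|Du|)\,dx+1\lesssim\bphi\Big(\fint_{B_{2r}}|Du|\,dx\Big)+1\lesssim\bphi\Big(\fint_{B_{2r}}|Du|\,dx+1\Big),
\]
and since $\bphi$ satisfies \inc{p} this inequality inverts to (3). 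The growth comparisons needed in (1) and (3) are technical but routine once Propositions~\ref{prop:phi0} and~\ref{prop:AphiVA} are in place.
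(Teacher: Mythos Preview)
Your proof is correct and follows essentially the same route as the paper's: H\"older/Jensen plus the higher integrability of Theorem~\ref{thm:holder} for (1), the Calder\'on--Zygmund Lemma~\ref{lem:CZ} applied to $\theta=\theta_0^{1+\sigma/2}$ for (2), and the energy estimate $\fint_{B_r}\bphi(|D\bu|)\lesssim\fint_{B_r}\bphi(|Du|)$ together with Jensen and (1) for (3). The only cosmetic difference is that for the last inequality in (1) the paper observes directly that $\fint_{B_{2r}}|Du|\,dx\lesssim t_2$ (from \eqref{Dusigmale2}) and invokes Proposition~\ref{prop:phi0}(2), whereas you spell out the full three-case split in $s$; both arguments are valid.
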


\begin{proof}
By H\"older's inequality and higher integrability in Theorem~\ref{thm:holder}, we have
\[
\fint_{B_{r}}\phi(x,|Du|)\,dx 
\le 
\left(\fint_{B_{r}}\phi(x,|Du|)^{1+\sigma}\,dx\right)^{\frac{1}{1+\sigma}} 
\lesssim 
\phi_{B_{2r}}^- \left(\fint_{B_{2r}}|Du|\,dx\right)+1.
\]
From \eqref{Dusigmale2} we obtain that
$\fint_{B_{2r}}|Du|\,dx\lesssim (\phi^-_{B_r})^{-1} (|B_r|^{-1}) =t_2$. Hence (1) 
follows by Proposition~\ref{prop:phi0}(2). %and \azero{}. 

By Proposition~\ref{prop:phi0}(4), we see that $\theta(x,t):=\phi(x,\bphi^{-1}(t))^{1+\sigma/2}$ satisfies \azero, \ainc{1+\sigma/2} and \adec{(1+\sigma/2)q_1/p}. Moreover, \aone{} of $\phi$ implies \aone{} of $\theta$. Therefore, in view of Lemma~\ref{lem:CZ} with \eqref{Dusigmale1} we have (2). 

Finally, by Jensen's inequality, %the quasiminimizing property 
the standard energy estimate for \eqref{eqA0}, i.e.\ the estimate in Lemma~\ref{lem:CZ} with $\theta(x, t)\equiv t$, and (1), we have
\[\begin{split}
\bphi\left(\fint_{B_r}|D\bu|\,dx\right) & \le \fint_{B_r}\bphi(|D\bu|)\,dx %\textcolor{Red}{+1} 
\lesssim \fint_{B_r}\bphi(|Du|)\,dx \lesssim \fint_{B_r}\phi(x,|Du|)\,dx+1\\
& \lesssim \bphi\left(\fint_{B_r}|Du|\,dx+1 \right),
\end{split}\]
which implies (3) since $\bphi$ is strictly increasing.
\end{proof}

We next estimate the difference of the gradient between $u$ and $\bu$ in the $L^1$-sense. 
This generalizes Lemma~6.2 and Corollary~6.3 of \cite{HasO22} to the non-Uhlenbeck case.

\begin{lemma}\label{lem:comparison}
Let $A$, $\phi$, $u$, $\bL$, $\epsilon$, $\omega$, $B_{2r}$, $\bphi$, $\bA$ and $\bu$ be as above. %Assume that $\intA$ satisfies \wVA{}.
Then
\[
\fint_{B_r} \frac{\bphi'(|Du|+|D\bu|)}{|Du|+|D\bu|}|Du-D\bu|^2\,dx 
\le c\big(\omega(r)^\frac{p-1}{q_1} + r^{\gamma}\big)\left(\bphi\left(\fint_{B_{2r}}|Du|\,dx \right)+1\right)
\]
and
\begin{equation*}%\label{DuD\buL1}
\fint_{B_r} |Du-D\bu|\,dx \le c\big(\omega(r)^\frac{p-1}{2q_1^2} + r^{\frac{\gamma}{2q_1}}\big)\left(\fint_{B_{2r}}|Du|\,dx +1\right).
\end{equation*}
for some $c\ge1$ and $\gamma\in(0,1)$ depending only on $n$, $p$, $q$, $L$ and $\bL$.
\end{lemma}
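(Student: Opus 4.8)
The plan is to test the weak formulations of $\div A(x,Du)=0$ and $\div\bA(D\bu)=0$ against the admissible function $\zeta=u-\bu\in W^{1,\bphi}_0(B_r)$ and subtract. Since
\[
\int_{B_r}\bigl(A(x,Du)-\bA(D\bu)\bigr)\cdot D(u-\bu)\,dx=0,
\]
we can rewrite this as
\[
\int_{B_r}\bigl(\bA(Du)-\bA(D\bu)\bigr)\cdot D(u-\bu)\,dx
=\int_{B_r}\bigl(\bA(Du)-A(x,Du)\bigr)\cdot D(u-\bu)\,dx.
\]
For the left-hand side, the uniform monotonicity \eqref{monotonicity} applied to $\bA$ with growth function $\bphi$ (valid since $\bphi$ is a growth function of $\bA$ by Lemma~\ref{lem:regular}) gives exactly the quantity $\fint_{B_r}\frac{\bphi'(|Du|+|D\bu|)}{|Du|+|D\bu|}|Du-D\bu|^2\,dx$ up to a constant. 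So the whole game is to bound the right-hand side error term.

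\textbf{Bounding the error term.}
The pointwise bound $|\bA(Du)-A(x,Du)|$ must be split according to the size of $|Du|$. When $2t_1\le|Du|\le\frac12 t_2$ we have $\bA(\xi)=A(x_0,\xi)$, so the error is $|A(x_0,Du)-A(x,Du)|$, which by \eqref{wva1aphi} (i.e.\ Proposition~\ref{prop:AphiVA}(1), using that $\phi^-_{B_r}(|Du|)\le|B_r|^{-1}\le|B_r|^{-1+\epsilon}$ is \emph{not} quite what we need — rather one checks $t_2=(\phi^-_{B_r})^{-1}(|B_r|^{-1})$ so the relevant range $\phi^-_{B_r}(|Du|)\le|B_r|^{-1}$ requires a small additional argument or the use of higher integrability to restrict to $\phi^-_{B_r}(|Du|)\le|B_r|^{-1+\epsilon}$ on the bulk) is $\lesssim\omega(r)^{1/p'}\bigl((\phi')^-_{B_r}(|Du|)+1\bigr)$. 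In the transition/tail regions $|Du|<2t_1$ and $|Du|>\frac12 t_2$ one uses instead the crude bound from the construction \eqref{A0def}: there $|\bA(Du)|+|A(x,Du)|\lesssim\bphi'(|Du|)$ directly (by \eqref{Agrowth} and the definition of $\bphi$), and on the small set $\{|Du|>\frac12 t_2\}$ higher integrability from Theorem~\ref{thm:holder} controls the measure — this produces the $r^\gamma$ term via $|B_r|^{\text{power}}$ estimates, analogously to the Uhlenbeck argument in \cite{HasO22}. After multiplying by $|D(u-\bu)|$ and applying Young's inequality for the $\Phi$-function $\bphi$ (splitting off $\epsilon_0\,\bphi'(|Du|+|D\bu|)|D(u-\bu)|^2/(|Du|+|D\bu|)$ so it can be absorbed into the left-hand side), one is left with $\bigl(\omega(r)^{(p-1)/q_1}+r^\gamma\bigr)\fint_{B_{2r}}\bigl(\bphi(|Du|)+1\bigr)\,dx$, where the exponent $\frac{p-1}{q_1}$ comes from tracking $\omega(r)^{1/p'}$ through \dec{q_1-1} of $\bphi'$ and the conjugation exponents.

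\textbf{From the $\bphi'$-estimate to the $L^1$-estimate.}
Having the first inequality, with the right-hand side written as $c\,\Lambda(r)\bigl(\bphi(\fint_{B_{2r}}|Du|\,dx)+1\bigr)$ where $\Lambda(r)=\omega(r)^{(p-1)/q_1}+r^\gamma$, the second inequality follows by a Hölder-and-shift argument: split $B_r$ into where $|Du|+|D\bu|\le\lambda$ and where it exceeds $\lambda$, for a threshold $\lambda\approx\fint_{B_{2r}}|Du|\,dx+1$. On the small-gradient set use Cauchy–Schwarz with the weight $\bphi'(\lambda)/\lambda$, and on the large-gradient set use \dec{q_1-1} to compare $\bphi'(|Du|+|D\bu|)/(|Du|+|D\bu|)$ with its value at the threshold. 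Combining with Lemma~\ref{lem:gradientestimates}(3) to replace $\fint_{B_r}|D\bu|$ by $\fint_{B_{2r}}|Du|+1$, and using that $\bphi$ satisfies \inc{p}, \dec{q_1}, one extracts $\fint_{B_r}|Du-D\bu|\,dx\lesssim\Lambda(r)^{1/(2q_1)}\bigl(\fint_{B_{2r}}|Du|\,dx+1\bigr)$, and $\Lambda(r)^{1/(2q_1)}\le\omega(r)^{(p-1)/(2q_1^2)}+r^{\gamma/(2q_1)}$.

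\textbf{Main obstacle.}
The delicate point is the treatment of the transition and tail regions $\{|Du|<2t_1\}$ and $\{|Du|>\frac12 t_2\}$, where $\bA$ is no longer equal to $A(x_0,\cdot)$. Here one cannot use \wVA{} at all; one must rely purely on the $(p,q)$-growth of both $\bA$ and $A(x,\cdot)$ together with the fact that on $\{|Du|>\frac12 t_2\}$ the threshold $t_2$ is chosen precisely so that $\phi^-_{B_r}(t_2)=|B_r|^{-1}$, making that superlevel set small enough (quantitatively, of measure controlled by the higher-integrability exponent $\sigma$) that its contribution is absorbed into an $r^\gamma$ factor. Getting the bookkeeping of exponents right so that the losses from Young's inequality, from \adec{q_1-1}, and from the measure estimate all combine into the single clean exponent $\frac{\gamma}{2q_1}$ (respectively $\frac{p-1}{2q_1^2}$) is the technical heart of the proof; it parallels but is somewhat more intricate than \cite[Lemma~6.2, Corollary~6.3]{HasO22}, because the error $\bA(\xi)-A(x,\xi)$ is not simply a modulus-of-continuity term everywhere but has a genuinely different structure on the two transition annuli.
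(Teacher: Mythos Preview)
Your overall strategy matches the paper's: test with $u-\bu$, use the monotonicity of $\bA$, and split $B_r$ by the size of $|Du|$ (the paper's $E_1/E_2$ split at $\phi^-_{B_r}(|Du|)\approx|B_r|^{-1+\epsilon}$ is essentially your three-region split, with $E_1$ covering both your ``small'' and ``middle'' regions). Two execution details differ. For the first inequality the paper does \emph{not} absorb a quadratic piece into the left-hand side; on $E_1$ it applies Young's inequality with $\phi^-_{B_r}$ and $(\phi^-_{B_r})^*$ together with $\phi^*(\phi'(t))\le t\phi'(t)$ (Proposition~\ref{prop0}(4)) to reach $\omega(r)^{(p-1)/q_1}[\phi(x,|Du|)+\phi(x,|D\bu|)+1]$ directly, then integrates via Lemma~\ref{lem:gradientestimates}; this avoids the leftover factor $\frac{|Du|+|D\bu|}{\bphi'(|Du|+|D\bu|)}$ that your absorption scheme produces. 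Note also that the exponent $\tfrac{p-1}{q_1}$ actually arises from the sub-region $|Du|\le 2t_1$ (where $|Du|\lesssim\omega(r)^{1/q_1}$ and hence $|A|,|\bA|\lesssim\omega(r)^{(p-1)/q_1}$), not from tracking $\omega(r)^{1/p'}$ through \dec{q_1-1}; in the range $2t_1\le|Du|\le\tfrac12 t_2$ one has the better factor $\omega(r)^{1/p'}$. On $E_2$ the paper inserts the factor $[|B_r|^{1-\epsilon}\phi^-_{B_r}(|Du|)]^{\sigma/2}\gtrsim 1$ and uses the $L^{1+\sigma/2}$-integrability of Lemma~\ref{lem:gradientestimates}(2) directly, rather than a measure estimate. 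For the second inequality the paper does not use your threshold-and-Cauchy--Schwarz scheme; it applies Proposition~\ref{prop000} pointwise with $\kappa=(\omega(r)^{(p-1)/q_1}+r^\gamma)^{1/2}$, integrates, and concludes by Jensen and \dec{q_1} of $\bphi$. This is shorter and uniformly robust for $1<p<2$, where a direct Cauchy--Schwarz with weight $\bphi'(V)/V$ requires extra care because $V/\bphi'(V)$ need not be bounded.
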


\begin{proof}
Using the monotonicity of $\bA$ \eqref{monotonicity} and taking $u-\bu\in W^{1,\phi}_0(B_r)\subset W^{1,\bphi}_0(B_r)$ as a test function in the weak forms of \eqref{mainPDE} and \eqref{eqA0}, we find that
\[\begin{split}
\fint_{B_r} \frac{\bphi'(|Du|+|D\bu|)}{|Du|+|D\bu|}|Du-D\bu|^2\,dx &\lesssim \fint_{B_r} ( \bA(D\bu)-\bA(Du) )\cdot(Du-D\bu)\,dx\\
&= \fint_{B_r} ( A(x,Du)-\bA(Du) )\cdot(Du-D\bu)\,dx\\
&\le \fint_{B_r} |A(x,Du)-\bA(Du)|\,|Du-D\bu|\,dx.
\end{split}\]

We split $B_r$ into two regions defined by 
\[
\begin{split}
%E_1&:=B_r \cap\{\phi^-_{B_r}(|Du|)\leq 2^{q_1}K \omega(r)\},\\
E_1:=B_r \cap\Big\{ \phi^-_{B_r}(|Du|)\leq \frac{|B_{r}|^{-1+\epsilon}}{2^{p}}\Big\}
\quad\text{and}\quad
E_2:=B_r \cap\Big\{\frac{|B_{r}|^{-1+\epsilon}}{2^p }<\phi^-_{B_r}(|Du|)\Big\}.
\end{split}
\]
In the set $E_1$, we have $|Du|\le \frac{t_2}{2}$. If also $2t_1 \le |Du|$, then it 
follows from $\bA(\xi)=A(x_0, \xi)$ and \eqref{wva1aphi} that 
\begin{align*}
|A(x,Du)- \bA(Du)|=|A(x,Du)- A(x_0,Du)|
\lesssim 
\omega(r)^{1-\frac1p} ((\phi')^-(|Du|)+1). 
\end{align*}
If, on the other hand, $|Du|\le 2t_1$, then \dec{q_1} and \azero{} of $\phi$ imply that 
$|Du|\lesssim \omega(r)^\frac1{q_1}$. 
Therefore by \inc{p-1} and \azero{} of $\phi'$ and $\bphi'$ and the growth function property 
\eqref{Agrowth} of $A$ and $\bA$, 
we see that $|A(x,Du)- \bA(Du)|\lesssim \omega(r)^\frac{p-1}{q_1}$.
Applying Young's inequality with $\phi^-_{B_r}$ and $(\phi^-_{B_r})^*$ and  
$\phi^*(x,\phi'(x,t))\le \phi(x,t)$ as well as $t\lesssim \phi(x,t)+1$, 
we find that 
\[\begin{split}
|A(x,Du)-\bA(Du)|\,|Du-D\bu| \chi_{E_1}
&\lesssim 
\omega(r)^\frac{p-1}{q_1} [(\phi')^-(|Du|)|Du-D\bu| + |Du-D\bu|]\\
&\lesssim 
\omega(r)^\frac{p-1}{q_1} [\phi(x,|Du|)+ \phi(x,|D\bu|)+1].
\end{split}\]
Next we integrate this inequality over $B_r$ and use Lemma~\ref{lem:gradientestimates}(1)\&(2):
\[\begin{split}
\fint_{B_r}|A(x,Du)-\bA(Du)|&|Du-D\bu| \chi_{E_1}\,dx
\lesssim 
\omega(r)^\frac{p-1}{q_1} \left(\bphi\left(\fint_{B_{2r}}|Du|\,dx\right)+1\right). 
\end{split}\]

In the set $E_2$, $1\lesssim |B_{r}|^{1-\epsilon}\phi^-_{B_r}(|Du|)$. Then applying 
Young's inequality and Proposition~\ref{prop:phi0}(3) we have 
\[\begin{split}
|A(x,Du)-\bA(Du)|\,|Du-D\bu| \chi_{E_2}
&\lesssim
\phi'(x,|Du|)|Du-D\bu|
\lesssim
\phi(x,|Du|) + \phi(x,|D\bu|)\\
&\lesssim
\left[|B_{2r}|^{1-\epsilon}\phi^-_{B_r}(|Du|)\right]^{\frac{\sigma}{2}} [\phi(x,|Du|) + \phi(x,|D\bu|)]\\
&\lesssim
r^{\frac{n(1-\epsilon)\sigma}{2}}\big( \phi(x,|Du|)^{1+\frac{\sigma}{2}}+\phi(x,|D\bu|)^{1+\frac{\sigma}{2}}\big). 
\end{split}\]
Integrating this inequality over $B_r$ and using Lemma~\ref{lem:gradientestimates}(2) and the definition of 
$\epsilon$ from \eqref{epsilon}, we find that 
\[
\fint_{B_r}|A(x,Du)- \bA(Du)| |Du-D\bu| \chi_{E_2}\,dx
\lesssim
r^{\frac{n(4+\sigma)\sigma}{4(2+\sigma)}} 
\left(\fint_{B_r}\phi(x,|Du|)^{1+\frac{\sigma}{2}}\,dx\right)^{\frac{\sigma}{2+\sigma}+\frac{2}{2+\sigma}}.
\]
On one hand, by Lemma~\ref{lem:gradientestimates}(1), we have
\[
\left(\fint_{B_r}\phi(x,|Du|)^{1+\frac{\sigma}{2}}\,dx\right)^{\frac{2}{2+\sigma}}
\lesssim \bphi\left( \fint_{B_{2r}}|Du|\,dx\right)+1.
\]
On the other hand, by \eqref{Dusigmale1},
\[
\left(\fint_{B_r}\phi(x,|Du|)^{1+\frac{\sigma}{2}}\,dx\right)^{\frac{\sigma}{2+\sigma}}\le 
 |B_r|^{-\frac{\sigma}{2+\sigma}} \lesssim r^{-\frac{4n\sigma}{4(2+\sigma)}}.
\]
Therefore, the previous three inequalities imply that
\[
\fint_{B_r} |A(x,Du)-\bA(Du)|\,|Du-D\bu| \chi_{E_2}\,dx
 \lesssim 
r^{\frac{n\sigma^2}{4(2+\sigma)}}
\left(\bphi\left(\fint_{B_{2r}}|Du|\,dx\right)+1\right).
\]
Combining the results of this and the previous paragraph, 
we have the first claim of the lemma, 
with $\gamma:=\frac{n\sigma^2}{4(2+\sigma)}$. 

Next, set $\omega_0(r):=\omega(r)^\frac{p-1}{q_1}+r^\gamma $.
Applying Proposition~\ref{prop000}(3) with 
$\kappa=\omega_0(r)^{\frac{1}{2}}$, Proposition~\ref{prop:phi0}(3), Lemma~\ref{lem:gradientestimates}(1)\&(2) and the first part of the lemma, we find that
\[\begin{split}
\fint_{B_r}&\bphi(|Du-D\bu|)\,dx \\
&\lesssim \omega_0(r)^\frac{1}{2} \fint_{B_r}[\bphi(|Du|)+\bphi(|D\bu|)] \,dx 
+ \omega_0(r)^{-\frac{1}{2}}\fint_{B_r}\frac{\bphi'(|Du|+|D\bu|)}{|Du|+|D\bu|}|Du-D\bu|^2\,dx \\
&\lesssim 
\omega_0(r)^\frac{1}{2} \fint_{B_r}[\phi(x,|Du|) + \phi(x,|D\bu|)+1]\,dx
+ \omega_0(r)^\frac{1}{2}\left(\bphi\left(\fint_{B_{2r}}|Du|\,dx \right)+1\right)\\
&\lesssim \omega_0(r)^\frac{1}{2}\left(\bphi\left(\fint_{B_{2r}}|Du|\,dx \right)+1\right).
\end{split}\]
Therefore, by Jensen's inequality and \dec{q_1} of $\bphi$, we have 
\[
\bphi\left(\fint_{B_r}|Du-D\bu|\, dx\right)
\le \fint_{B_r}\bphi(|Du-D\bu|)\, dx
\lesssim \bphi\left(\omega_0(r)^{\frac{1}{2q_1}}\left(\fint_{B_{2r}}|Du|\,dx+1\right)\right).
\]
Since $\bphi$ is strictly increasing, this implies the second claim of the lemma.
\end{proof}

Now, the regularity result for weak solutions follows.

\begin{proof}[Proof of Theorem~\ref{thm:PDE}] 
We can prove the theorem using Lemmas~\ref{lem:holder}, \ref{lem:gradientestimates} and \ref{lem:comparison}  using a standard iteration argument. The proof is exactly the same as \cite[Theorem 7.2]{HasO22}, see also \cite{AM1,BarCM18}, so we omit it. 
\end{proof}

%%%%%%%%%%%%%%%%%%%%%%%%%%%%%%%%%%%%%%%%%%%%%%%%%%%%%%%%%%%%%%%%%%%%
\subsection{Regularity for minimizers}%\label{sec:functionalproof}

In this subsection we prove Theorem~\ref{thm:functional}. The method is almost the same as for
Theorem~\ref{thm:PDE} except for the comparison step. Hence we will take advantage of 
many parts from the previous subsection. 

We assume $F:\Omega\times \Rn\to\R$ satisfies Assumption~\ref{ass:F} and \wVA{}, and  consider a local minimizer $u\in W^{1,\phi}_{\loc}(\Omega)$ of \eqref{mainfunctional}. Recall that the minimizer $u$ is also a weak solution to \eqref{mainPDE} with $A=D_\xi F$, but 
\wVA{} of $F$ does not imply \wVA{} of $A:=D_\xi F$. On the other hand, by Proposition~\ref{prop:fphiVA}(1) we have
%implies  for any $x,y\in B_r\cap\Omega$ with $r\in (0,1]$ that 
\begin{equation}\label{eq:VA1functional}
F^+_{B_r}(\xi)-F^-_{B_r}(\xi) \lesssim \omega(r)\big(\phi^-_{B_r}(|\xi|)+1\big), 
\end{equation}
for all $\xi\in\R^n$ satisfying 
$\phi_{B_r}^-(|\xi|)\in \left[0,|B_r|^{-1+\epsilon}\right]$.
Moreover, by the Proposition~\ref{prop:fphiVA}(2), the condition of Proposition~\ref{prop:phi0} holds with $\tilde L=\tilde L(n,p,q,L,\bL)$. 

We use $\bphi$ and $\bF$ constructed in \eqref{phi0} and \eqref{F0def}. Let $\bu\in u+ W^{1,\bphi}_0(B_r)$ be a weak solution to \eqref{eqA0} with $\bA:=D_\xi \bF$.
Lemma~\ref{lem:gradientestimates} holds also for the minimizer $u$, 
since the lemma needed only \aone{}, which holds by Propositions~\ref{prop:Aphiaone} and~\ref{prop:fphiVA}.
We prove the following analogue of Lemma~\ref{lem:comparison}, where 
\wVA{} is used in a different way.

\begin{lemma}\label{lem:comparisonfunctional}
Let $F$, $\phi$, $u$, $\bL$, $\epsilon$, $\omega$, $B_{2r}$, $\bphi$, $\bF$ and $\bu$ be as above.
Then
\[
\fint_{B_r} \frac{\bphi'(|Du|+|D\bu|)}{|Du|+|D\bu|}|Du-D\bu|^2\,dx \le c\big(\omega(r)^\frac{p}{q_1} + r^{\gamma}\big)\left(\bphi\left(\fint_{B_{2r}}|Du|\,dx \right)+1\right)
\]
and
\[
\fint_{B_r} |Du-D\bu|\,dx \le c\big(\omega(r)^\frac{p}{2q_1^2} + r^{\frac{\gamma}{2q_1}}\big)
\left(\fint_{B_{2r}}|Du|\,dx +1\right)
\]
for some $c\ge1$ and $\gamma\in(0,1)$ depending only on $n$, $p$, $q$, $L$ and $\bL$.
\end{lemma}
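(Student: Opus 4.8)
I plan to follow the proof of Lemma~\ref{lem:comparison} as closely as possible; the one essential change is that the comparison must be carried out at the level of the \emph{energy} rather than the equation, because the hypothesis \wVA{} is available for $F$ (in the form \eqref{eq:VA1functional}) but not for its gradient $A=D_\xi F$, so the monotonicity‑plus‑pointwise argument of Lemma~\ref{lem:comparison} is unavailable. First I would reduce the left‑hand side to an energy difference. Since $\bphi$ is a growth function of $\bF$, the ellipticity \eqref{Aellipticity} with $\bA=D_\xi\bF$ holds, so $\bF$ is convex, and integrating \eqref{monotonicity} (with $A=\bA=D_\xi\bF$) along the segment between $\zeta$ and $\xi$ yields the Taylor‑type bound $\bF(\xi)-\bF(\zeta)-D_\xi\bF(\zeta)\cdot(\xi-\zeta)\gtrsim\frac{\bphi'(|\xi|+|\zeta|)}{|\xi|+|\zeta|}|\xi-\zeta|^2$. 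Applying this with $\xi=Du$, $\zeta=D\bu$, integrating over $B_r$, and using $\fint_{B_r}D_\xi\bF(D\bu)\cdot(Du-D\bu)\,dx=0$ (test the weak form of \eqref{eqA0} against $u-\bu\in W^{1,\bphi}_0(B_r)$) gives
\[
\fint_{B_r}\frac{\bphi'(|Du|+|D\bu|)}{|Du|+|D\bu|}|Du-D\bu|^2\,dx\lesssim\fint_{B_r}\big[\bF(Du)-\bF(D\bu)\big]\,dx.
\]
I then write $\bF(Du)-\bF(D\bu)=[\bF(Du)-F(x,Du)]+[F(x,Du)-F(x,D\bu)]+[F(x,D\bu)-\bF(D\bu)]$; extending $\bu$ by $u$ outside $B_r$ gives an admissible competitor with $\supp(u-\bu)\Subset\Omega$, so the minimality \eqref{mainfunctional} of $u$ forces $\int_{B_r}F(x,Du)\,dx\le\int_{B_r}F(x,D\bu)\,dx$, i.e.\ the middle term has nonpositive integral. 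It remains to bound $\fint_{B_r}|\bF(Du)-F(x,Du)|\,dx$ and $\fint_{B_r}|F(x,D\bu)-\bF(D\bu)|\,dx$.

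These two terms I would estimate by exactly the region decomposition of Lemma~\ref{lem:comparison}, with \eqref{eq:VA1functional} replacing \eqref{wva1aphi}. For the first, split $B_r=E_1\cup E_2$ with $E_1:=B_r\cap\{\phi^-_{B_r}(|Du|)\le 2^{-p}|B_r|^{-1+\epsilon}\}$. On $E_1$ one has $|Du|\le t_2/2$ (as in Lemma~\ref{lem:comparison}); if also $|Du|\ge 2t_1$ then $\bF(Du)=F(x_0,Du)$ and \eqref{eq:VA1functional} gives $|\bF(Du)-F(x,Du)|\le F^+_{B_r}(Du)-F^-_{B_r}(Du)\lesssim\omega(r)(\phi^-_{B_r}(|Du|)+1)\le\omega(r)(\phi(x,|Du|)+1)$; if instead $|Du|<2t_1$, then \azero{} and \dec{q_1} give $t_1\lesssim\omega(r)^{1/q_1}$, whence by \azero{} and \inc{p} of $\phi$ and $\bphi$ and $F\approx\phi$, $\bF\approx\bphi$ (cf.\ \eqref{phifequiv}) we get $|\bF(Du)-F(x,Du)|\lesssim|Du|^p\lesssim\omega(r)^{p/q_1}$. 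Either way $|\bF(Du)-F(x,Du)|\lesssim\omega(r)^{p/q_1}(\phi(x,|Du|)+1)$ on $E_1$, and integrating and invoking Lemma~\ref{lem:gradientestimates}(1) gives a contribution $\lesssim\omega(r)^{p/q_1}(\bphi(\fint_{B_{2r}}|Du|\,dx)+1)$. On $E_2$ one has $1\lesssim|B_r|^{1-\epsilon}\phi^-_{B_r}(|Du|)$, and the crude bound $|\bF(Du)-F(x,Du)|\lesssim\phi(x,|Du|)+1$ (Proposition~\ref{prop:phi0}(3)) upgrades to $\lesssim r^{n(1-\epsilon)\sigma/2}(\phi(x,|Du|)^{1+\sigma/2}+1)$; integrating and using Lemma~\ref{lem:gradientestimates}(1)--(2), \eqref{epsilon} and \eqref{Dusigmale1} exactly as in Lemma~\ref{lem:comparison} produces a contribution $\lesssim r^{\gamma}(\bphi(\fint_{B_{2r}}|Du|\,dx)+1)$ with $\gamma=\frac{n\sigma^2}{4(2+\sigma)}$. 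The term $\fint_{B_r}|F(x,D\bu)-\bF(D\bu)|\,dx$ is treated identically with $D\bu$ in place of $Du$, the higher integrability of $\phi(x,|Du|)$ being replaced by Lemma~\ref{lem:gradientestimates}(2) for $\phi(x,|D\bu|)$. Summing the contributions proves the first estimate.

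The $L^1$ estimate then follows from the first one just as in Lemma~\ref{lem:comparison}: set $\omega_0(r):=\omega(r)^{p/q_1}+r^{\gamma}$, apply Proposition~\ref{prop000}(3) with $\phi=\bphi$ and $\kappa=\omega_0(r)^{1/2}$, integrate over $B_r$, and use Lemma~\ref{lem:gradientestimates}(1)--(2) for the $[\bphi(|Du|)+\bphi(|D\bu|)]$ term and the first part of this lemma for the Hessian term to get $\fint_{B_r}\bphi(|Du-D\bu|)\,dx\lesssim\omega_0(r)^{1/2}(\bphi(\fint_{B_{2r}}|Du|\,dx)+1)$; Jensen's inequality, \dec{q_1} of $\bphi$ and strict monotonicity of $\bphi$ then give $\fint_{B_r}|Du-D\bu|\,dx\lesssim\omega_0(r)^{1/(2q_1)}(\fint_{B_{2r}}|Du|\,dx+1)$, and $\omega_0(r)^{1/(2q_1)}\lesssim\omega(r)^{p/(2q_1^2)}+r^{\gamma/(2q_1)}$.

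I expect essentially all the genuinely new work, relative to Lemma~\ref{lem:comparison}, to be concentrated in the first step: replacing the equation‑level comparison by the energy‑level one, verifying the Taylor/ellipticity inequality for $\bF$, and checking that $D_\xi\bF(D\bu)\cdot(Du-D\bu)$ has zero integral against the test function $u-\bu$. After that the estimates parallel the earlier proof; the one technical point to watch is that $D\bu$, unlike $Du$, carries no pointwise bound on its ``bad'' region $\{\phi^-_{B_r}(|D\bu|)>2^{-p}|B_r|^{-1+\epsilon}\}$, which is exactly why the higher‑integrability statement Lemma~\ref{lem:gradientestimates}(2) for the comparison gradient is needed in the estimate of $\fint_{B_r}|F(x,D\bu)-\bF(D\bu)|\,dx$.
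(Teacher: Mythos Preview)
Your proposal is correct and follows essentially the same approach as the paper: reduce to an energy difference via the Taylor-type lower bound for $\bF$ and the weak form of \eqref{eqA0}, drop the middle term by minimality of $u$, then estimate $|\bF-F(x,\cdot)|$ at $Du$ and $D\bu$ by the same $E_1/E_2$ decomposition as in Lemma~\ref{lem:comparison}, with \eqref{eq:VA1functional} playing the role of \eqref{wva1aphi}. The only cosmetic difference is that the paper treats the $D\bu$ term in detail and declares the $Du$ term analogous, while you do the reverse.
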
 

\begin{proof} 
By monotonicity \eqref{monotonicity} and \inc{p-1} and \dec{q_1-1} of $\bphi'$, 
\[\begin{split}
\bF(\xi_1)-\bF(\xi_2) -D_\xi \bF(\xi_2) \cdot (\xi_1-\xi_2) 
&= \int_0^{1}( D_\xi \bF(t\xi_1+(1-t)\xi_2) - D_\xi \bF(\xi_2) )\cdot (\xi_1-\xi_2)\,dt \\
&\gtrsim \int_0^{1} t\frac{\bphi'(|t\xi_1+(1-t)\xi_2|+|\xi_2|)}{|t\xi_1+(1-t)\xi_2| +|\xi_2|}|\xi_1-\xi_2|^2\,dt \\
&\gtrsim \int_{3/4}^{1} t\frac{\bphi'(|t\xi_1+(1-t)\xi_2|+|\xi_2|)}{|\xi_1|+|\xi_2|}|\xi_1-\xi_2|^2\,dt\\
&\gtrsim \left( \int_{3/4}^{1} t\,dt\right) \frac{\bphi'(\frac{3}{4}|\xi_1|-\frac{1}{4}|\xi_2|+|\xi_2|)}{|\xi_1|+|\xi_2|}|\xi_1-\xi_2|^2 \\
&\approx \frac{\bphi'(|\xi_1|+|\xi_2|)}{|\xi_1|+|\xi_2|}|\xi_1-\xi_2|^2
\end{split}
\]
for every $\xi_1,\xi_2\in\Rn$.
Using this and the facts that $u-\bu\in  W^{1,\phi}_0(B_r) \subset W^{1,\bphi}_0(B_r)$ from Lemma~\ref{lem:gradientestimates}, we find that 
\[
\begin{aligned}
&\fint_{B_r}\frac{\bphi'(|Du|+|D\bu|)}{|Du|+|D\bu|}|Du-D\bu|^2\,dx \\
&\quad\lesssim 
\fint_{B_r}[\bF(Du)- \bF(D\bu)]\,dx - 
\underbrace{\fint_{B_r} D_\xi \bF(D\bu)\cdot(Du-D\bu)\,dx}_{=0 \text{ by \eqref{eqA0}}}\\
&\quad =
\fint_{B_r}[\bF(Du)- F(x,Du)+F(x,D\bu)- F(|D\bu|)]\,dx + 
\underbrace{\fint_{B_r}[F(x,Du)- F(x,D\bu)]\, dx}_{\le 0\text{ by \eqref{mainfunctional}}}\\
&\quad\leq 
\fint_{B_r}|\bF(Du)- F(x,Du)|\,dx+ 
\fint_{B_r}|F(x,D\bu)- \bF(D\bu)|\,dx.
\end{aligned}
\] 
We shall estimate only the second integral; the estimate for the first is analogous, we merely 
swap the roles of $D\bu$ and $Du$. We 
%denote by $K$ the constant from $F\approx \phi$ \eqref{phifequiv} and 
split $B_r$ into 
\[
\begin{aligned}
%E_1&:=B_r \cap\{\phi^-_{B_r}(|D\bu|)\leq 2^{q_1}K\omega(r)\},\\
E_1:=B_r \cap\left\{ \phi^-_{B_r}(|D\bu|)\leq \frac{|B_{r}|^{-1+\epsilon}}{2^p}\right\}
\quad\text{and}\quad
E_2:=B_r \cap\left\{\frac{|B_{r}|^{-1+\epsilon}}{2^p }<\phi^-_{B_r}(|D\bu|)\right\}.
\end{aligned}
\]
%In the set $E_1$, since $F\approx \phi$ and $\bF\approx \bphi$ and $\phi$ and $\bphi$ 
%satisfy \azero, \inc{p} and \dec{q_1}, 
%\[
%\fint_{B_r}|F(x,D\bu)- \bF(D\bu)| \chi_{E_1}\,dx
%\lesssim 
%\omega(r)^\frac p{q_1}. 
%\]
In the set $E_1$, $|D\bu|\leq \frac{t_2}{2}$. If also $2t_1 \le |D\bu|$, then $\bF(\xi)=F(x_0,\xi)$ and \eqref{eq:VA1functional} imply that
\[
|F(x,D\bu)- \bF(D\bu)| = |F(x,D\bu)- F(x_0,D\bu)|
\lesssim \omega(r) (\phi^-_{B_r}(|D\bu|)+1). 
\]
When $|D\bu|< 2t_1$, we use $F\approx \phi$, $\bF\approx \bphi$ and 
that $\phi$ and $\bphi$ satisfy \azero, \inc{p} and \dec{q_1}, 
to conclude that $|F(x,D\bu)- \bF(D\bu)|\lesssim \omega(r)^\frac p{q_1}$. 
Therefore, applying Lemma~\ref{lem:gradientestimates}(1)\&(2), we have 
\[
\fint_{B_r}|F(|x,D\bu|)- \bF(D\bu)| \chi_{E_1}\,dx 
\lesssim 
\omega(r)^\frac p{q_1} \left(\bphi\left(\fint_{B_{2r}}|Du|\,dx\right)+1\right).
\]

In the set $E_2$, Proposition~\ref{prop:phi0}(3), $1\lesssim |B_{2r}|^{1-\epsilon}\phi^-_{B_r}(|D\bu|)$ and $F\approx \phi$ imply that 
\[
|F(x,D\bu)- \bF(D\bu)|
\lesssim
\phi(x,|D\bu|) 
\lesssim
r^{\frac{n(1-\epsilon)\sigma}{2}} \phi(x,|D\bu|)^{1+\frac{\sigma}{2}}. 
\]
Integrating this inequality over $E_2$ and using the definition of $\epsilon$ from \eqref{epsilon}, we find that 
\[
\fint_{B_r}|F(x,D\bu)- \bF(D\bu)| \chi_{E_2}\,dx
\lesssim
r^{\frac{n(4+\sigma)\sigma}{4(2+\sigma)}} 
\left(\fint_{B_r}\phi(x,|D\bu|)^{1+\frac{\sigma}{2}}\,dx\right)^{\frac{\sigma}{2+\sigma}+\frac{2}{2+\sigma}}.
\]
In the same way as in the proof of Lemma~\ref{lem:comparison}, we derive from this that
\[
\fint_{B_r}|F(x,D\bu)- F(D\bu)| \chi_{E_2}\,dx
 \lesssim 
r^{\frac{n\sigma^2}{4(2+\sigma)}}
\left(\bphi\left(\fint_{B_{2r}}|Du|\,dx\right)+1\right).
\]
Adding the estimates in $E_1$ and $E_2$, we obtain the inequality
\[
\fint_{B_r}\frac{\bphi'(|Du|+|D\bu|)}{|Du|+|D\bu|}|Du-D\bu|^2\,dx
\lesssim
\left(\omega(r)^\frac{p}{q_1} + r^{\gamma} \right)
\left(\bphi\left(\fint_{B_{2r}}|Du|\,dx\right)+1\right)
\]
with $\gamma:=\frac{n\sigma^2}{4(2+\sigma)}$.
This is the first claim of the lemma. The second claim follows from the first one 
by the same argument as in the proof of Lemma~\ref{lem:comparison}.
\end{proof}

\begin{proof}[Proof of Theorem~\ref{thm:functional}]
The proof is exactly same as that of Theorem~\ref{thm:PDE}, with Lemma~\ref{lem:comparison} replaced by Lemma~\ref{lem:comparisonfunctional}.
\end{proof}

%%%%%%%%%%%%%%%%%%%%%%%%%%%%%%%%%%%%%%%%%%%%%%%%%%%%%%%%%%%%%%%%%%%%%%%%
%%%%%%%%%%%%%%%%%%%%%%%%%%%%%%%%%%%%%%%%%%%%%%%%%%%%%%%%%%%%%%%%%%%%%%%%
%%%%%%%%%%%%%%%%%%%%%%%%%%%%%%%%%%%%%%%%%%%%%%%%%%%%%%%%%%%%%%%%%%%%%%%%

\section{Examples} \label{sect:example}

We present two known, important nonstandard growth problems, and show that they are special cases of Theorem~\ref{thm:PDE} or~\ref{thm:functional}. Various other examples of  growth functions together with references about regularity results for related equations and minimizing problems can be found in \cite{HasO22}. 

The first example is the equation \eqref{mainPDE} with $p(x)$-growth, for which we refer to \cite{Fan1} (see also \cite{AM1}). 
The second example is  the functional \eqref{mainfunctional} with  the double phase condition, for which we refer to \cite{BarCM18} (see also \cite{ColM15-1,ColM15-2}). For brevity, we consider one example in PDE form and the other as a minimizer, although obviously both problems could be considered in either form.  In Example~\ref{example:variable}, $p$ is a function whereas the lower growth exponent is denoted by $p^-$.

\begin{example}[Variable exponent growth]\label{example:variable} Let $p:\Omega\to [p^-,p^+]$ for some $1<p^-\le p^+$, and let $\omega_p$ be the modulus of continuity of $p$ and satisfy
\[
\lim_{r\to0} \omega_p(r)\ln\left(\tfrac{1}{r}\right)=0.
\] 
Suppose $A:\Omega\times \Rn\to\Rn$ satisfies that $A(x,\cdot)\in C^1(\Rn\setminus\{0\},\R^n)$ and
\[
\begin{cases}
|A(x,\xi)|+ |\xi| |D_\xi A(x,\xi)|  \le L |\xi|^{p(x)-1},\\
|\xi|^{p(x)-2} |\tilde \xi|^2 \le L D_\xi A(x,\xi) \tilde \xi \cdot \tilde \xi , \\
|A(x,\xi)-A(y,\xi)| \le L \omega_p(|x-y|)(|\xi|^{p(x)-1}+|\xi|^{p(y)-1})\big(1+\big|\ln|\xi|\,\big|\big).
\end{cases} 
\]
For instance, $A(x,\xi)=|\xi|^{p(x)-2}\xi$ satisfies these conditions.

We will show that $A$ satisfies the assumptions of Theorem~\ref{thm:PDE}.
Since the first two conditions above imply 
\[
|\xi|^{p(x)-2} \approx |D_\xi A(x,\xi)| \approx |\xi|^{-1}|A(x,\xi)| \quad \text{and} \quad   |D_\xi A(x,\xi)|\lesssim  D_\xi A(x,\xi) e  \cdot e, 
\]
Assumption~\ref{ass:A} is satisfied. 
We next derive \wVA{} for $A^{(-1)}$. Let $r\in(0,1)$ and $x,y\in B_r$. 
%The third and first conditions 
The third condition and $|\xi|^{p(x)-2} \approx  |\xi|^{-1}|A(x,\xi)|$
imply that
\[
|\xi|\,|A(x,\xi)-A(y,\xi)|  
\lesssim
\omega_p(r)\big(1+\big|\ln|\xi|\big|\big) |\xi|\,|A(y,\xi)|
\]
for all $\xi\in \Rn$. Moreover, if $1\le |\xi|\,|A(y,\xi)|\le |B_r|^{-1}$, then 
%$|B_1|r^{-n}\ge L^{-1}|\xi|^{p^-}$ 
$|\xi|^{p^-}\le C r^{-n}$ for some $C\ge 1$ so that
\[
1+\ln |\xi| \le  1+ \tfrac1{p^-}\ln 
C
+ \tfrac{n}{p^-}\ln\left(\tfrac{1}{r}\right) \lesssim \ln\left(\tfrac{1}{r}\right).
\]
On the other hand, if $0\le |\xi| |A(y,\xi)|\le 1$, then $|\xi| |A(y,\xi)| \lesssim |\xi|^{p^-}$ hence
\[
\big(1+\big|\ln|\xi|\big|\big)\, |\xi|\,|A(y,\xi)|
\lesssim 
\big(1+\big|\ln|\xi|\big|\big)\, |\xi|^{p^-} \lesssim 1.
\]
Therefore we have 
\[
|\xi|\,|A(x,\xi)-A(y,\xi)| \lesssim \omega_p(r)\ln\left(\tfrac{1}{r}\right) (|\xi|\,|A(y,\xi)|+1), \quad |\xi|\,|A(y,\xi)|\in(0,|B_r|^{-1}].
\]
This implies $A^{(-1)}$ satisfies \wVA{} with $\omega_\epsilon(r)= \omega_p(r)\ln\left(\tfrac{1}{r}\right)$ for all $\epsilon\in(0,1]$. 

We note that if $\omega_p(r)\lesssim r^\beta$ for some $\beta>0$, then $\omega_\epsilon(r)\lesssim r^{\beta_\epsilon}$ for any $\beta_\epsilon \in(0,\beta)$.
\end{example}

\begin{example}[Double phase growth] 
Let $H(x,t)=t^p+a(x)t^q$ with $a:\Omega\to [0,a^+]$ satisfy that for some $\beta\in(0,1]$,
\[
1\le \frac{q}{p}\le 1+\frac{\beta}{n} 
%,
%\quad
%0\le a(\cdot)\le L
\quad \text{and}\quad 
|a(x)-a(y)|\le L|x-y|^\beta.
\]
Suppose $F:\Omega\times \Rn\to\R$ satisfies $F(x,\cdot)\in C^1(\Rn)\cap C^2(\Rn\setminus\{0\})$,
\[
\begin{cases}
|\xi | \, |D_\xi F(x,\xi)| + |\xi|^2 \, |D_\xi^2 F(x,\xi)| \le L H(x,|\xi|),
\\
|\xi|^{-2}H(x,|\xi|) |\tilde \xi|^2 \le L D_\xi^2F(x,\xi) \tilde \xi \cdot \tilde \xi ,
\\
 |F(x,\xi)-F(y,\xi)| \le L \tilde\omega(|x-y|)(H(x,|\xi|)+H(y,|\xi|))+ L |a(x)-a(y)|\,|\xi|^q.
\end{cases}
\]
For instance, $F(x,\xi)=\gamma(x) H(x,\xi_1^4+\cdots+\xi_n^4)$ with $\tilde\omega$ the modulus of continuity of $\gamma$  satisfies these conditions if $0<\gamma^- \le \gamma \le \gamma^+$.

We will show that $F$ satisfies the assumptions of Theorem~\ref{thm:functional}.
Assumption~\ref{ass:F} is obvious from the first two conditions above. 
We next show \wVA{}. Let $r\in(0,1)$ and $x,y\in B_r$. The first condition implies that
\[
|F(x,\xi)-F(y,\xi)| \lesssim \tilde\omega(2r)(|H(x,\xi)|+|H(y,\xi)|)+  r^\beta |\xi|^q
\]
for all $\xi\in \Rn$. From the first and second conditions, we conclude that 
$|H(x,\xi)|  \lesssim F(x,\xi)$ and $|H(y,\xi)|  \lesssim F(y,\xi)$.
If $F(y,\xi)\le |B_r|^{-1+\epsilon}$, 
then $|\xi|^p\lesssim r^{-(1-\epsilon)n}$. Since $\beta \ge \frac{(q-p)n}{p}$, we have
\[
r^\beta |\xi|^{q}\le  r^\beta|\xi|^{q-p}H(y,|\xi|) \lesssim r^{\frac{\epsilon(q-p)n}{p}} F(y,|\xi|)
\]
Therefore we have 
\[
|F(x,\xi)-F(y,\xi)| \lesssim
\left(\tilde\omega(2r)+r^{\frac{\epsilon(q-p)n}{p}}\right)(|F(x,\xi)| + |F(y,\xi)|).
\]
From this estimate we obtain $|F(x,\xi)| \lesssim |F(y,\xi)|$, and this 
we have \wVA{} with $\omega_\epsilon(r)= \tilde \omega(2r)+r^{\frac{\epsilon(q-p)n}{p}}$. 

We note that if $\tilde\omega(r) \lesssim r^\beta$ for some $\beta>0$, then $\omega_\epsilon (r)\lesssim r^{\min\{\beta, \frac{\epsilon(q-p)n}{p}\}}$.
\end{example}

%%%%%%%%%%%%%%%%%%%%%%%%%%%%%%%%%%%%%%%%%%%%%%%%%%%%%%%%%%%%%%%%%%%%%%%%
%%%%%%%%%%%%%%%%%%%%%%%%%%%%%%%%%%%%%%%%%%%%%%%%%%%%%%%%%%%%%%%%%%%%%%%%
%%%%%%%%%%%%%%%%%%%%%%%%%%%%%%%%%%%%%%%%%%%%%%%%%%%%%%%%%%%%%%%%%%%%%%%%

\section*{Acknowledgment and data statement}

We thank the referee for comments. Peter Hästö was supported in part by the 
Jenny and Antti Wihuri Foundation and Jihoon Ok was supported by the 
National Research Foundation of Korea by the Korean Government (NRF-2022R1C1C1004523).

Data sharing is not applicable to this article as obviously no datasets were generated or analyzed during the current study.

%%%%%%%%%%%%%%%%%%%%%%%%%%%%%%%%%%%%%%%%%%%%%%%%%%%%%%%%%%%%%%%%%%%%%%%%
%%%%%%%%%%%%%%%%%%%%%%%%%%%%%%%%%%%%%%%%%%%%%%%%%%%%%%%%%%%%%%%%%%%%%%%%
%%%%%%%%%%%%%%%%%%%%%%%%%%%%%%%%%%%%%%%%%%%%%%%%%%%%%%%%%%%%%%%%%%%%%%%%

\bibliographystyle{amsplain}

\end{document}